\documentclass[10pt, a4paper, reqno]{amsart}
\usepackage{extra/style}

\begin{document}
    
    \begin{abstract}
    Drawing on the theory of Minimal Model Program singularities for foliations, we define relative canonical and log-canonical singularities for algebraic stacks with finite generic stabilisers.
    We show that if a point has log-canonical singularities, its stabiliser group is a finite extension of an algebraic torus, thus, \'etale locally, the good moduli space exists.
    If the singularity is canonical, we further show that the locus of stable points is non-empty.
\end{abstract}

\maketitle

\tableofcontents
    \section*{Introduction}
\label{sec:introduction}

There is an intimate relation between Birational Geometry and the construction of quotient spaces.
In both endeavours, one strives to find a distinguished \emph{quotient space} in a fixed birational equivalence class.
In this paper, we further investigate this relation by studying how the singularities of the Minimal Model Program affect the local construction of quotient spaces of algebraic stacks.

\begin{mainthm}
    Let $\mathcal{X}$ be a normal algebraic stack of finite type over an algebraically closed field $\mathbbm{k}$ of characteristic zero with affine diagonal and finite generic stabilisers.
    Let $x \in \mathcal{X}(\mathbbm{k})$ be a point. Then,
    \begin{enumerate}
        \item if $\mathcal{X}$ has relative log-canonical singularities at $x$, the stabiliser group of $x$ is a finite extension of an algebraic torus and a good moduli space exists étale locally on $\mathcal{X}$, and
        \item if $\mathcal{X}$ has relative canonical singularities at $x$, the locus of stable points with respect to the local good moduli space is non-empty.
    \end{enumerate}
\end{mainthm}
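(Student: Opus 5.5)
Locally at $x$, I will replace $\mathcal{X}$ by a quotient presentation and reduce to foliation singularity theory on an atlas. Since $\mathcal{X}$ has affine diagonal and $x$ is a closed point, the first step is to pass to the identity component and a torus. Let $G_x$ be the stabiliser, $G_x^0$ its identity component, and let $R_u$ be its unipotent radical.

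The plan for (1) is to prove that $G_x^0$ is a torus by contradiction. I will choose a smooth atlas $U \to \mathcal{X}$ near $x$. On $U$, the relative tangent of $U\to\mathcal{X}$ gives a foliation $\mathcal{F}$, namely the orbit foliation of the groupoid. As I read the definition earlier in the paper, relative (log-)canonical singularities of $\mathcal{X}$ at $x$ means $\mathcal{F}$ is (log-)canonical at points over $x$. The key local fact I will use is the McQuillan/Cascini--Spicer principle: a log-canonical foliation has no nontrivial nilpotent/unipotent infinitesimal symmetries at a point. In other words, the linear part of vector fields tangent to a log-canonical foliation at a singular point must be semisimple. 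The reason is that a nilpotent linear part yields a divisor extracted by a weighted blowup with foliation discrepancy $<-\epsilon(E)$.

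Concretely, suppose $G_x^0$ is not reductive or not a torus. In the first case there is a $\mathbb{G}_a\subset G_x$. In the second case there is a non-torus reductive group, so again there is a $\mathbb{G}_a\subset G_x$ inside an $\mathrm{SL}_2$. Slicing, I will work on the Luna-type local model where $\mathfrak{g}_x$ acts on $T_x U$. A generator of $\mathrm{Lie}(\mathbb{G}_a)$ gives a vector field $\partial$ tangent to $\mathcal{F}$, vanishing at the point, with nilpotent linear part. I will then exhibit a toric/weighted blowup whose exceptional divisor $E$ is invariant and has discrepancy $a(E,\mathcal{F})<0$. This contradicts the log-canonical condition, so $G_x^0$ is a torus. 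Since $G_x$ is then linearly reductive in characteristic zero, the Alper--Hall--Rydh local structure theorem gives an étale neighbourhood $[\operatorname{Spec}A/G_x]\to\mathcal{X}$. That neighbourhood admits the good moduli space $\operatorname{Spec}A^{G_x}$, which proves the second part of (1).

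For (2), I will work on $W=\operatorname{Spec}A$ with the action of $T=G_x^0$, a torus up to a finite group. Stable points exist iff some orbit is closed with finite stabiliser, iff the weights of $T$ on $W$ span a cone containing $0$ in its interior of full rank. Because generic stabilisers are finite, the weights span the full rank. The remaining issue is strict convexity. If the weight cone is not the whole space, there is a one-parameter subgroup $\lambda$ with all weights $\geq0$. The induced vector field is then an Euler-type field with nonnegative eigenvalues and $x$ in the closure of the generic $\lambda$-orbits. Blowing up along the $\lambda$-weighted filtration, I will compute $a(E,\mathcal{F})=0$ where canonicity forces $a(E,\mathcal{F})\ge\epsilon(E)=1$ for a non-invariant $E$. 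Equivalently, I will show that $E$ is transverse to the foliation and has discrepancy $\le 0$. This contradicts canonicity, so the cone is everything and the stable locus is non-empty.

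The main obstacle is the discrepancy computations: the nilpotent case in (1) and the attracting-$\lambda$ case in (2). I will first try to reduce both to the linear action on $T_xU$ through a deformation to the normal cone, and then use toric weighted blowups, where discrepancies are explicit.
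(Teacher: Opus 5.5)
\textbf{Gap in part (1).} Your proof of (1) rests entirely on the ``principle'' that a log-canonical foliation contains no section whose linear part at the point is nilpotent. You plan to establish it by exhibiting a weighted blow-up with negative log-discrepancy, but that blow-up is never constructed. The principle is only known in rank one: it is the McQuillan--Panazzolo theorem quoted in the introduction, and the paper presents its higher-rank generalisation as a goal, not as an available tool. Your stronger wording, ``the linear part must be semisimple'', is false even in rank one: $(x+y)\partial_x+y\partial_y$ has non-nilpotent, non-semisimple linear part and is log-canonical.

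In rank $r=\dim G_x$, a weighted blow-up chosen so that the nilpotent field $\partial$ has positive weighted degree does not control the discrepancy of $\mathscr{F}$. That discrepancy is read off from the weighted order of $\xi_1\wedge\cdots\wedge\xi_r$ relative to the saturation on the blow-up, together with the invariance of $E$. The other generators of $\mathfrak{g}_x$ can have negative weighted degree and compensate, or the initial form of the wedge can cancel.

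Moreover, when $G_x$ is not reductive, which is exactly the case you must exclude, there is no Luna or Alper--Hall--Rydh slice. The linear part of $\partial$ may even be zero. Nothing guarantees that foliation discrepancies survive degeneration to the normal representation, so the ``deformation to the normal cone'' step is not a reduction you can take for granted. As it stands, all the content of (1) sits in a computation that is neither carried out nor known in this generality.

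\textbf{What the paper does instead, and a remark on part (2).} The paper never computes discrepancies at $x$; it uses algebraic integrability globally.
\begin{enumerate}
\item It builds a rational map $\mathcal{X}\dashrightarrow B$, resolves its indeterminacy, and applies functorial semistable reduction compatibly with the groupoid. This gives an alteration $\pi_X:\mathbf{Z}^{\mathrm{ss}}\to X$ and a semistable $g^{\mathrm{ss}}$ with $\pi_X^*\mathscr{F}\subseteq\mathscr{T}_{g^{\mathrm{ss}}}$.
\item Log-canonicity, unlike canonicity, survives alterations. A log-canonical foliation inside a log-regular one of the same rank must equal it, so $\pi_X^*\mathscr{F}=\mathscr{T}_{g^{\mathrm{ss}}}$ near $\pi^{-1}(x)$.
\item A subgroup $\mathbb{G}_a\subseteq G_x$ acting on the proper fibre $\pi^{-1}(x)$ has a fixed point $v$. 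At $v$ the normal representation of $\mathscr{T}_{g^{\mathrm{ss}}}$ is faithful and semisimple, which contradicts the nonzero nilpotent element coming from $\mathrm{Lie}(\mathbb{G}_a)$. Faithfulness at $v$ is precisely what disposes of the case where $\mathbb{G}_a$ acts with zero linear part.
\end{enumerate}

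Your part (2), by contrast, is a genuinely different and essentially workable route once (1) is known. On the chart $W=\mathrm{Spec}\,A$, use the weight monoid of $A$, not the weights of a linear representation. If its cone is not the whole space, there is a cocharacter $\lambda$ with non-negative weights. The exceptional divisor $E$ of the $\lambda$-weighted blow-up of $W$ is fixed pointwise by $\lambda$ and has centre through the fixed point. Hence $\xi_\lambda$, which is part of the basis of $\mathscr{F}=\mathfrak{g}_x\otimes\mathscr{O}_W$, is divisible by a uniformiser of $E$, and $a_E\le-1$. No deformation is needed. Note that canonical means $a_E\ge 0$, i.e. log-discrepancy at least $\epsilon(E)$, which is what you seem to intend. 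This argument replaces the paper's analysis of the exceptional divisors of $g$, but it still depends on (1).
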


We say that an algebraic stack $\mathcal{X}$ has \emph{relative (log-)canonical singularities} if there exists a smooth presentation $X \rightarrow \mathcal{X}$ such that the relative tangent sheaf $\mathscr{T}_{X/\mathcal{X}}$ has (log-)canonical singularities as a foliation on $X$.
Similarly, we say $\mathcal{X}$ is \emph{relatively regular} if $\mathscr{T}_{X/\mathcal{X}}$ is a regular foliation on $X$.
Since singularities of the Minimal Model Program are insensitive to smooth morphisms, these notions are independent of the choice of presentation (see Definition \ref{def:relative_singularities}). \par

When $\mathcal{X}$ admits a good moduli space $q : \mathcal{X} \rightarrow Q$, we say that $x \in \mathcal{X}$ is \emph{stable} if $q^{-1}(q(x)) = \{x\}$ (see \cite[Definition 2.5]{MR4255045}). \par

Recall that, étale locally on $X$, a coarse moduli space exists when the stabiliser is finite (\cite[Theorem 1.1]{MR1432041}), and a good moduli space exists when the stabiliser is reductive (\cite[Theorem 1.1]{MR4088350}).
Thus our Main Theorem yields the following.

\begin{table}[h]
    \centering
    \begin{tabularx}{\textwidth}{CCC}
        \toprule
        \textbf{Type of Singularity} & \textbf{Stabiliser Group} & \textbf{Local Moduli Space} \\
        \midrule
        Regular & Finite & Coarse \\
        \addlinespace[0.5em]
        Canonical & Toric & Stable \\
        \addlinespace[0.5em]
        Log-Canonical & Toric & Good \\
        \bottomrule
    \end{tabularx}
\end{table}

We deduce two corollaries.

\begin{corollaryA}
    Let $G$ be an affine algebraic group acting with finite generic stabilisers on a normal scheme $X$ of finite type over an algebraically closed field $\mathbbm{k}$ of characteristic zero.
    Let $x \in X$ be a closed point and suppose that the foliation induced by $G$ is log-canonical at $x$.
    Then the stabiliser group of $x$ is a finite extension of an algebraic torus.
\end{corollaryA}

\begin{corollaryB}
    Let $X \rightarrow \mathcal{X}$ be a smooth presentation of a normal algebraic stack of finite type over an algebraically closed field $\mathbbm{k}$ of characteristic zero with affine diagonal and finite generic stabilisers.
    Let $x \in X$ be a closed point and suppose that the induced foliation $\mathscr{F} := \mathscr{T}_{X/\mathcal{X}}$ is canonical at $x$.
    Then there exists an étale neighbourhood $U \rightarrow X$ of $x$ and a morphism $U \rightarrow Q$ to an affine scheme $Q$ of finite type over $\mathbbm{k}$, such that $\mathscr{F}|_U = \mathscr{T}_{U/Q}$ over the generic point of $U$.
\end{corollaryB}

\subsection*{Motivation}

Our main interest is algebraic foliations.
Notably, we would like to generalise the following (\cite[Fact I.ii.4]{MR3128985}) to higher rank locally free foliations.

\begin{theorem*}
    [McQuillan--Panazzolo]
    Let $X$ be a Noetherian normal scheme over $\mathbb{C}$ and let $x \in X$ be a closed point.
    Suppose that $\partial$ is a local derivation of $X$ such that $x$ is $\partial$-invariant.
    Then $\partial$ is log-canonical at $x$ if and only if the induced endomorphism of the tangent space $T_{X, x}$ is non-nilpotent.
\end{theorem*}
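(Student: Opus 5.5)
The plan is to translate log-canonicity into a statement about how $\partial$ acts on divisorial valuations centred at $x$, and then prove both implications with that dictionary.

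\textbf{Step 1 (valuative criterion).} Let $\pi\colon Y\to X$ be a birational model, $E\subset Y$ a prime divisor with centre $x$, and $v=\operatorname{ord}_E$. Locally along $E$ we can write $\pi^*\partial = h\,\partial_Y$, where $\partial_Y$ is a saturated local generator of the pulled-back foliation. The foliated discrepancy is then $a(E)=-\operatorname{ord}_E(h)$.

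\begin{itemize}
\item If $E$ is $\partial_Y$-invariant, then $\epsilon(E)=0$. In this case one checks
\[
\operatorname{ord}_E(h)=\min_{f}\bigl(v(\partial f)-v(f)\bigr),
\]
with $f$ ranging over local functions with $v(f)>0$. So $a(E)\ge 0$ if and only if there is some $f$ with $v(\partial f)\le v(f)$.
\item If $E$ is not invariant, then $\epsilon(E)=1$. Here there is a function $f$ with $v(\partial f)=v(f)-1$, and the bound $a(E)\ge -1$ should follow by the same computation. I will verify this, since it is where the normalisation of $\epsilon$ enters.
\end{itemize}

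Hence $\partial$ fails to be log-canonical at $x$ exactly when some divisorial valuation $v$ centred at $x$ satisfies $v(\partial f)>v(f)$ for every $f$ in the maximal ideal $\mathfrak m_x$. Call such a $v$ \emph{$\partial$-contracting}.

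\textbf{Step 2 (nilpotent implies not log-canonical).} Suppose the linear part $\partial_1\colon \mathfrak m/\mathfrak m^2\to \mathfrak m/\mathfrak m^2$ is nilpotent. Choose a flag basis $x_1,\dots,x_n$ with $\partial x_i\in \operatorname{span}(x_{i+1},\dots,x_n)+\mathfrak m^2$. Since $X$ may be singular, work on the completion of a local embedding into a smooth germ and lift $\partial$ there. Take the monomial valuation with weights $1<w_1<w_2<\dots<w_n<2$. Linear terms of $\partial x_i$ then have strictly larger weight than $x_i$, and quadratic terms have weight at least $2>w_i$. So $v(\partial x_i)>v(x_i)$. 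By the Leibniz rule this extends to all $f$, because $v$ is monomial and the initial forms behave well. Restricting $v$ to $X$, or passing to a divisorial valuation with the same property, gives a $\partial$-contracting divisor, hence an invariant $E$ with $a(E)<0$.

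\textbf{Step 3 (not log-canonical implies nilpotent).} Let $v$ be $\partial$-contracting, with valuation ideals $\mathfrak a_t=\{f: v(f)\ge t\}$. Since the value group of a divisorial valuation is discrete, there is $\delta>0$ with $\partial(\mathfrak a_t)\subset\mathfrak a_{t+\delta}$ for every $t$. Iterating gives
\[
\partial^k(\mathfrak m)\subset \mathfrak a_{k\delta}.
\]
By Izumi's theorem, $v$ is comparable with the $\mathfrak m$-adic order, so $\mathfrak a_s\subset \mathfrak m^{\lceil s/c\rceil}$ for some constant $c$. Taking $k$ large, $\partial^k(\mathfrak m)\subset \mathfrak m^2$, so $\partial_1^k=0$ on $\mathfrak m/\mathfrak m^2$.

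\textbf{Main obstacle.} I expect the delicate point to be Step 1: deriving the precise discrepancy formula, including the non-invariant case and the case where $X$ is only normal, so that the contracting condition exactly characterises failure of log-canonicity. Izumi's inequality requires analytic irreducibility, which I would justify from the normality of $X$, or else bypass by working on the normalised completion.
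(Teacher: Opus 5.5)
The paper does not prove this statement. It quotes it from McQuillan--Panazzolo as motivation only, so your argument has to stand on its own.

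Your strategy is the natural one: a ``contracting valuation'' criterion, a weighted blow-up when the linear part is nilpotent, and iteration plus Izumi for the converse. Step 1, which you flagged, is actually unproblematic. Write $\partial=h\,\partial_Y$. If $E$ is invariant, then $v(\partial_Y f)\ge v(f)$ for all $f$. If $E$ is not invariant, then $v(\partial_Y f)=v(f)-1$ whenever $v(f)>0$. So the log-discrepancy is $-\mathrm{ord}_E(h)$, resp.\ $1-\mathrm{ord}_E(h)$.

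For the converse, $v(\partial f)\ge v(f)+1$ on $\mathscr{O}_{X,x}$ propagates to all of $K(X)$ by the quotient rule. Evaluating on a unit $u$ with $\partial_Y u$ a unit (resp.\ on a uniformiser) gives $\mathrm{ord}_E(h)\ge 1$ (resp.\ $\ge 2$). No bound $a(E)\ge -1$ is involved. Also, contracting divisors need not be invariant: take $x(x\partial_x+y\partial_y)$ and the blow-up of the origin.

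With $X$ smooth at $x$ and only valuations centred at $x$ considered, your proof is essentially complete. There are, however, two genuine gaps.

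\textbf{Gap 1: Step 2 fails when $X$ is singular at $x$.} A monomial valuation $w$ on the ambient ring $\mathbb{C}[[x_1,\dots,x_n]]$ does not restrict to $\hat{\mathscr{O}}_{X,x}=\mathbb{C}[[x_1,\dots,x_n]]/I$. It only induces the filtration by the images of $\{w\ge t\}$. The order function $\bar w(f)=\max_{g\in I}w(f+g)$ satisfies merely $\bar w(fg)\ge\bar w(f)+\bar w(g)$, because the initial ideal of $I$ need not be prime. So you do not yet have any valuation of $K(X)$, and ``passing to a divisorial valuation with the same property'' is exactly the missing step.

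One way to supply it:
\begin{itemize}
\item Lift the flag basis to $x_1,\dots,x_n\in\mathscr{O}_{X,x}$ and take integer weights $a_1<\dots<a_n<2a_1$.
\item Let $\mathcal{F}_t\subseteq\mathscr{O}_{X,x}$ be generated by the monomials of weight $\ge t$, with $\mathcal{F}_t=\mathscr{O}_{X,x}$ for $t\le 0$. Your Leibniz computation gives $\partial(\mathcal{F}_t)\subseteq\mathcal{F}_{t+1}$.
\item Then $D(fT^t)=\partial(f)T^{t+1}$ is a derivation of the extended Rees algebra $A=\bigoplus_{t\in\mathbb{Z}}\mathcal{F}_tT^t$ that kills $T^{-1}$.
\item By Seidenberg's theorems, $D$ preserves the normalisation $\bar A$ and the minimal primes $P$ of $T^{-1}\bar A$, hence the DVRs $\bar A_P$.
\item The restrictions $\nu$ of $\mathrm{ord}_P$ to $K(X)$ are divisorial valuations centred at $x$ (the Rees valuations of $\mathcal{F}$). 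They satisfy $\nu(\partial f)\ge\nu(f)+\nu(T^{-1})$ with $\nu(T^{-1})>0$, so they are contracting.
\end{itemize}
When $X$ is smooth at $x$ this is unnecessary: with algebraic parameters and rational weights, the monomial valuation is itself divisorial.

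\textbf{Gap 2: valuations with non-closed centre.} Definition~\ref{def:singularities} quantifies over all divisorial valuations whose centre \emph{contains} $x$, not only those centred at $x$. Step 1 silently restricts to the latter. So Step 3 does not exclude a contracting $v$ centred at a non-closed point $y$ specialising to $x$, and this is needed for ``non-nilpotent $\Rightarrow$ log-canonical''.

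Your argument does extend. Let $\mathfrak{p}_y\subset\mathscr{O}_{X,x}$ be the prime of $y$.
\begin{itemize}
\item First, $\partial(\mathscr{O}_{X,x})\subseteq\mathfrak{p}_y$. Otherwise, after adding a constant, some $g\notin\mathfrak{p}_y$ has $\partial g\notin\mathfrak{p}_y$. Then for $0\neq f\in\mathfrak{p}_y$ one gets $v(\partial(fg))=v(fg)$, contradicting contraction.
\item Hence $\partial^k(\mathfrak{m}_x)\subseteq\mathfrak{a}_k$.
\item Izumi in $\mathscr{O}_{X,y}$ gives $\mathfrak{a}_k\subseteq\mathfrak{p}_y^{(\lceil k/c\rceil)}$.
\item Chevalley's lemma in $\hat{\mathscr{O}}_{X,x}$ gives $\mathfrak{p}_y^{(N)}\subseteq\mathfrak{m}_x^2$ for $N\gg 0$.
\end{itemize}

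Both this extension and your own appeal to Izumi require $X$ to be excellent (e.g.\ of finite type), not merely normal. A normal Noetherian local ring can fail to be analytically irreducible.
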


In higher rank, in order to formulate a generalisation, we have to consider the stabiliser Lie algebras of the foliation and their normal representations (see \S \ref{subsec:normal_representations}).
If we then assume that $\mathscr{F}$ is induced by a smooth presentation of an algebraic stack, i.e. $\mathscr{F}$ is algebraically integrable in a strong sense, then part (1) of our Main Theorem immediately implies that $\mathscr{F}$ is étale locally induced by an algebraic torus action, thus, in particular, it is non-nilpotent.
Corollary A is an instance where we can apply our result. \par

Another significant motivating conjecture is \cite[Conjecture 6.4.2]{MR4844626}, which, from our perspective, states that canonical foliations admit a stable quotient space.

\begin{conjecture*}
    [Cascini--Spicer]
    Let $X$ be a klt $\mathbb{Q}$-factorial projective variety over $\mathbb{C}$ and let $\mathscr{F}$ be an algebraically integrable foliation on $X$ with canonical singularities.
    Then there exists a projective morphism $f : X \rightarrow Y$ to a projective variety $Y$ such that $\mathscr{F} = \mathscr{T}_f$ over the generic point of $X$.
\end{conjecture*}

If we again assume that $\mathscr{F}$ is induced by a smooth presentation of an algebraic stack, then Corollary B states that the conjecture is true étale locally on $X$.
More precisely, the foliation is étale locally induced by a stable good moduli space.
We may hope, perhaps under further global assumptions, to be able to glue these local moduli spaces, as in \cite{MR4088350}, in order to obtain a global algebraic space quotient. \par

We also want to mention that, in \cite[Theorem 2.1.9]{chen}, the authors provide an answer to the Cascini--Spicer conjecture when the foliated pair $(X, \mathscr{F})$ is \emph{F-dlt}.
Our methods are different.
In \emph{loc. cit.}, the authors resolve an indeterminacy locus (as we also do in \S \ref{subsec:indeterminacy_locus}) and then run a Minimal Model Program for foliations in order to construct the morphism $f$.
On the other hand, we show that a local good moduli space exists and prove it is of maximal dimension.

\subsection*{Key Examples}

We always keep two important examples in mind illustrating the different behaviour of canonical and log-canonical singularities. Let $X = \mathbb{A}^2$.

\noindent{
\begin{minipage}{0.48\textwidth}
    Let $\mathbb{G}_m$ act on $X$ with weights $(1, 1)$.
\end{minipage}
\hfill
\begin{minipage}{0.48\textwidth}
    Let $\mathbb{G}_m$ act on $X$ with weights $(1, -1)$.
\end{minipage}

\vspace{-0.5em}
\centering{The induced foliation $\mathscr{F}$ at a point $x_0 \in X$ is the tangent space of the orbit of $x_0$.
One can see that $\mathscr{F}$ is globally generated by the vector field\\[-0.25em]}
\begin{minipage}{0.48\textwidth}
    \begin{align*}
        \partial := x \partial_x + y \partial_y.
    \end{align*}
    \begin{center}
        \begin{tikzpicture}[scale=0.8, transform shape]
            \draw[thick, ->] (-3,0) -- (3,0) node[anchor=south, xshift=-10pt] {$x$};
            \draw[thick, ->] (0,-2) -- (0,2) node[anchor=west, yshift=-10pt] {$y$};
            \foreach \x in {-2.5,-2,-1.5,-1,-0.5,0.5,1,1.5,2,2.5}
                \foreach \y in {-1.5,-1,-0.5,0,0.5,1,1.5}
                    \draw[->] (\x,\y) -- (1.2*\x,1.2*\y);
            \foreach \y in {-1.5,-1,-0.5,0.5,1,1.5}
                \draw[->] (0,\y) -- (0,1.2*\y);
            \foreach \t in {-45,-30,-15,0,15,30,45}
                \draw[gray] ({-2*tan(\t)}, -2) -- ({2*tan(\t)}, 2);
            \foreach \t in {-30,-15,0,15,30}
                \draw[gray] (-3, {-3*tan(\t)}) -- (3, {3*tan(\t)});
        \end{tikzpicture}
    \end{center}
    The origin is a log-canonical singularity of $\mathscr{F}$ and all other points are regular. \par
    The generic orbit is not closed. \par
    $\mathbb{G}_m$ acts on $U := X \setminus \{0\}$ and the quotient stack $[U/\mathbb{G}_m]$ is $\mathbb{P}^1$. \par
    We obtain a rational map
    \begin{align*}
        f : X \supseteq U &\rightarrow \mathbb{P}^1 \\
        (x, y) &\dashedrightarrow (x : y).
    \end{align*}
\end{minipage}
\hfill
\begin{minipage}{0.48\textwidth}
    \begin{align*}
        \partial := x \partial_x - y \partial_y.
    \end{align*}
    \begin{center}
        \begin{tikzpicture}[scale=0.8, transform shape]
            \draw[thick, ->] (-3,0) -- (3,0) node[anchor=south, xshift=-10pt] {$x$};
            \draw[thick, ->] (0,-2) -- (0,2) node[anchor=west, yshift=-10pt] {$y$};
                \foreach \x in {-2.5,-2,-1.5,-1,-0.5,0.5,1,1.5,2,2.5}
                    \foreach \y in {-1.5,-1,-0.5,0,0.5,1,1.5}
                        \draw[->] (\x,\y) -- (1.2*\x,0.8*\y);
                \foreach \y in {-1.5,-1,-0.5,0.5,1,1.5}
                    \draw[->] (0,\y) -- (0,0.8*\y);
                \foreach \k in {-4.5,-3.5,-2.5,-1.5,-1,-0.5,-0.2}
                    \draw[gray] (0,0) plot[domain=-3:(\k/2), samples = 300] (\x,\k/\x);
                \foreach \k in {-4.5,-3.5,-2.5,-1.5,-1,-0.5,-0.2}
                    \draw[gray] (0,0) plot[domain=-3:(\k/2), samples = 300] (\x,-\k/\x);
                \foreach \k in {4.5,3.5,2.5,1.5,1,0.5,0.2}
                    \draw[gray] (0,0) plot[domain=(\k/2):3, samples = 300] (\x,\k/\x);
                \foreach \k in {4.5,3.5,2.5,1.5,1,0.5,0.2}
                    \draw[gray] (0,0) plot[domain=(\k/2):3, samples = 300] (\x,-\k/\x);
        \end{tikzpicture}
    \end{center}
    The origin is a canonical singularity of $\mathscr{F}$ and all other points are regular. \par
    The generic orbit is closed. \par
    $\mathbb{G}_m$ acts on $U := X \setminus \{xy=0\}$ and the quotient stack $[U/\mathbb{G}_m]$ is $\mathbb{A}^1 \setminus \{0\}$. \par
    We obtain a rational map
    \begin{align*}
        f : X \supseteq U &\rightarrow \mathbb{A}^1\setminus \{0\} \subseteq \mathbb{P}^1 \\
        (x, y) &\dashedrightarrow (x : 1/y).
    \end{align*}
\end{minipage}

\vspace{-0.5em}
\centering{We can resolve the indeterminacy locus of $f$ by blowing up the origin $\pi : Z \rightarrow X$.
We get a morphism $g : (Z, \Delta) \rightarrow \mathbb{P}^1$, where $\Delta \subseteq Z$ is the exceptional divisor of $\pi$.
Since the origin is a fixed point, the $\mathbb{G}_m$-action lifts to $Z$ and $g$ is $\mathbb{G}_m$-invariant.\\[0.5em]}

\begin{minipage}{0.48\textwidth}
    Note that $g(\Delta) = \mathbb{P}^1$. \par
    This shows that $g$ cannot descend to a $\mathbb{G}_m$-invariant morphism $f : X \rightarrow \mathbb{P}^1$. \par
    The good moduli space of the stack $\mathcal{X} := [X / \mathbb{G}_m]$ is $\spec \mathbbm{k}$, however $\mathcal{X}$ does not contain any stable point.
\end{minipage}
\hfill
\begin{minipage}{0.48\textwidth}
    Note that $g(\Delta)$ is a point. \par
    This shows that $g$ descends to a $\mathbb{G}_m$-invariant morphism $f : X \rightarrow \mathbb{P}^1$. \par
    The good moduli space of the stack $\mathcal{X} := [X / \mathbb{G}_m]$ is $\mathbb{A}^1$, and $\mathcal{X}$ contains an open dense subset of stable points.
\end{minipage}}

\subsection*{Main Ideas}

The main ingredient is semistable reduction.
Variants of this tool, such as \cite[Theorem 0.3]{MR1738451}, have already been successfully employed in the study of algebraically integrable foliations in \cite{ambro}.
We use \emph{functorial semistable reduction} (\cite[Theorem 1.2.17]{abramovich2020}) and proceed in several steps. \par

Let $X \rightarrow \mathcal{X}$ be a minimal presentation around a closed point $x$ and let $\mathscr{F}$ be the induced locally free foliation on $X$.

\begin{enumerate}[itemsep=1em, label=\textnormal{(\Roman*)}]
    \item We first observe that an algebraic stack $\mathcal{X}$ with finite generic stabilisers is generically a smooth quotient stack by a finite group.
    As a result, there exists a rational map $f : \mathcal{X} \dashedrightarrow B$ to a smooth projective scheme $B$ (\S \ref{subsec:rational_map}).
    
    \item We can then functorially resolve the indeterminacy locus of $f$ in order to obtain a modification $\sigma : \mathcal{Z} \rightarrow \mathcal{X}$ represented by $\sigma_X : Z \rightarrow X$, and a morphism $g : \mathcal{Z} \rightarrow B$.
    We deduce that $\sigma_X^* \mathscr{F}$ is a foliation on $Z$ (\S \ref{subsec:indeterminacy_locus}).
    
    \item We now use functorial semistable reduction to find an alteration $\tau_Z : Z^{\mathrm{ss}} \rightarrow Z$ and a semistable morphism $g^{\mathrm{ss}} : Z^{\mathrm{ss}} \rightarrow B^{\mathrm{ss}}$ such that $\mathscr{F}^{\mathrm{ss}} := \pi_X^* \mathscr{F}$ is a foliation on $Z^{\mathrm{ss}}$, where $\pi_X = \sigma_X \circ \tau_Z$ (\S \ref{subsec:semistable_reduction}).
    
    \item By construction $\mathscr{F}^{\mathrm{ss}}$ is a subsheaf of the logarithmic tangent sheaf $\mathscr{T}_{g^{\mathrm{ss}}}$ of $g^{\mathrm{ss}}$.
    We show that, when $\mathscr{F}$ is log-canonical, $\mathscr{F}^{\mathrm{ss}}$ is log-canonical (Lemma \ref{lem:log_canonical_alteration}).
    It follows that $\mathscr{F}^{\mathrm{ss}} = \mathscr{T}_{g^{\mathrm{ss}}}$, else we could extract a divisor with negative log-discrepancy (\S \ref{subsec:formal_representability}).
    
    \item Since $g^{\mathrm{ss}}$ is semistable, it induces local torus actions.
    The differentials of these actions are captured by the \emph{normal representations} of $\mathscr{T}_{g^{\mathrm{ss}}}$, which are faithful and semisimple representations of Abelian Lie algebras (\S \ref{subsec:semistable_morphisms}).
    
    \item Now suppose $G$ is the stabiliser group of $x$.
    Then $G$ acts on a proper space $V := \pi^{-1}(x) \subseteq Z^{\mathrm{ss}}$.
    Suppose it has a fixed point $v \in V$, then, infinitesimally around $v$, $G$ acts as $\mathscr{F}^{\mathrm{ss}}$.
    But $\mathscr{F}^{\mathrm{ss}} = \mathscr{T}_{g^{\mathrm{ss}}}$ (Step (IV)), and $\mathscr{T}_{g^{\mathrm{ss}}}$ acts as a torus (Step (V)).
    We deduce that $G$ must be a torus (\S \ref{subsec:main_log_canonical}).

    \item If $\mathscr{F}$ is canonical at $x$, we show that the image of the non-regular locus of $\sigma_X^* \mathscr{F}$ on $Z$ via the morphism $g$ is not dense (Proposition \ref{prop:canonical_invariant_valuation}).
    Using properties of good moduli spaces, this is enough to conclude (\S \ref{subsec:main_canonical}).
\end{enumerate}

We encounter some technical difficulties which are worth pointint out.

\begin{enumerate}[label=\textnormal{(\roman*)}]
    \item We need to keep track of the locus where $\mathscr{F}$ is not regular.
    We do so by adding a boundary divisor and using logarithmic geometry.
    \item We work with foliations which are not necessarily saturated in $\mathscr{T}_X$.
    We take ample space to thoroughly discuss foliations in this setting (\S \ref{sec:foliations}).
    \item The functorial semistable reduction only produces a Deligne--Mumford stack $Z^{\mathrm{ss}}$.
    For this reason, we must work with foliations on such spaces (\S \ref{subsec:foliations_dm_stacks}).
    \item If we merely required $g^{\mathrm{ss}}$ to be logarithmically smooth, we could take $\tau_Z$ to be a modification (\cite[Theorem 1.2.12]{abramovich2020}).
    However, we really do need semistability to ensure that $\mathscr{T}_{g^{\mathrm{ss}}}$ acts faithfully.
    \item The action of $G$ on $V$ need not have a fixed point.
    For this reason, we work with additive subgroups of $G$, which always have a fixed point (\S \ref{subsec:auxiliary}).
    \item There is no foundational material on formal stacks.
    As a result, our treatment may at times be overly laborious, especially in \S \ref{subsec:normal_representations}, \S \ref{subsec:relative_singularities} and \S \ref{subsec:formal_representability}.
\end{enumerate}

\subsection*{Notation and Assumptions}

We state the assumptions for each statement at the beginning of the corresponding subsection (\S \ref{sec:foliations} and \S \ref{sec:stacks}) or section (\S \ref{sec:resolution} and \S \ref{sec:main_theorem}).
We use standard letters $X, Y, \ldots$ for schemes and algebraic spaces, bold letters $\mathbf{X}, \mathbf{Y}, \ldots$ for Deligne--Mumford stacks, and calligraphic letters $\mathcal{X}, \mathcal{Y}, \ldots$ for algebraic stacks.
Every logarithmic scheme or Deligne--Mumford stack considered will always be fine and saturated.
Image and preimage will always be scheme-theoretic unless otherwise stated.
We use $X = Y$ to denote a natural isomorphism and $X \cong Y$ to denote a choice of isomorphism.
The relative sheaf of differentials and the relative tangent sheaf of a space $X$ over a field $\mathbbm{k}$ will simply be denoted by $\Omega_X^1$ and $\mathscr{T}_X$ respectively.
Given a coherent sheaf $\mathscr{F}$ on $X$ and a point $x \in X$, we let $\mathscr{F}_x$ and $\mathscr{F}|_x$ denote the localisation at $x$ and the fibre over $x$ respectively.
We use $\mathscr{F}^{\vee}$ and $\mathscr{F}^{[1]}$ for the dual and double dual respectively.
A point is not assumed to be closed. \par

We will use a few facts without further mention.
\begin{itemize}
    \item A locally Noetherian normal scheme is a disjoint union of integral schemes (\cite[\href{https://stacks.math.columbia.edu/tag/0357}{Lemma 0357}]{stacks-project}).
    \item The dual of a coherent sheaf on a normal scheme is reflexive (\cite[Corollary 1.2]{MR597077}), and a morphism of reflexive sheaves on a normal scheme is an isomorphism if and only if its localisation at every codimension one point is an isomorphism (\cite[Proposition 1.6]{MR597077}).
    \item The intersection of a discrete valuation ring with a subfield $K$ of its field of fractions is either $K$ or a discrete valuation ring.
\end{itemize}

\subsection*{Acknowledgements}

I am grateful to Caucher Birkar, Paolo Cascini, Michael Temkin and Jaroslaw Włodarczyk for discussions and clarifications about semistable reduction, and to Davide Gori, David Rydh and Mattia Talpo for answering questions related to algebraic stacks.
I would like to sincerely thank Fabio Bernasconi, Riccardo Carini, Paolo Cascini, Jorge Vitorio Pereira, Calum Spicer and Richard Thomas for helpful advice and encouragement.
I acknowledge funding from Tsinghua University where this research work was carried out.

    \section{Singularities of Foliations on Schemes}
\label{sec:foliations}

We start by defining foliations on schemes (\S \ref{subsec:foliations_schemes}) and logarithmic schemes (\S \ref{subsec:foliations_logarithmic}).
We then work towards defining singularities of foliations (\S \ref{subsec:discrepancies} and \S \ref{subsec:singularities}) by first studying foliations on discrete valuation rings (\S \ref{subsec:foliations_dvr}).
After describing the behaviour of singularities under smooth morphisms (\S \ref{subsec:ascent_descent}) and alterations (\S \ref{subsec:modifications_alterations}), we define the normal representations of stabiliser Lie algebras (\S \ref{subsec:normal_representations}) and compute them in the case of semistable morphisms (\S \ref{subsec:semistable_morphisms}).

\subsection{Foliations on Schemes}
\label{subsec:foliations_schemes}

We give basic definitions following \cite[\S \ref*{reg-sec:foliations}]{bongiorno3}. \par

In this subsection, $X$ is a locally Noetherian normal scheme over a field $\mathbbm{k}$ of characteristic zero.

\begin{definition}
    \label{def:foliation_scheme}
    A \emph{foliation} on $X$ is a coherent subsheaf $\mathscr{F}$ of the tangent sheaf $\mathscr{T}_X$ which is involutive, i.e. closed under the Lie bracket.
    A foliation is \emph{locally free} if $\mathscr{F}$ is a locally free sheaf.
    The \emph{rank} of $\mathscr{F}$ at a point $x \in X$ is the dimension of $\mathscr{F}|_x$.
    It is denoted by $\mathrm{rk}_x \, \mathscr{F}$.
    When $x$ is a generic point of $X$, we simply write $\mathrm{rk} \, \mathscr{F}$.
    A \emph{morphism of foliations} is a morphism of subsheaves of $\mathscr{T}_X$.
\end{definition}

Note that we do not require, a priori, the foliation to be saturated inside $\mathscr{T}_X$.

\begin{remark}
    \label{rem:foliation_is_morphism}
    Let $\mathscr{F}$ be a locally free foliation on $X$.
    By \cite[Proposition \ref*{reg-prop:foliation_groupoid}]{bongiorno3}, there exists a formally smooth groupoid $R$ on $X$ of formal finite presentation, whose associated foliation is $\mathscr{F}$.
    As a result, we may construct the associated infinitesimal stack
    \begin{align}
        \label{eq:formal_stack_foliation}
        X \rightarrow \mathcal{X} := [X/R],
    \end{align}
    and we may describe the locally free foliation $\mathscr{F}$ as the relative tangent sheaf of the formally smooth morphism in (\ref{eq:formal_stack_foliation}).
    In this context, an algebraically integrable foliation can be thought as the relative tangent sheaf of a morphism to an algebraisable formal stack $\mathcal{X}$.
    In this article, we want to resolve the singularities of this morphism in order to obtain a \emph{semistable} morphism.
    Similar techniques are used in \cite{abramovich2025}.
\end{remark}

\begin{definition}
    \label{def:invariant_subscheme}
    Let $\mathscr{F}$ be a locally free foliation on $X$ and let $V \subseteq X$ be a locally closed subscheme with ideal sheaf $\mathscr{I}_V$.
    Then $V$ is \emph{invariant} with respect to $\mathscr{F}$ if for all local derivations $\partial$ of $\mathscr{F}$, $\partial(\mathscr{I}_V) \subseteq \mathscr{I}_V$.
\end{definition}

We invite the reader to consult \cite[Lemma \ref*{reg-lem:invariance_equivalence}]{bongiorno3} for a stack-theoretic characterisation of invariance: a subscheme is invariant if and only if it descends to a closed substack on the associated formal stack.

\subsection{Foliations on Logarithmic Schemes}
\label{subsec:foliations_logarithmic}

We generalise the previous subsection to the setting of logarithmic schemes $(X, \Delta_X)$.
Setting $\Delta_X = \emptyset$ recovers the standard case. \par

In this subsection, $X$ and $Y$ are locally Noetherian normal schemes over a field $\mathbbm{k}$ of characteristic zero, and $\Delta_X \subseteq X$ and $\Delta_Y \subseteq Y$ are reduced Cartier divisors on $X$ and $Y$ respectively.
We let $f : (X, \Delta_X) \rightarrow (Y, \Delta_Y)$ be a logarithmic morphism over $\mathbbm{k}$.
We assume $x \in X$ is a point and set $y := f(x) \in Y$.

\begin{definition}
    \label{def:log_tangent_sheaf}
    The \emph{relative logarithmic tangent sheaf} of $f$ is defined as the dual of the sheaf of relative logarithmic differentials
    \begin{align}
        \label{eq:log_tangent_sheaf}
        \mathscr{T}_{X/Y}(-\mathrm{log}\,\Delta_X/\Delta_Y) = \left( \Omega_{X/Y}^1(\mathrm{log}\,\Delta_X/\Delta_Y) \right)^{\vee}.
    \end{align}
    Since it is closed under the Lie bracket, it is a foliation on $X$.
    For ease of notation, these objects will be denoted by $\mathscr{T}_f$ and $\Omega_f^1$ respectively.
\end{definition}

\begin{remark}
    \label{rem:log_tangent_sheaf}
    It is a standard fact that a local derivation $\partial$ of $\mathscr{T}_X$ is in $\mathscr{T}_f$ if and only if it is a derivation on $X$ relative to $Y$ under which $\Delta_X$ is invariant.
\end{remark}

\begin{definition}
    \label{def:log_foliation}
    A \emph{foliation} on $(X ,\Delta_X)$ is a coherent subsheaf $\mathscr{F}$ of the logarithmic tangent sheaf $\mathscr{T}_{X}(-\mathrm{log}\,\Delta_X)$ which is involutive.
    The \emph{log-normal sheaf} of $\mathscr{F}$ is
    \begin{align}
        \label{eq:normal_sheaf_foliation}
        \mathscr{N}_{\mathscr{F}} := \mathrm{coker} \left( \mathscr{F} \rightarrow \mathscr{T}_X \left( - \mathrm{log}\,\Delta_X \right) \right).
    \end{align}
    The foliation $\mathscr{F}$ is \emph{log-saturated} at $x$ if its log-normal sheaf is torsion-free at $x$.
    When $x \notin \Delta_X$, we simply say $\mathscr{F}$ is \emph{saturated} at $x$.
\end{definition}

\begin{definition}
    \label{def:log_regular}
    Let $\mathscr{F}$ be a foliation on $(X, \Delta_X)$.
    Then $\mathscr{F}$ is \emph{log-regular} at $x$ if $\mathscr{F}$ is locally free at $x$ and
    \begin{align}
        \label{eq:regular_locus_morphism}
        \Omega_X^1(\mathrm{log}\,\Delta_X) \rightarrow \mathscr{F}^{\vee}
    \end{align}
    is surjective at $x$.
    The \emph{log-regular locus} of $\mathscr{F}$ is the open subset of log-regular points of $X$.
    When $x \notin \Delta_X$, we simply say $x$ is a \emph{regular} point.
\end{definition}

\begin{lemma}
    \label{lem:log_regular_log_saturated}
    Let $\mathscr{F}$ be a foliation on $(X, \Delta_X)$.
    If $\mathscr{F}$ is log-regular at $x$, it is log-saturated at $x$.
\end{lemma}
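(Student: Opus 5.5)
The idea is to show that log-regularity at $x$ forces the inclusion $\mathscr{F} \rightarrow \mathscr{T}_X(-\mathrm{log}\,\Delta_X)$ to be, locally at $x$, the inclusion of a direct summand. The log-normal sheaf is then locally free at $x$, hence torsion-free.

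First I reduce to the local ring $\mathcal{O}_{X,x}$ and set $\Omega := \Omega_X^1(\mathrm{log}\,\Delta_X)_x$ and $F := \mathscr{F}_x$. By hypothesis $F$ is free of some rank $r$, so $F^{\vee}$ is free of rank $r$. The map $\Omega \rightarrow F^{\vee}$ in (\ref{eq:regular_locus_morphism}) is surjective at $x$, hence surjective on stalks by Nakayama. Since $F^{\vee}$ is free, the surjection splits: $\Omega \cong F^{\vee} \oplus K$ for some finitely generated module $K$.

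Next I dualise. Using $\mathscr{T}_X(-\mathrm{log}\,\Delta_X) = \Omega^{\vee}$, I get $\Omega^{\vee} \cong F^{\vee\vee} \oplus K^{\vee} = F \oplus K^{\vee}$, the last equality because $F$ is free. I then check that under this splitting the original inclusion $F \hookrightarrow \Omega^{\vee}$ is the inclusion of the first summand. The map (\ref{eq:regular_locus_morphism}) is by definition the transpose of the inclusion $\mathscr{F} \subseteq \mathscr{T}_X(-\mathrm{log}\,\Delta_X) = \Omega^{\vee}$, composed with the canonical map $\Omega \rightarrow \Omega^{\vee\vee}$. Dualising it therefore recovers the inclusion $F \rightarrow \Omega^{\vee\vee\vee} = \Omega^{\vee}$, and this composite agrees with the original one because dual modules satisfy $M^{\vee} = M^{\vee\vee\vee}$ canonically.

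It follows that the log-normal sheaf $\mathscr{N}_{\mathscr{F},x} \cong K^{\vee}$. A dual module over the normal local domain $\mathcal{O}_{X,x}$ is torsion-free, even reflexive, as recalled in the Notation section. So $\mathscr{F}$ is log-saturated at $x$.

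The only delicate point is the bookkeeping in the second step: identifying the dual of the surjection with the given inclusion when $\Omega$ itself need not be reflexive. I will handle it by writing everything in terms of $\Omega^{\vee}$ and using the canonical identity $M^{\vee} = M^{\vee\vee\vee}$.
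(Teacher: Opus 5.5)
Your proposal is correct and follows essentially the same route as the paper: the paper dualises $0 \rightarrow \mathscr{K} \rightarrow \Omega_X^1(\mathrm{log}\,\Delta_X) \rightarrow \mathscr{F}^{\vee} \rightarrow 0$ and reads off $\mathscr{N}_{\mathscr{F}}$ as a subsheaf of the torsion-free sheaf $\mathscr{K}^{\vee}$. Your splitting gives the slightly sharper $\mathscr{N}_{\mathscr{F},x} \cong K^{\vee}$, where only the embedding is needed, and your $M^{\vee} = M^{\vee\vee\vee}$ check makes explicit that the dualised map is the original inclusion, a step the paper leaves implicit.
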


\begin{proof}
    We work locally around $x$.
    If $\mathscr{F}$ is log-regular, we may write a short exact sequence
    \begin{align}
        \label{eq:regular_kernel}
        0 \rightarrow \mathscr{K} \rightarrow \Omega_X^1(\mathrm{log}\,\Delta_X) \rightarrow \mathscr{F}^{\vee} \rightarrow 0.
    \end{align}
    Taking the dual of (\ref{eq:regular_kernel}) yields an exact sequence
    \begin{align}
        \label{eq:regular_kernel_dual}
        0 \rightarrow \mathscr{F} \rightarrow \mathscr{T}_X(-\mathrm{log}\,\Delta_X) \rightarrow \mathscr{K}^{\vee},
    \end{align}
    from which it follows that $\mathscr{N}_{\mathscr{F}}$ is a subsheaf of a torsion free sheaf, hence it is torsion free.
\end{proof}

\begin{lemma}
    \label{lem:regular_foliations_isomorphic}
    Let $\mathscr{F} \subseteq \mathscr{F}^{\prime}$ be two foliations on $(X, \Delta_X)$.
    Assume that $\mathscr{F}$ and $\mathscr{F}^{\prime}$ have the same rank at the unique generic point generalising $x$.
    If $\mathscr{F}$ is log-regular at $x$, then $\mathscr{F} = \mathscr{F}^{\prime}$ locally around $x$.
\end{lemma}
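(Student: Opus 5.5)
We work locally around $x$ and reduce to comparing the two subsheaves of $\mathscr{T}_X(-\mathrm{log}\,\Delta_X)$ through the log-normal sheaf of $\mathscr{F}$. By Lemma \ref{lem:log_regular_log_saturated}, log-regularity of $\mathscr{F}$ at $x$ gives that $\mathscr{N}_{\mathscr{F}}$ is torsion free at $x$. Shrinking $X$ to an open neighbourhood of $x$, we may assume $X$ is integral with generic point $\eta$ (the unique generic point generalising $x$), that $\mathscr{F}$ is locally free, and that $\mathscr{N}_{\mathscr{F}}$ is torsion free on all of $X$; all three conditions are open, since $X$ is locally Noetherian and normal (hence locally integral) and coherent sheaves have open torsion-free loci.

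Next consider the quotient $\mathscr{F}^{\prime}/\mathscr{F}$. The inclusions $\mathscr{F} \subseteq \mathscr{F}^{\prime} \subseteq \mathscr{T}_X(-\mathrm{log}\,\Delta_X)$ induce an injection
\begin{align*}
    \mathscr{F}^{\prime}/\mathscr{F} \hookrightarrow \mathscr{T}_X(-\mathrm{log}\,\Delta_X)/\mathscr{F} = \mathscr{N}_{\mathscr{F}},
\end{align*}
so $\mathscr{F}^{\prime}/\mathscr{F}$ is torsion free, being a subsheaf of a torsion-free sheaf. On the other hand, its stalk at $\eta$ is $\mathscr{F}^{\prime}_\eta/\mathscr{F}_\eta$, a quotient of two $K(X)$-vector spaces. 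Since $\mathscr{F}_\eta \subseteq \mathscr{F}^{\prime}_\eta$ and the two have equal dimension by the rank hypothesis, this quotient vanishes. A torsion-free coherent sheaf on an integral scheme that vanishes at the generic point is zero, because every local section is then killed by a nonzero function. Hence $\mathscr{F}^{\prime} = \mathscr{F}$ on the neighbourhood.

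The one point needing care is the interpretation of the rank condition. The rank in Definition \ref{def:foliation_scheme} is the dimension of the fibre $\mathscr{F}|_\eta$, which at the generic point of an integral scheme equals $\dim_{K(X)} \mathscr{F}_\eta$, so the argument above applies. Otherwise the proof is routine, and I expect no real obstacle beyond keeping track of the shrinking of $X$ to an integral neighbourhood.
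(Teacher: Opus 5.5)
Your proof is correct and is essentially the paper's argument. The paper considers the surjection $\mathscr{N}_{\mathscr{F}} \rightarrow \mathscr{N}_{\mathscr{F}^{\prime}}$ of equal generic rank, whose source is torsion-free by Lemma \ref{lem:log_regular_log_saturated}, and the kernel of that surjection is exactly your $\mathscr{F}^{\prime}/\mathscr{F}$, so you have simply made the kernel explicit and added the routine bookkeeping (integral neighbourhood, rank at $\eta$) that the paper leaves implicit.
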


\begin{proof}
    Recall that, on a normal scheme, there exists a unique generic point generalising $x$.
    We work locally around $x$.
    Consider the induced morphism of log-normal sheaves $\mathscr{N}_{\mathscr{F}} \rightarrow \mathscr{N}_{\mathscr{F}^{\prime}}$.
    By assumption, this is a surjective morphism of sheaves of the same generic rank.
    By Lemma \ref{lem:log_regular_log_saturated}, $\mathscr{N}_{\mathscr{F}}$ is torsion-free, therefore $\mathscr{N}_{\mathscr{F}} = \mathscr{N}_{\mathscr{F}^{\prime}}$ and $\mathscr{F} = \mathscr{F}^{\prime}$.
\end{proof}

\begin{definition}
	\label{def:gorenstein}
	Let $\mathscr{F}$ be a foliation on $(X, \Delta_X)$.
	The \emph{canonical sheaf} of $\mathscr{F}$ is
	\begin{align}
		\omega_{\mathscr{F}} &= \left( \Lambda^{\mathrm{top}} \mathscr{F} \right)^{\vee}, \text{ and} \\
        \omega_{\mathscr{F}}^{[m]} &= \left( \omega_{\mathscr{F}}^{\otimes m} \right)^{[1]}
	\end{align}
    is its $m\textsuperscript{th}$ reflexive power for any $m \in \mathbb{N}$.
    Here, $\mathrm{top}$ is the rank of $\mathscr{F}$ at the unique generic point generalising $x$.
	The foliation is \emph{$\mathbb{Q}$-Gorenstein} at $x$ if $\omega_{\mathscr{F}}^{[m]}$ is invertible at $x$ for some $m \in \mathbb{N}$.
    The least such $m$ for which this holds is the \emph{Gorenstein index} of $\mathscr{F}$.
    When $m = 1$, we say that $\mathscr{F}$ is \emph{Gorenstein}.
\end{definition}

Note that a locally free foliation is Gorenstein.

\begin{definition}
    \label{def:pullback_foliation}
    Suppose that $f$ is flat, then we define the \emph{logarithmic pullback foliation} $\mathscr{F}$ on $(X, \Delta_X)$ as the limit of the diagram
    \begin{equation}
        \label{diag:pullback_foliation}
        \begin{tikzcd}
            \mathscr{F} \arrow[r, dashed] \arrow[d, dashed] & \mathscr{T}_X(-\mathrm{log}\,\Delta_X) \arrow[d] \\
            f^* \mathscr{G} \arrow[r] & f^* \mathscr{T}_Y(-\mathrm{log}\, \Delta_Y).
        \end{tikzcd}
    \end{equation}
\end{definition}

\begin{remark}
    \label{rem:pullback_foliation}
    We make a few remarks about Definition \ref{def:pullback_foliation},
    \begin{itemize}
        \item Flatness of $f$ ensures existence of the morphism between logarithmic tangent sheaves.
        \item Since $f^* \mathscr{G} \subseteq f^* \mathscr{T}_Y(-\mathrm{log}\, \Delta_Y)$ is injective, so is $\mathscr{F} \subseteq \mathscr{T}_X(-\mathrm{log}\, \Delta_X)$.
        \item Since $\mathscr{F}$ is a limit of coherent sheaves closed under the Lie bracket, we see that $\mathscr{F}$ inherits the same properties, thus it is a foliation on $(X, \Delta_X)$.
        \item If $f$ is formally étale, e.g. a localisation or an étale morphism, $\mathscr{F} = f^* \mathscr{G}$.
        \item If $f$ is a locally closed immersion of a $\mathscr{G}$-invariant logarithmic subscheme, $\mathscr{F} = f^* \mathscr{G}$.
        \item When $\Delta_X$ and $\Delta_Y$ are the empty log structures, this is the usual pullback foliation, however we do not saturate $\mathscr{F}$ inside $\mathscr{T}_X$.
    \end{itemize}
\end{remark}

\begin{lemma}
    \label{lem:log_pullback}
    Let $\mathscr{G}$ be a foliation on $Y$ and suppose that $f$ is flat.
    Let $\mathscr{F}$ denote the logarithmic pullback foliation of $\mathscr{G}$ on $X$.
    Then,
    \begin{enumerate}
        \item there exists an exact sequence
        \begin{align}
            \label{eq:relative_foliations_ses}
            0 \rightarrow \mathscr{T}_f \rightarrow \mathscr{F} \rightarrow f^* \mathscr{G}, \text{ and}
        \end{align}
        \item if $\mathscr{G}$ is log-saturated at $y$, so is $\mathscr{F}$ at $x$.
    \end{enumerate}
    If furthermore $f$ is logarithmically smooth at $x$, then
    \begin{enumerate}[resume]
        \item the exact sequence in (\ref{eq:relative_foliations_ses}) is short exact at $x$, and
        \item $\omega_{\mathscr{F}} = \omega_f \otimes f^* \omega_{\mathscr{G}}$ in a neighbourhood of $x$.
    \end{enumerate}
\end{lemma}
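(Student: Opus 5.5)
Throughout I work locally around $x$ and write $\mathscr{T}_X^{\log} := \mathscr{T}_X(-\mathrm{log}\,\Delta_X)$, $\mathscr{T}_Y^{\log} := \mathscr{T}_Y(-\mathrm{log}\,\Delta_Y)$, and $d : \mathscr{T}_X^{\log} \rightarrow f^*\mathscr{T}_Y^{\log}$ for the logarithmic differential of $f$. Flatness is used twice: it gives $d$ (Remark \ref{rem:pullback_foliation}), and it makes $f^*$ exact.

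\emph{Part (1).} By Definition \ref{def:log_tangent_sheaf}, $\mathscr{T}_f$ is the dual of $\Omega_f^1 = \mathrm{coker}(f^*\Omega_Y^1(\mathrm{log}\,\Delta_Y) \rightarrow \Omega_X^1(\mathrm{log}\,\Delta_X))$. Since dualising is left exact, $\mathscr{T}_f = \ker d$. By Definition \ref{def:pullback_foliation}, $\mathscr{F}$ is the fibre product $\mathscr{T}_X^{\log} \times_{f^*\mathscr{T}_Y^{\log}} f^*\mathscr{G}$. The projection $\mathscr{F} \rightarrow f^*\mathscr{G}$ therefore has kernel $\ker(d) \times_{f^*\mathscr{T}^{\log}_Y} 0 = \mathscr{T}_f$, which is exactly the sequence (\ref{eq:relative_foliations_ses}).

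\emph{Part (2).} The fibre product identifies $\mathscr{N}_{\mathscr{F}} = \mathscr{T}_X^{\log}/\mathscr{F}$ with the image of $\mathscr{T}_X^{\log}$ under the induced map to $f^*\mathscr{T}_Y^{\log}/f^*\mathscr{G}$. This map is injective: an element of $\mathscr{T}_X^{\log}$ mapping to zero lies in $f^*\mathscr{G}$ after applying $d$, hence lies in $\mathscr{F}$. Since $f^*$ is exact, the target equals $f^*\mathscr{N}_{\mathscr{G}}$. So $\mathscr{N}_{\mathscr{F}} \hookrightarrow f^*\mathscr{N}_{\mathscr{G}}$, and it suffices to show that $f^*\mathscr{N}_{\mathscr{G}}$ is torsion-free at $x$.
\begin{itemize}
\item By hypothesis $\mathscr{N}_{\mathscr{G}}$ is torsion-free at $y$, so it embeds locally into a free module $\mathscr{O}_Y^{\oplus r}$, for instance into its double dual after passing to a free cover.
\item Flat pullback keeps this inclusion injective, giving $f^*\mathscr{N}_{\mathscr{G}} \hookrightarrow \mathscr{O}_X^{\oplus r}$.
\item $\mathscr{O}_X^{\oplus r}$ is torsion-free because $X$ is integral locally.
\end{itemize}
The subtle point is the first bullet. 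A torsion-free coherent module on an integral Noetherian scheme embeds into a free module locally, so the argument goes through.

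\emph{Parts (3) and (4).} Assume now that $f$ is log smooth at $x$. Then the log cotangent sequence
\[
0 \rightarrow f^*\Omega_Y^1(\mathrm{log}\,\Delta_Y) \rightarrow \Omega_X^1(\mathrm{log}\,\Delta_X) \rightarrow \Omega_f^1 \rightarrow 0
\]
is a short exact sequence of locally free sheaves near $x$, and it splits locally. Dualising gives a surjection $d$, and surjectivity pulls back along the fibre product, so $\mathscr{F} \rightarrow f^*\mathscr{G}$ is surjective. This proves (3).

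For (4), the short exact sequence $0 \rightarrow \mathscr{T}_f \rightarrow \mathscr{F} \rightarrow f^*\mathscr{G} \rightarrow 0$ has $\mathscr{T}_f$ locally free. Taking determinants requires care, since $\mathscr{G}$ need not be locally free. I would argue in two steps.
\begin{itemize}
\item On the open set where $\mathscr{G}$ is locally free, which has complement of codimension $\geq 2$ in $Y$, the sequence gives $\Lambda^{\mathrm{top}}\mathscr{F} = \Lambda^{\mathrm{top}}\mathscr{T}_f \otimes f^*\Lambda^{\mathrm{top}}\mathscr{G}$. This holds at least at codimension-one points of $X$, using that flat morphisms preserve codimension of preimages.
\item Both sides of the claimed identity are reflexive: the left is a dual, and the right is an invertible sheaf tensored with the pullback of a reflexive sheaf, which is reflexive under flat pullback. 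A morphism of reflexive sheaves that is an isomorphism in codimension one is an isomorphism. Dualising the identity above then gives $\omega_{\mathscr{F}} = \omega_f \otimes f^*\omega_{\mathscr{G}}$.
\end{itemize}

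I expect the main obstacle to be this codimension and reflexivity bookkeeping in (4), in particular checking that the top exterior powers match at codimension-one points of $X$ lying over the non-locally-free locus of $\mathscr{G}$.
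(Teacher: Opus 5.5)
Your proposal is correct and follows the paper's proof essentially step by step: you read off the kernel of $\mathscr{F}\rightarrow f^*\mathscr{G}$ from the fibre product in (1), use the injection $\mathscr{N}_{\mathscr{F}}\hookrightarrow f^*\mathscr{N}_{\mathscr{G}}$ in (2), pull back surjectivity of the log differential along the fibre product in (3), and compare determinants in codimension one via reflexivity on normal schemes in (4), spelling that last step out in more detail than the paper. Two small remarks: in (3), $f^*\Omega^1_Y(\mathrm{log}\,\Delta_Y)$ and $\Omega^1_X(\mathrm{log}\,\Delta_X)$ need not be locally free since $X$ and $Y$ are only normal, but exactness together with local freeness of $\Omega^1_f$ already gives the local splitting you use; and the obstacle you anticipate in (4) does not arise, because your own going-down argument shows no codimension-one point of $X$ lies over the non-locally-free locus of $\mathscr{G}$.
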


We point out that the exact sequence in (\ref{eq:relative_foliations_ses}) is simply the dual of the exact sequence of sheaves of differentials associated to the morphisms $X \rightarrow Y \rightarrow [Y / \mathscr{G}]$.

\begin{proof}
    [Proof of Lemma \ref{lem:log_pullback}]
    We work locally around $x \in X$ and $y \in Y$.
    \begin{enumerate}[itemsep=1em]
        \item Flatness of $f$ yields existence of a morphism of short exact sequences
        \begin{equation}
            \label{diag:pullback_foliation_ses}
            \begin{tikzcd}
                0 \arrow[r] & \mathscr{F} \arrow[r] \arrow[d] & \mathscr{T}_X(-\mathrm{log}\,\Delta_X) \arrow[r] \arrow[d] & \mathscr{N}_{\mathscr{F}} \arrow[r] \arrow[d] & 0 \\
                0 \arrow[r] & f^*\mathscr{G} \arrow[r] & f^*\mathscr{T}_Y(-\mathrm{log}\,\Delta_Y) \arrow[r] & f^*\mathscr{N}_{\mathscr{G}} \arrow[r] & 0.
            \end{tikzcd}
        \end{equation}
        Now
        \begin{align}
            \label{eq:kernel_log_tangent}
            \mathscr{T}_f = \mathrm{ker} \, \Bigl( \mathscr{T}_X(-\mathrm{log}\,\Delta_X) \rightarrow f^*\mathscr{T}_Y(-\mathrm{log}\,\Delta_Y) \Bigr)
        \end{align}
        and we need to show it factors through $\mathscr{F} \subseteq \mathscr{T}_X(-\mathrm{log}\,\Delta_X)$.
        To this end, it suffices to show that $\mathscr{N}_{\mathscr{F}} \rightarrow f^*\mathscr{N}_{\mathscr{G}}$ is injective.
        Since $\mathscr{F}$ is defined as the limit of the left-most square, injectivity follows from a simple diagram chasing argument.

        \item By assumption, $\mathscr{N}_{\mathscr{G}}$ is torsion-free.
        Since $f$ is flat, $f^*\mathscr{N}_{\mathscr{G}}$ is torsion-free.
        But now, by part (1), $\mathscr{N}_{\mathscr{F}}$ is a subsheaf of $f^*\mathscr{N}_{\mathscr{G}}$, thus it is also torsion-free.

        \item Logarithmic smoothness of $f$ implies that the morphism
        \begin{align}
            \label{eq:log_tangent_map_surjective}
            \mathscr{T}_X(-\mathrm{log}\,\Delta_X) \rightarrow f^*\mathscr{T}_Y(-\mathrm{log}\,\Delta_Y)
        \end{align}
        is surjective.
        Using once more injectivity of $\mathscr{N}_{\mathscr{F}} \rightarrow f^*\mathscr{N}_{\mathscr{G}}$, we have that $\mathscr{F} \rightarrow f^*\mathscr{G}$ is surjective.

        \item Taking exterior powers of the short exact sequence (part (3)) in (\ref{eq:relative_foliations_ses}) and dualising yields the result, after using the fact that $\mathscr{T}_f$ is locally free, and $X$ and $Y$ are normal.
    \end{enumerate}
\end{proof}

\subsection{Foliations on Discrete Valuation Rings}
\label{subsec:foliations_dvr}

Discrepancies of foliations will be defined on divisorial valuations.
We start preparing by studying foliations on discrete valuation rings. \par

In this subsection, $D$ and $E$ are discrete valuation rings or fields over a field $\mathbbm{k}$ of characteristic zero.
We let $d$ and $e$ denote the unique closed points of $D$ and $E$ respectively.
We also let $K$ and $L$ denote the fields of fractions of $D$ and $E$ respectively.
We let $\Delta_D$ and $\Delta_E$ be either logarithmic structure on $\spec D$ and $\spec E$ respectively: the closed point or the empty set.
We fix a local inclusion of local rings $D \subseteq E$ and let $g : (\spec E, \Delta_E) \rightarrow (\spec D, \Delta_D)$ denote the induced morphism of spectra.
We assume $g$ is a logarithmic morphism of logarithmic schemes.

\begin{lemma}
    \label{lem:dvr_log_saturated}
    Let $\mathscr{F}$ be a foliation on $(E, \Delta_E)$.
    Then, $\mathscr{F}$ is log-regular if and only if it is log-saturated.
\end{lemma}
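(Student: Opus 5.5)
One implication is already available: by Lemma \ref{lem:log_regular_log_saturated}, log-regular implies log-saturated. So the plan is to prove the converse. Assume $\mathscr{F}$ is log-saturated on $(\spec E, \Delta_E)$, i.e. the log-normal sheaf $\mathscr{N}_{\mathscr{F}}$ is torsion-free. We must show that $\mathscr{F}$ is locally free and that $\Omega^1_E(\mathrm{log}\,\Delta_E) \rightarrow \mathscr{F}^{\vee}$ is surjective. If $E$ is a field, everything is a finite-dimensional vector space, so the subspace inclusion $\mathscr{F} \subseteq \mathscr{T}_E$ dualises to a surjection and there is nothing to prove. We may therefore assume $E$ is a discrete valuation ring.

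The key steps use the structure theory of modules over a DVR. First, $\mathscr{T}_E(-\mathrm{log}\,\Delta_E)$ is the dual of a module, hence reflexive, and in particular torsion-free. Since $\mathscr{F}$ is a submodule of it, $\mathscr{F}$ is torsion-free. Over a DVR (a PID), finitely generated torsion-free modules are free, so $\mathscr{F}$ is locally free, assuming finite generation. Second, $\mathscr{N}_{\mathscr{F}}$ is torsion-free by hypothesis, so it is free, hence projective. The sequence
\begin{align*}
0 \rightarrow \mathscr{F} \rightarrow \mathscr{T}_E(-\mathrm{log}\,\Delta_E) \rightarrow \mathscr{N}_{\mathscr{F}} \rightarrow 0
\end{align*}
therefore splits, and $\mathscr{F}$ is a direct summand of $\mathscr{T}_E(-\mathrm{log}\,\Delta_E)$. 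Dualising, $\mathscr{T}_E(-\mathrm{log}\,\Delta_E)^{\vee} \rightarrow \mathscr{F}^{\vee}$ is a split surjection.

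Finally, we need to compare $\Omega^1_E(\mathrm{log}\,\Delta_E)$ with its double dual, since the map in Definition \ref{def:log_regular} factors as
\begin{align*}
\Omega^1_E(\mathrm{log}\,\Delta_E) \rightarrow \Omega^1_E(\mathrm{log}\,\Delta_E)^{\vee\vee} \rightarrow \mathscr{F}^{\vee}.
\end{align*}
I expect this step to be the main obstacle. $E$ is only a discrete valuation ring over $\mathbbm{k}$, not necessarily essentially of finite type, so $\Omega^1_E$ need not be finitely generated, and even its torsion-free quotient could be large (think of $E$ with a huge transcendence degree over $\mathbbm{k}$). Finite generation of $\mathscr{F}$ itself is not a worry, since $\mathscr{F}$ is coherent by definition.

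To handle this, I would prove surjectivity onto $\mathscr{F}^{\vee}$ directly. Pick a basis $\partial_1, \dots, \partial_r$ of $\mathscr{F}$ that extends, via the splitting above, to part of a basis of the free module $\mathscr{T}_E(-\mathrm{log}\,\Delta_E)$ (free in the finitely generated case). By Nakayama, surjectivity onto $\mathscr{F}^{\vee}$ amounts to finding log-differentials $\omega_1, \dots, \omega_r$, i.e. elements $df$ or $dt/t$ with $t$ a uniformiser when $\Delta_E \neq \emptyset$, such that the matrix $(\omega_i(\partial_j))$ is invertible modulo the maximal ideal. Log-saturation says that the $\partial_j$ remain linearly independent in $\mathscr{T}_E(-\mathrm{log}\,\Delta_E) \otimes \kappa(e)$. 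I would then argue that any $E$-linear functional on $\mathscr{T}_E(-\mathrm{log}\,\Delta_E)$ is approximated modulo $\mathfrak{m}$ by combinations of evaluations on $df$ and $dt/t$. Since a derivation is determined by its values on functions, the pairing of the span of $\{df, dt/t\}$ with $\mathscr{T}_E(-\mathrm{log}\,\Delta_E) \otimes \kappa(e)$ is nondegenerate. This gives the required $\omega_i$, and I expect this nondegeneracy to be the only genuinely delicate point.
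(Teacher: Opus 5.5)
Your argument is correct, and its core is the paper's proof: over the discrete valuation ring $E$, torsion-freeness makes $\mathscr{F}$ and $\mathscr{N}_{\mathscr{F}}$ free, and dualising the log-normal sequence gives a surjection onto $\mathscr{F}^{\vee}$. The paper stops there. It tacitly identifies $\Omega^1_E(\mathrm{log}\,\Delta_E)$ with $\mathscr{T}_E(-\mathrm{log}\,\Delta_E)^{\vee}$, which is harmless when $E$ is essentially of finite type over $\mathbbm{k}$, since then $\Omega^1_E(\mathrm{log}\,\Delta_E)$ is free of finite rank. Your Nakayama step is a genuine refinement that covers the general case.

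That step needs two adjustments.

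\begin{enumerate}
\item Nondegeneracy modulo $\mathfrak{m}$ does not follow from ``a derivation is determined by its values on functions''; that only gives nondegeneracy over $E$. The real reason is that $\mathfrak{m} = (t)$ is principal. Suppose $\partial(f) \in tE$ for all $f \in E$, and also $\partial(t) \in t^2E$ in the logarithmic case. Then $t^{-1}\partial$ is again a derivation of $E$ lying in $\mathscr{T}_E(-\mathrm{log}\,\Delta_E)$, so $\partial \in \mathfrak{m}\,\mathscr{T}_E(-\mathrm{log}\,\Delta_E)$.
\item With this in hand, the splitting and the freeness of $\mathscr{N}_{\mathscr{F}}$ are superfluous. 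You only need $\mathscr{F} \cap t\,\mathscr{T}_E(-\mathrm{log}\,\Delta_E) = t\mathscr{F}$, which is exactly the absence of $t$-torsion in $\mathscr{N}_{\mathscr{F}}$. This holds with no finiteness assumption on $\mathscr{T}_E(-\mathrm{log}\,\Delta_E)$. By contrast, ``torsion-free implies free'' can fail for $\mathscr{N}_{\mathscr{F}}$ if it is not finitely generated, so it is better to drop it.
\end{enumerate}

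Given these, the conclusion is immediate. Take any $\partial \in \mathscr{F} \setminus \mathfrak{m}\mathscr{F}$. Then $\partial \notin \mathfrak{m}\,\mathscr{T}_E(-\mathrm{log}\,\Delta_E)$, so some $df$ or $dt/t$ pairs with $\partial$ to a unit. Hence the image of $\Omega^1_E(\mathrm{log}\,\Delta_E)\otimes\kappa(e)$ in $(\mathscr{F}\otimes\kappa(e))^{\vee}$ has zero annihilator, so it is everything. Nakayama applied to the finitely generated $\mathscr{F}^{\vee}$ finishes the proof.

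The same remark applies to your field case. Writing ``nothing to prove'' there hides the identical $\Omega$ versus $\Omega^{\vee\vee}$ issue, but the argument above with $\mathfrak{m} = 0$ handles it verbatim.
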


\begin{proof}
    We have already seen the sufficient implication in Lemma \ref{lem:log_regular_log_saturated}.
    For the necessary implication, consider the exact sequence
    \begin{align}
        \label{eq:normal_ses}
        0 \rightarrow \mathscr{F} \rightarrow \mathscr{T}_E(-\mathrm{log} \, \Delta_E) \rightarrow \mathscr{N}_{\mathscr{F}} \rightarrow 0.
    \end{align}
    Note that both $\mathscr{F}$ and the log-normal sheaf of $\mathscr{F}$ are torsion-free, thus locally free over $E$, hence dualising (\ref{eq:normal_ses}) yields log-regularity.
\end{proof}

\begin{lemma}
    \label{lem:pullback_foliation_dvr}
    Let $\mathscr{G}$ be a log-regular foliation on $(\spec D, \Delta_D)$.
    Let $\mathscr{F}$ and $\mathscr{F}_L$ be the logarithmic pullback foliations on $(\spec E, \Delta_E)$ and $\spec L$ respectively of $\mathscr{G}$ and $\mathscr{G}|_K$ respectively.
    Then $\mathscr{F}$ is log-regular and $\mathscr{F}|_L = \mathscr{F}_L$.
\end{lemma}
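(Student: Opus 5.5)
By Lemma \ref{lem:dvr_log_saturated} on $(\spec E, \Delta_E)$, log-regularity of $\mathscr{F}$ is equivalent to log-saturation. So the plan is to prove that $\mathscr{F}$ is log-saturated, and then separately identify the generic fibre $\mathscr{F}|_L$ with $\mathscr{F}_L$.

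\textbf{Log-saturation.} Since $\mathscr{G}$ is log-regular, Lemma \ref{lem:log_regular_log_saturated} shows that $\mathscr{G}$ is log-saturated. The morphism $g$ is flat, since $E$ is torsion-free over the discrete valuation ring (or field) $D$ and is therefore flat. Hence Lemma \ref{lem:log_pullback}(2) applies and shows that $\mathscr{F}$ is log-saturated at $e$. Torsion-freeness at the closed point of a local ring is the whole claim, so $\mathscr{F}$ is log-saturated, hence log-regular.

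\textbf{Generic fibre.} Localising the defining cartesian square (\ref{diag:pullback_foliation}) at the generic point commutes with finite limits, because localisation is exact. This gives
\begin{align*}
\mathscr{F}|_L = \mathscr{T}_E(-\mathrm{log}\,\Delta_E)|_L \times_{(g^*\mathscr{T}_D(-\mathrm{log}\,\Delta_D))|_L} (g^*\mathscr{G})|_L .
\end{align*}
The log structures are supported on closed points, so they disappear at the generic point. We also have $\mathscr{T}_E|_L = \mathscr{T}_L$, since the module of differentials of $E$ over $\mathbbm{k}$ localises correctly and localisation of a finitely presented module commutes with taking duals. We need the same to hold for the relevant pieces: $\Omega^1_E$ need not be finitely generated over $E$, but we may still use $\Omega^1_L = \Omega^1_E \otimes_E L$, and the same for $D$. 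Moreover $(g^*\mathscr{T}_D)|_L = \mathscr{T}_K \otimes_K L$ and $(g^*\mathscr{G})|_L = \mathscr{G}|_K \otimes_K L$.

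These identifications say that the fibre product above is exactly the square defining the pullback of $\mathscr{G}|_K$ along $\spec L \to \spec K$, which is $\mathscr{F}_L$. The key point is that the map $\mathscr{T}_E \to g^*\mathscr{T}_D$, defined via flatness, localises to the analogous map for $L/K$. This holds because both maps are induced by the restriction of derivations along $D \subseteq E$, respectively $K \subseteq L$.

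\textbf{Main obstacle.} The delicate step is this compatibility of tangent sheaves with localisation, given that the $\Omega^1$ may fail to be finitely generated for non-essentially-finite-type discrete valuation rings. I would handle it by working directly with the description of $\mathscr{T}$ as derivations: a derivation of $L$ restricts to $E$ after clearing denominators, because the relevant modules here are free of finite rank in the cases considered.
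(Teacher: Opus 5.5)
Your proposal is correct and is essentially the paper's proof. Log-regularity of $\mathscr{G}$ gives log-saturation (Lemma \ref{lem:log_regular_log_saturated}); flatness of $g$ and part (2) of Lemma \ref{lem:log_pullback} pass this to $\mathscr{F}$; Lemma \ref{lem:dvr_log_saturated} upgrades it to log-regularity; and $\mathscr{F}|_L = \mathscr{F}_L$ because localisation commutes with the finite limit defining the logarithmic pullback. Your caveat about $\Omega^1_E$ not being finitely generated goes beyond the paper, which silently assumes $\iota^*\mathscr{T}_E(-\mathrm{log}\,\Delta_E) = \mathscr{T}_L$ as in Construction \ref{cons:regular_foliation_dvr}. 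Your fix only holds when $\Omega^1_E$ is finitely generated (e.g.\ $E$ essentially of finite type over $\mathbbm{k}$), not for an arbitrary discrete valuation ring such as a formal power series ring.
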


\begin{proof}
    Firstly note that $g$ is automatically flat, hence we may define logarithmic pullback foliations.
    The fact that $\mathscr{F}$ is log-regular follows by combining part (2) of Lemma \ref{lem:log_pullback} with Lemma \ref{lem:dvr_log_saturated}.
    Since $\mathscr{F}$ is the logarithmic pullback and limits commute with localisations, $\mathscr{F}|_L = \mathscr{F}_L$.
\end{proof}

\begin{construction}
    \label{cons:regular_foliation_dvr}
    Let $\mathscr{F}_L$ be a foliation on $\spec L$ and let $\iota : \spec L \rightarrow \spec E$ denote the inclusion of the generic point of $E$.
    We construct two unique natural foliations on $\spec E$:
    \begin{enumerate}
        \item[($\emptyset$)] $\mathscr{F}_E$, the unique regular foliation on $\spec E$ such that $\iota^* \mathscr{F}_E = \mathscr{F}_L$, and
        \item[(e)] $\mathscr{F}_E^{e}$, the unique log-regular foliation on $(\spec E, e)$ such that $\iota^* \mathscr{F}_E^{e} = \mathscr{F}_L$.
    \end{enumerate}
    Since $\mathscr{T}_E(-\mathrm{log} \, \Delta_E)$ is torsion free, the morphism
    \begin{align}
        \label{eq:tangent_sheaf_torsion_free}
        \mathscr{T}_E(-\mathrm{log} \, \Delta_E) \rightarrow \iota_* \iota^* \mathscr{T}_E(-\mathrm{log} \, \Delta_E) = \iota_* \mathscr{T}_L
    \end{align}
    is injective.
    Since $\iota$ is affine, the morphism $\iota_* \mathscr{F}_L \rightarrow \iota_* \mathscr{T}_L$ is injective.
    Let
    \begin{align}
        \label{eq:log_regular_foliation_def}
        \mathscr{F}_E^{\Delta} := \iota_* \mathscr{F}_L \cap \mathscr{T}_E(-\mathrm{log} \, \Delta_E) \subseteq \iota_* \mathscr{T}_L,
    \end{align}
    and define $\mathscr{F}_E$ and $\mathscr{F}_E^{e}$ to be $\mathscr{F}_E^{\Delta}$ when $\Delta_E$ is the empty set or $e$ respectively.
    We need to prove five facts.
    \begin{enumerate}[itemsep=1em]
        \item $\mathscr{F}_E^{\Delta}$ is a coherent subsheaf of $\mathscr{T}_E(-\mathrm{log} \, \Delta_E)$.
        This is true by construction and the fact that $E$ is Noetherian.
        
        \item $\mathscr{F}_E^{\Delta}$ is involutive.
        This follows from the fact that $\mathscr{F}_E^{\Delta}$ is the intersection of two involutive subsheaves of $\iota_* \mathscr{T}_L$.
        
        \item $\iota^* \mathscr{F}_E^{\Delta} = \mathscr{F}_L$.
        By construction, there is a natural morphism $\iota^* \mathscr{F}_E^{\Delta} \rightarrow \mathscr{F}_L$.
        This is injective.
        Indeed, since $\iota$ is flat, the morphism $\iota^* \mathscr{F}_E^{\Delta} \rightarrow \mathscr{T}_L$ is injective.
        To see surjectivity, let $\partial \in \mathscr{F}_L$.
        Because $\iota^* \mathscr{T}_E(-\mathrm{log} \, \Delta_E) = \mathscr{T}_L$, after clearing all denominators, we may find $a \in E$ such that $a \cdot \partial \in \mathscr{T}_E(-\mathrm{log} \, \Delta_E)$.
        But then, $a \cdot \partial \in \mathscr{F}_E^{\Delta}$ and $(a \cdot \partial) \otimes 1/a \rightarrow \partial \in \mathscr{F}_L$.
        
        \item $\mathscr{F}_E^{\Delta}$ is log-regular.
        Using (3) and the fact that $\iota$ is flat and affine, we obtain a morphism of short exact sequences
        \begin{equation}
            \label{diag:ses_generic_point}
            \begin{tikzcd}
                0 \arrow[r] & \mathscr{F}_E^{\Delta} \arrow[r] \arrow[d] & \mathscr{T}_E \left( - \mathrm{log}\,\Delta_E \right) \arrow[r] \arrow[d] & \mathscr{N}_{\mathscr{F}_E^{\Delta}} \arrow[r] \arrow[d] & 0 \\
                0 \arrow[r] & \iota_*\mathscr{F}_L \arrow[r] & \iota_* \mathscr{T}_L \arrow[r] & \iota_* \iota^* \mathscr{N}_{\mathscr{F}_E^{\Delta}} \arrow[r] & 0.
            \end{tikzcd}
        \end{equation}
        Because $\mathscr{F}_E^{\Delta}$ is defined as a limit, a simple diagram chasing argument shows that $\mathscr{N}_{\mathscr{F}_E^{\Delta}} \rightarrow \iota_* \iota^* \mathscr{N}_{\mathscr{F}_E^{\Delta}}$ is injective.
        This is the definition of torsion-free and we conclude by the necessary implication of Lemma \ref{lem:dvr_log_saturated}.
        
        \item $\mathscr{F}_E^{\Delta}$ is the unique foliation on $(E, \Delta_E)$ satisfying (3) and (4).
        Let $\mathscr{H}$ be another such foliation. Since $\mathscr{F}_E^{\Delta}$ is defined as a limit, we obtain a unique morphism of foliations $\mathscr{H} \rightarrow \mathscr{F}_E^{\Delta}$.
        But then the induced morphism of log-normal sheaves $\mathscr{N}_{\mathscr{H}} \rightarrow \mathscr{N}_{\mathscr{F}_E^{\Delta}}$ is a surjective morphism of locally free sheaves of the same rank, thus it is an isomorphism.
    \end{enumerate}
\end{construction}

\begin{lemma}
    \label{lem:dvr_log_smooth}
    Suppose that $E$ is a discrete valuation ring and that $K \subseteq L$ is a finite field extension.
    Then $D$ is a discrete valuation ring and $g$ induces an isomorphism of logarithmic tangent sheaves
    \begin{align}
        \label{eq:log_tangent_sheaf_dvr}
        g^* \mathscr{T}_D(-\mathrm{log} \, d) = \mathscr{T}_E(-\mathrm{log} \, e).
    \end{align}
\end{lemma}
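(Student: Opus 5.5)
First I show that $D$ is a discrete valuation ring. Since $K \subseteq L$ is finite and $E$ is a discrete valuation ring of $L$, the facts recalled in the introduction give that $D' := E \cap K$ is either $K$ or a discrete valuation ring of $K$. It is not $K$: if $t \in E$ is a uniformiser, then $t$ is algebraic over $K$, and a nonzero element of $K$ with positive $E$-valuation comes from the norm of $t$ (or from the constant term of its minimal polynomial). So $D'$ is a discrete valuation ring dominated by $E$. Because $D \subseteq E$ is local, $D \subseteq D'$ and $D$ is dominated by $D'$. I would then argue that $D = D'$. Either this is part of the implicit setup, namely $D$ is a discrete valuation ring or a field with fraction field $K$, and domination forces equality because a discrete valuation ring is maximal for domination; or, if $D$ is a field, then $D = K$, and locality of $D \subseteq E$ forces $K \subseteq E$ to map into units, which is impossible because $K$ contains an element of positive valuation. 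So $D$ is a discrete valuation ring and $e \mapsto d$. I expect this case bookkeeping to be routine.

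For the isomorphism, I would work on completions, or at least on the local structure. Let $s$ be a uniformiser of $D$ and $t$ one of $E$, and write $s = u t^{r}$ with $u \in E^\times$ and $r \geq 1$ the ramification index. Both sheaves are free of rank $n = \dim$ over the base, since $\Omega$ of a field extension of $\mathbbm{k}$ can be infinite-dimensional but the relative logarithmic differential module is what matters. The cleanest route is to show that $g$ is logarithmically étale, i.e. that
\begin{align*}
\Omega^1_{E/D}(\mathrm{log}\, e/d) = 0
\end{align*}
after the appropriate separatedness/finiteness considerations. Then the sequence $g^*\Omega^1_D(\mathrm{log}\, d) \to \Omega^1_E(\mathrm{log}\, e) \to \Omega^1_{E/D}(\mathrm{log}) \to 0$ gives surjectivity, and generic injectivity from $L/K$ being finite separable in characteristic zero gives injectivity, since the source is torsion-free. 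Dualising then yields the stated identity of tangent sheaves.

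The main obstacle is vanishing of the relative log differentials, which I reduce to the residue field and ramification. In $\Omega^1_{E/D}(\mathrm{log})$ we have $0 = d\log s = r\, d\log t + d\log u$, and $r$ is invertible in characteristic zero, so $d\log t$ lies in the span of differentials of units. For units, the extension of residue fields $\kappa(d) \subseteq \kappa(e)$ is finite, since $[L:K]$ bounds $ef$, and separable by characteristic zero, so $du$ is killed modulo $t$. A Nakayama-type argument then finishes. Since $\Omega$ need not be finitely generated, I would instead pass to a finite-type presentation: $E$ is a localisation of the integral closure of $D$ in $L$, which is finite over $D$ by characteristic zero and excellence, or more simply essentially of finite type. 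That makes the modules finite and Nakayama applicable. Alternatively, I would check the equality of the two saturated subsheaves of $\mathscr{T}_L$ directly: a derivation $\partial$ of $K$ extends uniquely to $L$, and I would verify that $\partial(D) \subseteq D$ and $\partial(s) \in sD$ if and only if the extension satisfies the same conditions for $E$ and $t$, using $s = u t^r$ and integrality.
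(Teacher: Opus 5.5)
Your proof is correct. Your first step is essentially the paper's: assume $K \subseteq E$ and derive a contradiction. The paper observes that $E$ would then be a domain finite over $K$, hence a field. Your constant-term argument works because, under that hypothesis, the coefficients of the minimal polynomial of $t$ lie in $E$.

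For the vanishing of $\Omega^1_g$ you take a genuinely different route. The paper passes to completions and writes $D \cong k_D[[s]]$, $E \cong k_E[[t]]$ by Cohen's structure theorem. It then uses Krull--Akizuki and étale base changes to arrange $k_D = k_E$ and $s \mapsto t^n$, and reads off $ds/s \mapsto n\,dt/t$. You stay with $E$ itself:
\begin{itemize}
\item Finite generation holds because $E$ is a localisation of the integral closure of $D$ in $L$, which is finite over $D$ since $L/K$ is separable.
\item The conormal sequence, together with separability of $\kappa(e)/\kappa(d)$, puts every $dx$ in $E\,dt + \mathfrak{m}_E\Omega^1_{E/D}$. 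Since $dt = t\,d\mathrm{log}\,t$, every $dx$ then lies in $\mathfrak{m}_E\,\Omega^1_g$.
\item The relation $d\mathrm{log}\,s = r\,d\mathrm{log}\,t + d\mathrm{log}\,u$, with $r$ invertible, places $d\mathrm{log}\,t$ there as well, and Nakayama finishes.
\end{itemize}
Both proofs use characteristic zero in the same two places: separability of the residue extension and invertibility of the ramification index. Yours avoids completed differentials, coefficient fields and the normalisation of the unit in $s = ut^n$. The paper's route gives an explicit local normal form. Your justification of injectivity (a torsion-free source, plus $\Omega^1_K \otimes_K L \cong \Omega^1_L$ for separable $L/K$) also supplies a step the paper asserts from local freeness alone.

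The one thing to repair is your garbled remark that both sheaves are free of rank $n$. What you actually need is that $\Omega^1_D(\mathrm{log}\,d)$ is free of finite rank. This gives torsion-freeness of $g^*\Omega^1_D(\mathrm{log}\,d)$ and lets dualising commute with flat pullback. It holds when $D$ is essentially of finite type over $\mathbbm{k}$, as in the paper's applications to divisorial valuations; the paper leaves this implicit too.
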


\begin{proof}
    We first show that $D$ must be a discrete valuation ring.
    Note that $E \cap K$ is either a discrete valuation ring or is equal to $K$.
    If $E \cap K$ were equal to $K$, there would morphisms $K \rightarrow E \rightarrow L$.
    Since $L$ is finite over $K$, $E$ would necessarily be a field, a contradiction. \par
        
    We obtain an associated exact sequence
    \begin{align}
        \label{eq:dvr_log_differential_ses}
        g^* \Omega_D^1(\mathrm{log}\, d) \rightarrow \Omega_E^1(\mathrm{log}\, e) \rightarrow \Omega_g^1 \rightarrow 0
    \end{align}
    Since $\Omega_D^1(\mathrm{log}\, d)$ is locally free, (\ref{eq:dvr_log_differential_ses}) is in fact short exact.
    Furthermore, since $g$ is flat, dual commutes with pullback and it suffices to show that $\Omega_g^1 = 0$.
    We check this with a computation, upon performing some initial reductions. \par
    
    Firstly, we may assume that the discrete valuations rings are complete.
    Indeed $\Omega_g^1 = 0$ if and only if its restriction to the closed point $e$ is zero.
    Let $k_D$ and $k_E$ be the residue fields of the closed points $d$ and $e$ respectively.
    By Cohen's structure theorem (\cite[\href{https://stacks.math.columbia.edu/tag/0C0S}{Lemma 0C0S}]{stacks-project}), we may assume that $D \cong k_D \llbracket s \rrbracket$ and $E \cong k_E \llbracket t \rrbracket$ for local parameters $s \in D$ and $t \in E$, and coefficient fields $k_D \subseteq D$ and $k_E \subseteq E$.
    Furthermore, since $K \subseteq L$ is a finite field extension, the Krull--Akizuki theorem implies that $k_D \rightarrow k_E$ is a finite morphism, hence, in characteristic zero, étale.
    It follows that the base change $k_D \llbracket s \rrbracket \rightarrow k_E \llbracket s \rrbracket$ is étale, thus we may replace $D = k_D \llbracket s \rrbracket$ with $k_E \llbracket s \rrbracket$, i.e. we may assume that $k_D = k_E$. \par

    Now, the morphism $D \rightarrow E$ is determined by the image of $s$, which we may write as $v \, t^n$ for a unit $v \in k_E^{\times}$ and a positive integer $n$.
    Up to replacing $k_D = k_E$ with the splitting field of $v^n - 1$, we may assume that the $n\textsuperscript{th}$ root of unity of $v$ lies in $k_D$.
    Indeed, in characteristic zero, such splitting field is étale over $k_D$.
    As a result, up to dividing $s$ by the root of unity of $v$, we may assume that $s \rightarrow t^n$. \par

    Under these reductions, $\Omega_g^1$ is the cokernel of
    \begin{align}
        \label{eq:dvr_log_differential_cokernel}
        k_E\llbracket t \rrbracket \left\langle \frac{ds}{s} \right\rangle &\rightarrow k_E\llbracket t \rrbracket \left\langle \frac{dt}{t} \right\rangle \\
        \frac{ds}{s} &\rightarrow \frac{nt^{n-1}dt}{t^n} = n \frac{dt}{t}.\nonumber
    \end{align}
    Since $n$ is invertible in $k_E$, $\Omega_g^1 = 0$.
\end{proof}

\subsection{Discrepancies of Foliations}
\label{subsec:discrepancies}

We follow the main ideas of \cite[\S I.ii]{MR3128985} in order to define discrepancies of foliations. \par

In this subsection, $X$ is a locally Noetherian normal scheme over a field $\mathbbm{k}$ of characteristic zero, $\Delta_X$ is a reduced Cartier divisor on $X$, and $x \in X$ is a point.

\begin{construction}
    \label{cons:discrepancy}
    Let $\mathscr{F}$ be a Gorenstein foliation on $(X, \Delta_X)$.
    We construct \emph{discrepancies} of $\mathscr{F}$ associated to divisorial valuations over $X$.
    Let $\pi : \spec E \rightarrow X$ be a divisorial valuation over $X$ and let $e$ be its closed point.
    Let $\Delta_E$ be a logarithmic structure on $\spec E$ such that $\pi$ is a logarithmic morphism.
    We have that $\Delta_E$ is either the empty set or the closed point $e$.
    By definition, $\pi$ induces an isomorphism between the generic point of $\spec E$ and the generic point of $X$.
    Let $L$ denote the field of fractions, we get logarithmic morphisms
    \begin{align}
    	\label{eq:generic_point_dvr}
		\spec L \xrightarrow{\iota} \spec (E, \Delta_E) \xrightarrow{\pi} (X, \Delta_X).
	\end{align}
	Let $\mathscr{F}_{L} = \iota^* \pi^* \mathscr{F}$ be the foliation induced on the generic point $\spec L$.
    We obtain a unique log-regular foliation $\mathscr{F}_E^{\Delta}$ on $\spec (E, \Delta_E)$ from Construction \ref{cons:regular_foliation_dvr}.
    We want to measure how distant $\mathscr{F}$ is from the log-regular foliation $\mathscr{F}_E^{\Delta}$.
    To this end, we compare the canonical sheaves of $\pi^* \mathscr{F}$ and $\mathscr{F}_E^{\Delta}$.
	Since $\mathscr{F}_E^{\Delta}$ is locally free and localisation commutes with exterior powers and duals, we have that $\omega_{\mathscr{F}_{L}}^{\vee} = \iota^*\omega_{\mathscr{F}_E^{\Delta}}^{\vee}$.
    Furthermore, since $\mathscr{F}$ is Gorenstein, we have that $\omega_{\mathscr{F}_{L}} = \iota^* \pi^* \omega_{\mathscr{F}}$.
    This yields morphisms 
	\begin{align}
		\label{eq:canonical_generic_inclusions}
        \omega_{\mathscr{F}_E^{\Delta}}^{\vee} &\rightarrow \iota_*\omega_{\mathscr{F}_{L}}^{\vee}, \text{ and} \\
		\pi^* \omega_{\mathscr{F}} &\rightarrow \iota_*\omega_{\mathscr{F}_{L}}.
	\end{align}
    In turn, we get a natural morphism
    \begin{align}
    	\label{eq:discrepancy_morphism}
        \varphi_E^{\Delta} : 
		\omega_{\mathscr{F}_E^{\Delta}}^{\vee} \otimes \pi^* \omega_{\mathscr{F}} \rightarrow
        \iota_*\omega_{\mathscr{F}_{L}}^{\vee} \otimes \iota_*\omega_{\mathscr{F}_{L}} \rightarrow
        \iota_* \left( \omega_{\mathscr{F}_{L}}^{\vee} \otimes \omega_{\mathscr{F}_{L}} \right) \rightarrow
        \iota_* \mathscr{O}_{\spec L} = L.
	\end{align}
    Let $s$ be a local generator of the invertible sheaf $\omega_{\mathscr{F}_E^{\Delta}}^{\vee} \otimes \pi^* \omega_{\mathscr{F}}$.
    Define the \emph{discrepancy} and \emph{log-discrepancy} of $(X, \mathscr{F})$ with respect to $E$ to be respectively
    \begin{align}
        \label{eq:discrepancy}
    	a_E(X, \mathscr{F}) &:= \nu_E(\varphi_E^{\Delta = \emptyset}(s)) \in \mathbb{Z}, \text{ and} \\
        \label{eq:log_discrepancy}
        a_E^{e}(X, \mathscr{F}) &:= \nu_E(\varphi_E^{\Delta = e}(s)) \in \mathbb{Z},
    \end{align}
    where $\nu_E$ is the discrete valuation of $E$.
    Clearly, the discrepancies are independent of the choice of generator.
    Note that, in order to define the non-log-discrepancy of $E$, we must have that $\pi^{-1}(\Delta_X) = \emptyset$.
\end{construction}

\begin{remark}
    \label{rem:consistency_epsilon}
    Let $\mathscr{F}$ be a Gorenstein saturated foliation on $X$.
    We note that this definition is consistent with the usual definition of discrepancy.
    It is well-known that, given any divisorial valuation $E$ over $X$ with closed point $e$, we may find a proper birational morphism $\pi : \tilde{X} \rightarrow X$ such that $\tilde{X}$ is normal, $e$ is a regular codimension one point of $\tilde{X}$ and $\pi$ is a sequence of blowing-ups in smooth centres (\cite[Lemma 2.45]{MR1658959}).
    We now consider the pullback foliation $\tilde{\mathscr{F}}$ on $\tilde{X}$.
    Note that, as $\pi$ is not flat, we cannot define the pullback foliation in our setup.
    However, one can show that there exists a unique saturated foliation $\tilde{\mathscr{F}}$ on $\tilde{X}$ which is isomorphic to $\mathscr{F}$ over the generic point of $X$.
    Since $e$ is a regular codimension one point, we may define the discrepancy of $\mathscr{F}$ with respect to $E$ as
    \begin{align}
        \label{eq:mmp_discrepancy}
        a_E(X, \mathscr{F}) = -\mathrm{ord}_E \, \left(\omega_{\tilde{\mathscr{F}}} \otimes \pi^* \omega_{\mathscr{F}}^{\vee} \right) \in \mathbb{Z}.
    \end{align}
    It is straightforward to see that the localisation of $\tilde{\mathscr{F}}$ at $e$ is the foliation $\mathscr{F}_E$ of Construction \ref{cons:regular_foliation_dvr}, thus the definitions agree.
    We may also define the log-discrepancy of $\mathscr{F}$ as
    \begin{align}
        \label{eq:epsilon}
        a_E^{e}(X, \mathscr{F}) &= a_E(X, \mathscr{F}) + \epsilon(E),
        \text{ where}  \\
        \epsilon(E) &=
        \begin{cases} 
            0, & \text{ if $e$ is $\mathscr{F}_E$-invariant,} \\
            1, & \text{ otherwise.}
        \end{cases}
        \nonumber
    \end{align}
    By definition, $\epsilon(E) = 0$ if and only if the inclusion $\mathscr{F}_E^e \subseteq \mathscr{F}_E$ is an equality.
    In fact, by standard logarithmic geometry, it follows that
    \begin{align}
        \label{eq:mmp_log_discrepancy}
        \epsilon(E) = \mathrm{ord}_E \, \left(\omega_{\mathscr{F}_E^e} \otimes \omega_{\mathscr{F}_E}^{\vee} \right).
    \end{align}
    As a result, the definitions of log-discrepancy also agree.
\end{remark}

\begin{remark}
    \label{rem:foliated_triples}
    Typically, in the setting of the Minimal Model Program of foliations, the objects of study are foliated triples $(X, \mathscr{F}, \Delta_X)$.
    This means that $\mathscr{F}$ is a saturated foliation on $X$ and $\Delta_X$ is a $\mathbb{Q}$-divisor with coefficients in $[0, 1]$ such that $K_{\mathscr{F}} + \Delta_X$ is $\mathbb{Q}$-Cartier.
    Our definitions adapt to this case.
    Suppose for simplicity that $\Delta_X$ and $K_{\mathscr{F}} + \Delta_X$ are reduced Cartier divisors (see Remark \ref{rem:foliated_triples_q} for the more general case).
    We consider the (unsaturated) foliation
    \begin{align}
        \label{eq:log_foliation}
        \mathscr{F}_X^{\Delta} = \mathscr{F} \cap \mathscr{T}_X(-\mathrm{log}\,\Delta_X) \subseteq \mathscr{T}_X.
    \end{align}
    Note that $\mathscr{F}_X^{\Delta} = \mathscr{F}$ in a neighbourhood of all prime divisors in the support of $\Delta_X$ which are $\mathscr{F}$-invariant.
    Otherwise, such prime divisor is $\mathscr{F}_X^{\Delta}$-invariant but not $\mathscr{F}$-invariant.
    By standard logarithmic geometry, it follows that
    \begin{align}
        \label{eq:log_canonical_bundles}
        \omega_{\mathscr{F}_X^{\Delta}} &= \omega_{\mathscr{F}} \otimes \mathscr{O}_X(\Delta_X), \text{ i.e.} \\
        \label{eq:log_canonical_divisor}
        K_{\mathscr{F}_X^{\Delta}} &= K_{\mathscr{F}} + \Delta_X,
    \end{align}
    and, in our terminology, $\mathscr{F}_X^{\Delta}$ is a foliation on $(X, \Delta_X)$.
\end{remark}

\subsection{Singularities of Foliations}
\label{subsec:singularities}

We now use discrepancies to define canonical and log-canonical singularities of foliations.
We also show how these singularities relate with regularity and saturation. \par

In this subsection, $X$ is a locally Noetherian normal scheme over a field $\mathbbm{k}$ of characteristic zero, $\Delta_X$ is a reduced Cartier divisor on $X$, and $x \in X$ is a point.

\begin{definition}
    \label{def:singularities}
    Let $\mathscr{F}$ be a foliation on $(X, \Delta_X)$.
    Then $\mathscr{F}$ is \emph{(log-)canonical} at $x$ if $\mathscr{F}$ is Gorenstein at $x$ and any divisorial valuation over $X$ with centre containing $x$ has non-negative (log-)discrepancy, i.e. the image
    \begin{align}
        \label{eq:discrepancy_image}
        \mathrm{im}\left( \varphi_E^{\Delta} \right) \subseteq L
    \end{align}
    factors through $E \subseteq L$.
\end{definition}

Note that for a singularity to be canonical at $x$, we must have that $x \notin \Delta_X$.

\begin{lemma}
    \label{lem:regular_implies_canonical}
    Let $\mathscr{F}$ be a foliation on $(X, \Delta_X)$.
    If $\mathscr{F}$ is (log-)regular at $x$, $\mathscr{F}$ is (log-)canonical at $x$.
\end{lemma}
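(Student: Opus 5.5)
We argue directly from Definition \ref{def:singularities} and reduce the lemma to a statement about foliations on discrete valuation rings. Suppose $\mathscr{F}$ is (log-)regular at $x$. By Definition \ref{def:log_regular}, $\mathscr{F}$ is locally free at $x$, hence Gorenstein there. After shrinking $X$ to an affine neighbourhood of $x$, we may assume $\mathscr{F}$ is locally free and (log-)regular everywhere. In the canonical case $x\notin\Delta_X$, so we may also shrink so that $\Delta_X=\emptyset$.

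Fix a divisorial valuation $\pi:\spec E\to X$ whose centre contains $x$, with the logarithmic structure $\Delta_E$ equal to $\emptyset$ in the canonical case and to $e$ in the log-canonical case. The morphism $\pi$ is flat: $E$ is a DVR dominating the integral scheme $X$, so it is torsion-free over the local ring. Hence we may form the logarithmic pullback foliation $\pi^{\natural}\mathscr{F}$ on $(\spec E,\Delta_E)$ of Definition \ref{def:pullback_foliation}. We want to compare it with both $\pi^*\mathscr{F}$ and $\mathscr{F}^{\Delta}_E$.

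\emph{Step 1.} By Lemma \ref{lem:log_regular_log_saturated}, $\mathscr{F}$ is log-saturated. Part (2) of Lemma \ref{lem:log_pullback} then shows that $\pi^{\natural}\mathscr{F}$ is log-saturated, and Lemma \ref{lem:dvr_log_saturated} upgrades this to log-regular. Localisation commutes with limits, so $\pi^{\natural}\mathscr{F}$ restricts to $\mathscr{F}_L$ at the generic point. The uniqueness statement (5) of Construction \ref{cons:regular_foliation_dvr} therefore gives $\pi^{\natural}\mathscr{F}=\mathscr{F}^{\Delta}_E$.

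\emph{Step 2.} The limit defining $\pi^{\natural}\mathscr{F}$ comes with a natural morphism $\pi^{\natural}\mathscr{F}\to\pi^*\mathscr{F}$ of rank-$r$ sheaves, where $r=\mathrm{rk}\,\mathscr{F}$. This morphism is compatible with the identifications of both sides with $\mathscr{F}_L$ at the generic point. Taking $\Lambda^r$ produces a morphism of invertible $E$-modules
\begin{align*}
\omega_{\mathscr{F}^{\Delta}_E}^{\vee}\to\pi^*\omega_{\mathscr{F}}^{\vee},
\end{align*}
which is compatible with the maps to $\iota_*\omega_{\mathscr{F}_L}^{\vee}$. Tensoring with $\pi^*\omega_{\mathscr{F}}$ shows that $\varphi^{\Delta}_E$ factors as
\begin{align*}
\omega_{\mathscr{F}^{\Delta}_E}^{\vee}\otimes\pi^*\omega_{\mathscr{F}}\to\pi^*\omega_{\mathscr{F}}^{\vee}\otimes\pi^*\omega_{\mathscr{F}}=E\subseteq L.
\end{align*}
Its image therefore lies in $E$, so the (log-)discrepancy is non-negative.

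The main obstacle is Step 2. We must check that the morphism $\pi^{\natural}\mathscr{F}\to\pi^*\mathscr{F}$ really induces the same map to $L$ as the map used to define $\varphi^{\Delta}_E$ in Construction \ref{cons:discrepancy}. Both factor through the canonical identification $\iota^*\pi^*\mathscr{F}=\mathscr{F}_L$, so this should be a diagram check using that $\iota$ is flat and $\pi^*\mathscr{F}$ is locally free.

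If the pullback route is awkward, there is a fallback. Compare $\mathscr{F}^{\Delta}_E$ directly with the image of $\pi^*\mathscr{F}$ inside $\iota_*\mathscr{T}_L$. Since $\pi^*\mathscr{F}\subseteq\pi^*\mathscr{T}_X(-\log\Delta_X)$, and the latter maps to $\mathscr{T}_E(-\log\Delta_E)$ because $\pi$ is logarithmic, the image of $\pi^*\mathscr{F}$ lies in $\iota_*\mathscr{F}_L\cap\mathscr{T}_E(-\log\Delta_E)=\mathscr{F}^{\Delta}_E$. Regularity of $\mathscr{F}$ makes this map $\pi^*\mathscr{F}\to\mathscr{F}^{\Delta}_E$ an isomorphism, because the surjection $\Omega^1_X(\log\Delta_X)\to\mathscr{F}^{\vee}$ pulls back to a surjection. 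This again yields the factorisation through $E$.
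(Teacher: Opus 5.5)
The proposal has a genuine gap at its first step. The map $\pi:\spec E\to X$ is not flat in the cases that matter, so the logarithmic pullback $\pi^{\natural}\mathscr{F}$ of Step 1 is not available. Torsion-freeness implies flatness only over Dedekind (more generally Pr\"ufer) domains. Suppose the centre $c$ of $E$ has codimension at least two. Then $\mathscr{O}_{X,c}\to E$ is a local homomorphism with $\dim E=1<\dim\mathscr{O}_{X,c}$. This is impossible for a flat local homomorphism, since such maps satisfy $\dim E=\dim\mathscr{O}_{X,c}+\dim E/\mathfrak{m}_c E$.

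Consequently neither Definition \ref{def:pullback_foliation} nor part (2) of Lemma \ref{lem:log_pullback} applies. For singular $X$ there is in general no natural map $\mathscr{T}_E(-\mathrm{log}\,\Delta_E)\to\pi^*\mathscr{T}_X(-\mathrm{log}\,\Delta_X)$. The torsion-freeness of $\pi^*\mathscr{N}_{\mathscr{F}}$, which part (2) relies on, is also lost. The compatibility check you flag in Step 2 is the easy part. The real content is producing a map $\mathscr{F}^{\Delta}_E\to\pi^*\mathscr{F}$ without flatness.

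Your fallback does not fill this gap: it goes in the wrong direction, and its conclusion is false. A logarithmic morphism gives $\pi^*\Omega^1_X(\mathrm{log}\,\Delta_X)\to\Omega^1_E(\mathrm{log}\,\Delta_E)$; forms pull back, vector fields do not. Take $X=\mathbb{A}^2$ and $\mathscr{F}=\mathscr{T}_X$, which is regular. Let $E$ be the local ring of the blow-up of the origin at the generic point of the exceptional divisor, with $u=x$ and $v=y/x$. Then $\partial_y=u^{-1}\partial_v\notin\mathscr{T}_E$. Moreover $\mathscr{F}_E=\mathscr{T}_E$ is strictly contained in $\pi^*\mathscr{F}=E\,\partial_u\oplus E\,u^{-1}\partial_v$, which gives $a_E(X,\mathscr{F})=1$. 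An isomorphism $\pi^*\mathscr{F}\cong\mathscr{F}^{\Delta}_E$ would make every discrepancy of a regular foliation zero.

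The paper avoids flatness by working on the cotangent side. Set $\mathscr{K}=\ker(\Omega^1_X(\mathrm{log}\,\Delta_X)\to\mathscr{F}^{\vee})$. By right exactness, $\pi^*\mathscr{F}^{\vee}$ is the cokernel of $\pi^*\mathscr{K}\to\pi^*\Omega^1_X(\mathrm{log}\,\Delta_X)$. The composite $\pi^*\mathscr{K}\to\Omega^1_E(\mathrm{log}\,\Delta_E)\to(\mathscr{F}^{\Delta}_E)^{\vee}$ vanishes, because it vanishes at the generic point and the target is torsion-free. This yields $\pi^*\mathscr{F}^{\vee}\to(\mathscr{F}^{\Delta}_E)^{\vee}$, and taking determinants gives $\omega^{\vee}_{\mathscr{F}^{\Delta}_E}\otimes\pi^*\omega_{\mathscr{F}}\to E$.

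Equivalently, you can rescue Steps 1--2 as follows. Replace $\pi^*\mathscr{T}_X(-\mathrm{log}\,\Delta_X)$ with $(\pi^*\Omega^1_X(\mathrm{log}\,\Delta_X))^{\vee}=(\pi^*\mathscr{K})^{\vee}\oplus\pi^*\mathscr{F}$, which holds because the sequence splits locally. Set $\mathscr{H}=\ker(\mathscr{T}_E(-\mathrm{log}\,\Delta_E)\to(\pi^*\mathscr{K})^{\vee})$. Its generic fibre is $\mathscr{F}_L$, and its log-normal sheaf embeds in a dual, hence is torsion-free. Therefore $\mathscr{H}=\mathscr{F}^{\Delta}_E$, and the projection gives the needed map $\mathscr{H}\to\pi^*\mathscr{F}$.
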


\begin{proof}
    We work locally around $x$.
    By definition of regularity, $\mathscr{F}$ is Gorenstein.
    Let $\pi : \spec E \rightarrow X$ be a divisorial valuation and set $\Delta_E = \emptyset$ if $x \notin \Delta_X$ and $\Delta_E = e$ if $x \in \Delta_X$, where $e$ is the unique closed point of $E$.
    With this convention, $\pi : (\spec E, \Delta_E) \rightarrow (X, \Delta_X)$ is a logarithmic morphism.
    By assumption, we have a surjection $\Omega_X^1(\mathrm{log}\,\Delta_X) \rightarrow \mathscr{F}^{\vee}$.
    Let $\mathscr{K}$ be its kernel.
    We get exact sequences
    \begin{equation}
        \label{diag:regular_implies_canonical}
        \begin{tikzcd}
            & \pi^*\mathscr{K} \arrow[r] & \pi^*\Omega_X^1(\mathrm{log} \, \Delta_X) \arrow[r] \arrow[d] & \pi^*\mathscr{F}^{\vee} \arrow[r] & 0 \\
            0 \arrow[r] & \mathscr{N}^{\vee}_{\mathscr{F}_E^{\Delta}} \arrow[r] & \Omega_E^1(\mathrm{log} \, \Delta_E) \arrow[r] & {\mathscr{F}_E^{\Delta}}^{\vee} \arrow[r] & 0.
        \end{tikzcd}
    \end{equation}
    To show that the discrepancy is zero, we show that the morphism between logarithmic sheaves of differentials descends to a morphism
    \begin{align}
        \label{eq:regular_foliations_discrepancy}
        \pi^*\mathscr{F}^{\vee} \rightarrow {\mathscr{F}_E^{\Delta}}^{\vee}.
    \end{align}
    Indeed, taking top exterior powers of (\ref{eq:regular_foliations_discrepancy}) and using local freeness shows that
    \begin{align}
        \label{eq:discrepancy_morphism_canonical}
        \pi^*\omega_{\mathscr{F}} &\rightarrow \omega_{\mathscr{F}_E^{\Delta}}, \text{ hence} \\
        \omega_{\mathscr{F}_E^{\Delta}}^{\vee} \otimes \pi^*\omega_{\mathscr{F}} &\rightarrow \mathscr{O}_{\spec E}.
    \end{align}
    In order to see (\ref{eq:regular_foliations_discrepancy}), it suffices to show that the image of $\pi^* \mathscr{K}$ in $\Omega_E^1(\mathrm{log} \, \Delta_E)$ is contained in the dual of $\mathscr{N} := \mathscr{N}_{\mathscr{F}_E^{\Delta}}$.
    Let $\iota : \spec L \rightarrow \spec E$ be the inclusion of the generic point.
    By construction, $\iota^* \pi^* \mathscr{K} = \iota^* \mathscr{N}^{\vee}$ as subsheaves of $\Omega_L^1$, and there is a morphism $\pi^* \mathscr{K} \rightarrow \iota_* \iota^* \mathscr{N}^{\vee}$.
    Since $\mathscr{N}^{\vee}$ is torsion free, $\mathscr{N}^{\vee}$ is a subsheaf of $\iota_* \Omega_L^1$, thus it must be the image of $\pi^* \mathscr{K}$ in $\iota_* \Omega_L^1$.
\end{proof}

\begin{lemma}
    \label{lem:canonical_implies_log_canonical}
    Let $\mathscr{F}$ be a foliation on $X$.
    If $\mathscr{F}$ is canonical at $x$, then $\mathscr{F}$ is log-canonical at $x$.
\end{lemma}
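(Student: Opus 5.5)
The plan is to compare, for each divisorial valuation $\pi : \spec E \rightarrow X$ whose centre contains $x$, the two natural morphisms $\varphi_E^{\emptyset}$ and $\varphi_E^{e}$ of Construction \ref{cons:discrepancy}. Here $\Delta_X = \emptyset$, so the morphism $\pi$ is logarithmic for both choices $\Delta_E = \emptyset$ and $\Delta_E = e$. Both discrepancies are therefore defined, and the Gorenstein hypothesis is common to both notions. It suffices to show that $a_E^{e}(X, \mathscr{F}) \geq a_E(X, \mathscr{F})$, since then non-negativity of the discrepancy forces non-negativity of the log-discrepancy.

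First I will produce an inclusion of foliations $\mathscr{F}_E^{e} \subseteq \mathscr{F}_E$ on $\spec E$. This follows directly from the definition (\ref{eq:log_regular_foliation_def}): both are intersections of $\iota_* \mathscr{F}_L$ with a subsheaf of $\iota_* \mathscr{T}_L$, and $\mathscr{T}_E(-\mathrm{log}\, e) \subseteq \mathscr{T}_E$. Both foliations are locally free of the same rank, being torsion-free over a discrete valuation ring, and they agree at the generic point by part (3) of Construction \ref{cons:regular_foliation_dvr}. Taking top exterior powers then gives an injection $\Lambda^{\mathrm{top}} \mathscr{F}_E^{e} \rightarrow \Lambda^{\mathrm{top}} \mathscr{F}_E$ of invertible sheaves which is an isomorphism at the generic point. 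Dualising, we get $\omega_{\mathscr{F}_E} \rightarrow \omega_{\mathscr{F}_E^{e}}$, equivalently an inclusion $\omega_{\mathscr{F}_E^{e}}^{\vee} \subseteq \omega_{\mathscr{F}_E}^{\vee}$ inside $\iota_* \omega_{\mathscr{F}_L}^{\vee}$.

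Next I will check compatibility with $\varphi$. Tensoring this inclusion with $\pi^* \omega_{\mathscr{F}}$ gives $\omega_{\mathscr{F}_E^{e}}^{\vee} \otimes \pi^*\omega_{\mathscr{F}} \rightarrow \omega_{\mathscr{F}_E}^{\vee} \otimes \pi^*\omega_{\mathscr{F}}$. The composite with $\varphi_E^{\emptyset}$ equals $\varphi_E^{e}$, because both factor through the same maps into $\iota_*$ of the generic objects in (\ref{eq:discrepancy_morphism}). Hence $\mathrm{im}(\varphi_E^{e}) \subseteq \mathrm{im}(\varphi_E^{\emptyset}) \subseteq E$. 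Concretely, a generator $s^e$ maps to $u\, t^{k} s$ with $k \geq 0$, so $a_E^e = a_E + k \geq a_E \geq 0$. This is consistent with $k = \epsilon(E) \in \{0, 1\}$ from Remark \ref{rem:consistency_epsilon}, although only $k \geq 0$ is needed.

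The main obstacle is a bookkeeping point: verifying that the two versions of $\varphi$ are genuinely compatible. This requires that the identifications $\iota^* \omega_{\mathscr{F}_E^{\Delta}}^{\vee} = \omega_{\mathscr{F}_L}^{\vee}$ used for each $\Delta$ are induced by the inclusion $\mathscr{F}_E^{e} \subseteq \mathscr{F}_E$. I expect this to follow from naturality of localisation with respect to exterior powers and duals.
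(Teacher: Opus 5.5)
Your proposal is correct and is the paper's argument: the inclusion $\mathscr{F}_E^{e} \subseteq \mathscr{F}_E$ induces $\omega_{\mathscr{F}_E^{e}}^{\vee} \rightarrow \omega_{\mathscr{F}_E}^{\vee}$, so $\varphi_E^{e}$ factors through $\varphi_E$, which gives $a_E(X, \mathscr{F}) \leq a_E^{e}(X, \mathscr{F})$. Your compatibility check on the generic identifications and the explicit valuation computation $a_E^{e} = a_E + k$ spell out steps the paper leaves implicit.
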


\begin{proof}
    We note that $\mathscr{F}_E^{e} \subseteq \mathscr{F}_E$, hence there is an induced morphism
    \begin{align}
        \label{eq:morphism_canonical_log_canonical}
        \omega_{\mathscr{F}_E^{e}}^{\vee} \rightarrow \omega_{\mathscr{F}_E}^{\vee}.
    \end{align}
    It follows that $\varphi_E^{e}$ factors through $\varphi_E$, thus
    \begin{align}
        \label{eq:discrepancy_inequality}
        a_E(X, \mathscr{F}) \leq a_E^{e}(X, \mathscr{F}).
    \end{align}
    If the left hand side is non-negative, so is the right hand side.
\end{proof}

\begin{lemma}
    \label{lem:canonical_implies_saturated}
    Let $\mathscr{F}$ be a foliation on $(X, \Delta_X)$.
    Assume $\mathscr{F}$ is reflexive.
    If $\mathscr{F}$ is canonical at $x$, then $\mathscr{F}$ is saturated at $x$.
    If $\mathscr{F}$ is log-canonical at $x$ and $x \in \Delta_X$, then $\mathscr{F}$ is log-saturated at $x$.
\end{lemma}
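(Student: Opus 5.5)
We argue by contradiction, following the classical argument that canonical foliations are saturated: if $\mathscr{F}$ is not (log-)saturated at $x$, we produce a divisorial valuation whose (log-)discrepancy is negative. Work locally around $x$ and let $\mathscr{F}^{\mathrm{sat}}$ be the (log-)saturation of $\mathscr{F}$. In the canonical case this is the kernel of $\mathscr{T}_X \rightarrow \mathscr{N}_{\mathscr{F}}/\mathrm{tors}$ (recall $x \notin \Delta_X$). In the log-canonical case with $x \in \Delta_X$ it is the kernel of $\mathscr{T}_X(-\mathrm{log}\,\Delta_X) \rightarrow \mathscr{N}_{\mathscr{F}}/\mathrm{tors}$. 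In both cases $\mathscr{F}^{\mathrm{sat}}$ is a foliation: it is involutive, since torsion sections bracket into torsion, and it has the same generic rank as $\mathscr{F}$. Both $\mathscr{F}$ and $\mathscr{F}^{\mathrm{sat}}$ are reflexive, the latter because it is saturated in a reflexive sheaf. So if $\mathscr{F} \neq \mathscr{F}^{\mathrm{sat}}$ near $x$, they already differ at some codimension one point $p$ of $X$ whose closure contains $x$.

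At the codimension one point $p$, the local ring $E := \mathscr{O}_{X,p}$ is a discrete valuation ring, and $\pi : \spec E \rightarrow X$ is a divisorial valuation with centre containing $x$. We take $\Delta_E = e$ if $p \in \Delta_X$ and $\Delta_E = \emptyset$ otherwise. In the canonical case $p \notin \Delta_X$, and we use the non-log discrepancy. The localisation $\mathscr{F}^{\mathrm{sat}}_p$ is log-saturated, so by Lemma~\ref{lem:dvr_log_saturated} it is log-regular. By the uniqueness in Construction~\ref{cons:regular_foliation_dvr}, it coincides with $\mathscr{F}_E^{\Delta}$ (or with $\mathscr{F}_E$ in the canonical case).

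For the log-canonical case with $p \notin \Delta_X$, we must still use the log-discrepancy, so we compare with $\mathscr{F}_E^e \subseteq \mathscr{F}_E$ and need to control $\epsilon(E)$. To do this we check whether $\mathscr{F}_p \subseteq \mathscr{F}_E^e$, i.e.\ whether $\mathscr{F}_p$ preserves the ideal of $e$. If it does, we compare $\mathscr{F}_p$ with $\mathscr{F}_E^e$ instead. If it does not, $\mathscr{F}_p$ fails to be saturated only at a non-invariant divisor, and the defect is at least as large as the $\epsilon = 1$ shift. This case distinction is the main obstacle. If it cannot be arranged, I expect the hypothesis $x \in \Delta_X$ to be used through the observation that the relevant divisors can be taken inside $\Delta_X$, or to come from blowing up a component of the non-saturated locus inside $\Delta_X$.

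Now $\mathscr{F}_p \subsetneq \mathscr{F}_E^{\Delta}$ is an inclusion of free $E$-modules of the same rank. Its top exterior power $\Lambda^{\mathrm{top}}\mathscr{F}_p \rightarrow \Lambda^{\mathrm{top}}\mathscr{F}_E^{\Delta}$ is therefore multiplication by some $t^m$ with $m \geq 1$, where $t$ is a uniformiser. Dualising gives $\omega_{\mathscr{F}_E^{\Delta}} = t^m \, \pi^*\omega_{\mathscr{F}}$ inside $\iota_*\omega_{\mathscr{F}_L}$. Hence $\varphi_E^{\Delta}$ sends a generator of $\omega^{\vee}_{\mathscr{F}_E^{\Delta}} \otimes \pi^*\omega_{\mathscr{F}}$ to $t^{-m}$ times a unit. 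The discrepancy is then $-m < 0$, contradicting (log-)canonicity at $x$. We conclude that $\mathscr{F} = \mathscr{F}^{\mathrm{sat}}$ near $x$, i.e.\ $\mathscr{F}$ is (log-)saturated at $x$.
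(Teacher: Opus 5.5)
Your canonical case is correct and matches the paper's argument. You saturate, reduce by reflexivity to codimension-one points $p$ with $x\in\overline{\{p\}}$, identify $\mathscr{F}^{\mathrm{sat}}_p=\mathscr{F}_E$ via Lemma~\ref{lem:dvr_log_saturated}, and read off $a_E=-m$.

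The gap is the one you flag yourself, and it cannot be closed. In the log-canonical case, take a codimension-one point $p\notin\Delta_X$ through $x$. There $\mathscr{F}^{\mathrm{sat}}_p=\mathscr{F}_E$, but log-canonicity only controls $a_E^{e}=-m+\epsilon(E)$. So it yields $m\leq\epsilon(E)\leq 1$, not $m=0$. Neither of your branches helps. If $\mathscr{F}_p\subseteq\mathscr{F}_E^{e}$, you only get $\mathscr{F}_p=\mathscr{F}_E^{e}$, which is still strictly smaller than $\mathscr{F}_E$ when $\epsilon(E)=1$. A defect ``at least as large as the shift'' includes exactly the non-contradictory case $m=\epsilon(E)=1$.

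No choice of divisor can rescue this, because the assertion is false there. Take $X=\mathbb{A}^2=\spec\mathbbm{k}[u,w]$, $\Delta_X=\{w=0\}$, $x$ the origin, and $\mathscr{F}=\mathscr{O}_X\cdot u\partial_u\subseteq\mathscr{T}_X(-\mathrm{log}\,\Delta_X)=\mathscr{O}_X\partial_u\oplus\mathscr{O}_X w\partial_w$. It is free of rank one, hence reflexive and Gorenstein. For every divisorial valuation $E$, a generator $h\partial_u$ of $\mathscr{F}_E^{e}$ is a logarithmic derivation of $E$. Such derivations $D$ satisfy $\nu_E(Dg)\geq\nu_E(g)$, so $\nu_E(h)=\nu_E(h\partial_u(u))\geq\nu_E(u)$ and $a_E^{e}(X,\mathscr{F})=\nu_E(h/u)\geq 0$. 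Thus $\mathscr{F}$ is log-canonical everywhere. Yet $\mathscr{N}_{\mathscr{F}}\cong\mathscr{O}_X/(u)\oplus\mathscr{O}_X$ has torsion at $x\in\Delta_X$. Along $\{u=0\}$ one has $a_E=-1$, $\epsilon(E)=1$ and $a_E^{e}=0$, which is precisely your borderline case.

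The paper's proof does not avoid this either. It asserts that the localisation of $\Delta_X$ at every codimension-one point $e$ is $e$ ``by assumption'', which is only true for $e\in\Delta_X$. For $p\in\Delta_X$, your comparison of $\mathscr{F}_p$ with $\mathscr{F}_E^{e}$ is correct. So what the argument really proves in the log case is log-saturation at $x$ under an extra hypothesis: $\mathscr{F}$ must be saturated at the codimension-one points through $x$ outside $\Delta_X$, e.g.\ regular off $\Delta_X$. That is the situation in which the lemma is used in Proposition~\ref{prop:formal_representable}, where $\pi_X^*\mathscr{F}$ is regular away from $\Delta_{\mathbf{Z}}^{\mathrm{ss}}$.
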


\begin{proof}
    We work locally around $x$.
    We want to show that the log-normal sheaf $\mathscr{N}_{\mathscr{F}}$ is torsion-free.
    Let $\bar{\mathscr{N}}_{\mathscr{F}}$ be the torsion free quotient of $\mathscr{N}_{\mathscr{F}}$ and let
    \begin{align}
        \label{eq:foliation_saturation}
        \bar{\mathscr{F}} = \mathrm{ker} \, \Bigl( \mathscr{T}_X(-\mathrm{log}\,\Delta_X) \rightarrow \bar{\mathscr{N}}_{\mathscr{F}} \Bigr).
    \end{align}
    This is a reflexive foliation on $X$.
    By assumption, $\mathscr{F}$ is also reflexive, hence it suffices to show that $\mathscr{F}\subseteq \bar{\mathscr{F}}$ is an equality at all codimension one points $e \in X$.
    Let $\pi : \spec E \rightarrow X$ be the localisation of $X$ at $e$ and set $\Delta_E$ to be the localisation of $\Delta_X$.
    Note that, if $\mathscr{F}$ is canonical at $x$, $\Delta_E = \emptyset$, and if $\mathscr{F}$ is log-canonical at $x$, $\Delta_E = e$ by assumption.
    Since localisation is flat, it preserves torsion-freeness, and we see that the localisation $\bar{\mathscr{F}}_e$ on $(\spec E, \Delta_E)$ is log-saturated.
    Therefore, it must be equal to $\mathscr{F}_E^{\Delta}$ (Lemma \ref{lem:dvr_log_saturated}).
    We obtain an inclusion $\mathscr{F}_e = \pi^* \mathscr{F} \subseteq \mathscr{F}_E^{\Delta}$, hence a morphism $\omega_{\mathscr{F}_E^{\Delta}} \rightarrow \pi^* \omega_{\mathscr{F}}$.
    Over the generic point $\spec L$, both canonical sheaves restrict to $\omega_{\mathscr{F}_L}$, hence, by applying the morphism $\varphi_E^{\Delta}$ of (\ref{eq:discrepancy_morphism}), we see that the (log-)discrepancy with respect to $E$ is non-positive.
    But $\mathscr{F}$ is (log-)canonical at $x$, thus its (log-)discrepancy must be $0$, i.e. $\omega_{\mathscr{F}_E^{\Delta}} = \pi^* \omega_{\mathscr{F}}$.
    Now, $\pi^* \mathscr{F} \subseteq \mathscr{F}_E^{\Delta}$ is an inclusion of locally free sheaves whose determinant induces an isomorphism, therefore $\pi^* \mathscr{F} = \mathscr{F}_E^{\Delta}$.
    This proves that $\mathscr{F} = \bar{\mathscr{F}}$, i.e. $\mathscr{F}$ is (log-)saturated.
\end{proof}

\begin{remark}
    \label{rem:singularities_summary}
    Let $\mathscr{F}$ be a foliation on $(X, \Delta_X)$.
    We can summarise Lemma \ref{lem:regular_implies_canonical} and Lemma \ref{lem:canonical_implies_log_canonical} with the following diagram of implications.
    \begin{equation}
        \label{diag:singularities_summary}
        \begin{tikzcd}
            \mathscr{F} \text{ regular} \arrow[r] \arrow[d] & \mathscr{F} \text{ canonical} \arrow[r] \arrow[d] & \mathscr{F} \text{ saturated} \arrow[d] \\
            \mathscr{F} \text{ log-regular} \arrow[r] & \mathscr{F} \text{ log-canonical} \arrow[r] & \mathscr{F} \text{ log-saturated}.
        \end{tikzcd}
    \end{equation}
\end{remark}

\begin{lemma}
    \label{lem:regular_canonical_isomorphism}
    Let $\mathscr{F} \subseteq \mathscr{F}^{\prime}$ be two foliations on $(X, \Delta_X)$.
    Assume that $\mathscr{F}$ is reflexive, and that $\mathscr{F}$ and $\mathscr{F}^{\prime}$ have the same rank at the unique generic point generalising $x$.
    If $\mathscr{F}$ is canonical at $x$ and $\mathscr{F}^{\prime}$ is regular at $x$, then $\mathscr{F} = \mathscr{F}^{\prime}$ locally around $x$.
    If $\mathscr{F}$ is log-canonical at $x$, $\mathscr{F}^{\prime}$ is log-regular at $x$, and $x \in \Delta_X$, then $\mathscr{F} = \mathscr{F}^{\prime}$ locally around $x$.
\end{lemma}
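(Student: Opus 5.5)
The strategy is to follow the pattern of Lemma \ref{lem:regular_foliations_isomorphic} and Lemma \ref{lem:canonical_implies_saturated}. We work locally around $x$ and reduce to checking equality at codimension one points. Since $\mathscr{F}^{\prime}$ is (log-)regular at $x$, it is locally free near $x$ and in particular reflexive; $\mathscr{F}$ is reflexive by assumption. The inclusion $\mathscr{F} \subseteq \mathscr{F}^{\prime}$ is therefore a morphism of reflexive sheaves on a normal scheme. It is an isomorphism as soon as it is one after localising at every codimension one point $e$ of a neighbourhood of $x$.

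Fix such a point $e$. Let $\pi : \spec E \rightarrow X$ be the localisation and let $\Delta_E$ be the localisation of $\Delta_X$. In the canonical case, canonicity forces $x \notin \Delta_X$, so after shrinking we have $\Delta_E = \emptyset$. In the log-canonical case we set $\Delta_E = \Delta_X|_E$, and we choose the convention $\Delta_E = e$ where needed, exactly as in the proof of Lemma \ref{lem:canonical_implies_saturated}.

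Localisation is flat, so $\pi^*\mathscr{F}^{\prime}$ stays log-regular (equivalently log-saturated, by Lemma \ref{lem:dvr_log_saturated}) on $(\spec E, \Delta_E)$. Both $\pi^*\mathscr{F}$ and $\pi^*\mathscr{F}^{\prime}$ restrict to the same foliation $\mathscr{F}_L$ on the generic point, since the ranks agree and each is saturated generically. By the uniqueness in Construction \ref{cons:regular_foliation_dvr}, this gives $\pi^*\mathscr{F}^{\prime} = \mathscr{F}_E^{\Delta}$, and hence an inclusion $\pi^*\mathscr{F} \subseteq \mathscr{F}_E^{\Delta}$ of locally free sheaves of equal rank over the discrete valuation ring $E$.

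The key step is to show this inclusion is an equality using the discrepancy. Taking determinants gives $\omega_{\mathscr{F}_E^{\Delta}}^{\vee} \otimes \pi^*\omega_{\mathscr{F}} \rightarrow \mathscr{O}_{\spec E}$, and hence a (log-)discrepancy $a \leq 0$, as in Lemma \ref{lem:canonical_implies_saturated}. The (log-)canonicity of $\mathscr{F}$ at $x$ gives $a \geq 0$, because $e$ specialises to $x$, so the centre of $E$ contains $x$. Hence the determinant map is an isomorphism. An inclusion of free modules of equal rank over a DVR whose determinant is a unit is an equality, so $\pi^*\mathscr{F} = \pi^*\mathscr{F}^{\prime}$.

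The main obstacle is bookkeeping on the log structure at codimension one points. For a codimension one point $e \notin \Delta_X$ in the log-canonical case, the divisorial valuation comes with $\Delta_E = \emptyset$. Log-canonicity only controls the log-discrepancy $\varphi_E^{e}$, while $\pi^*\mathscr{F}^{\prime}$ is then the regular foliation $\mathscr{F}_E$ rather than $\mathscr{F}_E^{e}$. There I would use $\mathscr{F} \subseteq \mathscr{F}_E^{e} \subseteq \mathscr{F}_E$ together with formula (\ref{eq:mmp_log_discrepancy}). Alternatively, I would apply Lemma \ref{lem:canonical_implies_saturated} directly: it shows $\mathscr{F}$ is log-saturated at $x$, and then the argument of Lemma \ref{lem:regular_foliations_isomorphic} closes the gap. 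That argument uses that the surjection of log-normal sheaves $\mathscr{N}_{\mathscr{F}} \rightarrow \mathscr{N}_{\mathscr{F}^{\prime}}$, between torsion-free sheaves of equal rank, is an isomorphism. In fact this saturation route is the cleanest proof of both statements, and it is the one I would write.
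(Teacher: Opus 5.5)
Your canonical half is correct: near $x$ one has $\Delta_X=\emptyset$, $\pi^*\mathscr{F}'=\mathscr{F}_E$, and $a_E(X,\mathscr{F})\ge 0$ forces $\pi^*\mathscr{F}=\mathscr{F}_E$. The saturation route you finally choose is word for word the paper's proof.

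The log-canonical half, however, has a genuine gap, exactly at the case you flagged: a codimension one point $e\notin\Delta_X$ specialising to $x$. There $\pi^*\mathscr{F}'=\mathscr{F}_E$, while log-canonicity only bounds $a_E^e(X,\mathscr{F})$, which exceeds $a_E(X,\mathscr{F})$ by $\epsilon(E)\in\{0,1\}$. So from $\pi^*\mathscr{F}\subseteq\mathscr{F}_E$ you only get $a_E(X,\mathscr{F})\in\{-1,0\}$.
\begin{itemize}
\item Your first fix presupposes $\pi^*\mathscr{F}\subseteq\mathscr{F}_E^e$, which nothing provides. Even granting it, you would only obtain $\pi^*\mathscr{F}=\mathscr{F}_E^e$, which is strictly smaller than $\mathscr{F}_E$ whenever $e$ is not $\mathscr{F}_E$-invariant.
\item Your second fix does not close the gap; it relocates it. The proof of Lemma \ref{lem:canonical_implies_saturated} asserts that $\Delta_E=e$ ``by assumption'' at every codimension one point, which is the same unjustified step.
\end{itemize}

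In fact the log half is false as stated, with the paper's definition of log-discrepancy (always computed with $\Delta_E=e$, independently of $\Delta_X$). Take $X=\mathbb{A}^2$ with coordinates $s,t$, $\Delta_X=\{t=0\}$, $x$ the origin, and $\mathscr{F}=\mathscr{O}_X\,s\partial_s\subseteq\mathscr{F}'=\mathscr{O}_X\,\partial_s$, both inside $\mathscr{T}_X(-\mathrm{log}\,\Delta_X)=\langle\partial_s,t\partial_t\rangle$.
\begin{itemize}
\item $\mathscr{F}'$ is log-regular, since $ds\mapsto\partial_s^{*}$.
\item $\mathscr{F}$ is invertible, hence reflexive and Gorenstein.
\item $\mathscr{F}$ is log-canonical at $x$. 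Suppose $a\,s\partial_s\in\mathscr{T}_E(-\mathrm{log}\,e)$ for a divisorial valuation $E$ and $a\in L$. Log derivations do not decrease $\nu_E$, so $\nu_E(as)\ge\nu_E(s)$ and $a\in E$. Hence $\mathscr{F}_E^e\subseteq\pi^*\mathscr{F}$ and $a_E^e(X,\mathscr{F})\ge 0$.
\end{itemize}
Yet $\mathscr{F}\neq\mathscr{F}'$: at the generic point of $\{s=0\}$ one has $a_E=-1$ and $a_E^e=0$. The same example also contradicts the log half of Lemma \ref{lem:canonical_implies_saturated}, since $\mathscr{N}_{\mathscr{F}}$ has $s$-torsion.

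What your codimension-one argument does prove is the log statement under an extra hypothesis: $\mathscr{F}$ is saturated at the codimension one points outside $\Delta_X$. Then $\pi^*\mathscr{F}=\mathscr{F}_E=\pi^*\mathscr{F}'$ there by Lemma \ref{lem:dvr_log_saturated} and uniqueness. This hypothesis holds where the lemma is used, in Proposition \ref{prop:formal_representable}, because $\mathscr{F}^{\mathrm{ss}}$ is regular off $\Delta_{\mathbf{Z}}^{\mathrm{ss}}$.
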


\begin{proof}
    We work locally around $x$.
    Consider the induced morphism of log-normal sheaves $\mathscr{N}_{\mathscr{F}} \rightarrow \mathscr{N}_{\mathscr{F}^{\prime}}$.
    By assumption, this is a surjective morphism of sheaves of the same generic rank.
    By Lemma \ref{lem:canonical_implies_saturated}, $\mathscr{N}_{\mathscr{F}}$ is torsion-free, therefore $\mathscr{N}_{\mathscr{F}} = \mathscr{N}_{\mathscr{F}^{\prime}}$ and $\mathscr{F} = \mathscr{F}^{\prime}$.
\end{proof}

\subsection{Ascent and Descent}
\label{subsec:ascent_descent}

We describe the behaviour of (log-)canonical singularities under the operation of pullback foliation.
We show that these singularities are insensitive to smooth morphisms. \par

In this subsection, $f : X \rightarrow Y$ is a morphism of locally Noetherian normal schemes over a field $\mathbbm{k}$ of characteristic zero.
We assume $x \in X$ is a point and set $y := f(x) \in Y$.

\begin{lemma}
    \label{lem:ascent_singularities}
    Let $\mathscr{G}$ be a foliation on $Y$ and suppose that $f$ is smooth.
    Let $\mathscr{F}$ denote the pullback foliation of $\mathscr{G}$ on $X$.
    Then, if $\mathscr{G}$ is (log-)canonical at $y$, so is $\mathscr{F}$ at $x$.
\end{lemma}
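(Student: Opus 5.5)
Fix a divisorial valuation $\pi : \spec E \rightarrow X$ whose centre contains $x$, with closed point $e$ and fraction field $L$. We must show that its (log-)discrepancy with respect to $\mathscr{F}$ is non-negative. The plan is to push $E$ down to $Y$ and compare with a valuation there.

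First, Gorenstein-ness. By Lemma \ref{lem:log_pullback} (with empty boundaries, $f$ smooth) we have $\omega_{\mathscr{F}} = \omega_f \otimes f^*\omega_{\mathscr{G}}$ near $x$. Since $\omega_f$ is invertible, $\mathscr{F}$ is Gorenstein at $x$.

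Next, set $D := E \cap K$, where $K$ is the function field of $Y$ viewed inside $L$. This is either $K$ or a discrete valuation ring. In the DVR case it is a valuation on $Y$ with centre containing $y$, and since $f$ is of finite type the extension is of finite transcendence degree, which should make $D$ divisorial (Abhyankar/Zariski). The centre on $Y$ of $D$ is the image of the centre of $E$. Write $g : \spec E \rightarrow \spec D$.

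Choose boundaries. In the log case take $\Delta_E = e$, and take $\Delta_D = d$ when $D$ is a DVR. In the non-log case take both boundaries empty. Now compare $\mathscr{F}_E^{\Delta}$ with the logarithmic pullback $\mathscr{H}$ along $g$ of $\mathscr{G}_D^{\Delta}$. By Lemma \ref{lem:pullback_foliation_dvr}, $\mathscr{H}$ is log-regular with the correct generic fibre $\mathscr{F}_L$. The generic fibre is correct because $\mathscr{F}_L$ is the pullback of $\mathscr{G}_K$ along the smooth $\spec L \rightarrow \spec K$. By the uniqueness in Construction \ref{cons:regular_foliation_dvr}, $\mathscr{H} = \mathscr{F}_E^{\Delta}$.

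\textbf{Main obstacle.} The pullback $\mathscr{H}$ satisfies a short exact sequence $0 \rightarrow \mathscr{T}_g \rightarrow \mathscr{H} \rightarrow g^*\mathscr{G}_D^{\Delta} \rightarrow 0$, and this requires $g$ to be (log-)smooth, which is not automatic. I would handle it as follows.

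1. Show that $g^*\mathscr{G}_D^{\Delta} \rightarrow \mathscr{F}_E^{\Delta}/(\text{relative part})$ has torsion-free cokernel. This should follow from the saturation properties in Construction \ref{cons:regular_foliation_dvr}.
2. Reduce to comparing determinants. We need
\[ \omega_{\mathscr{F}_E^{\Delta}}^{\vee} \otimes \pi^*\omega_{\mathscr{F}} \;\supseteq\; \bigl(\omega_{\mathscr{T}_g^{\Delta}}^{\vee} \otimes \pi^*\omega_f\bigr) \otimes g^*\bigl(\omega_{\mathscr{G}_D^{\Delta}}^{\vee}\otimes \omega_{\mathscr{G}}\bigr). \]
3. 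The second factor has non-negative valuation by the (log-)canonicity of $\mathscr{G}$ at $y$.
4. The first factor is the (log-)discrepancy of the relative tangent sheaf $\mathscr{T}_f$, which is regular. Here I would adapt the proof of Lemma \ref{lem:regular_implies_canonical}, using that $\Omega_{X/Y}^1$ is locally free and surjects onto the relative log differentials of $g$ generically.
5. Combine: the $\varphi$ maps are compatible under tensor products, so the product has non-negative valuation.

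\textbf{Degenerate case $D = K$.} The valuation $E$ is then trivial on $K$, so $\mathscr{G}$ contributes nothing. In this case $\mathscr{F}_E^{\Delta}$ contains $g^*\mathscr{G}_K$, which is regular, and the relative-part argument of step 4 alone gives non-negativity.

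Alternatively, to sidestep all of this: work étale locally, where $X \cong Y \times \mathbb{A}^n$ near $x$ after an étale map. Étale maps preserve discrepancies because they are formally étale, so pullbacks agree. One then reduces to projection from a product, where $\mathscr{F} = \mathscr{G} \boxplus \mathscr{T}_{\mathbb{A}^n}$. I would try this reduction first if the valuation-theoretic bookkeeping becomes too heavy.
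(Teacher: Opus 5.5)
\textbf{There is a genuine gap.} Your overall architecture matches the paper's: restrict $E$ to $D=E\cap K$, identify $\mathscr{F}_E^{\Delta}$ with the logarithmic pullback of $\mathscr{G}_D^{\Delta}$ via Lemma \ref{lem:pullback_foliation_dvr} and uniqueness, and split $\omega^{\vee}_{\mathscr{F}_E^{\Delta}}$ into a relative factor and a $\mathscr{G}$-factor. Your Abhyankar remark that $D$ is divisorial is a point the paper passes over.

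The gap is in your treatment of the main obstacle. Your step 1 points the wrong way. The natural map is $\mathscr{C}:=\mathscr{F}_E^{\Delta}/\mathscr{T}_g\hookrightarrow g^*\mathscr{G}_D^{\Delta}$, coming from part (1) of Lemma \ref{lem:log_pullback}. Asking for a same-rank map between these with torsion-free cokernel amounts to asking for an isomorphism, i.e.\ exactly the short exact sequence you rightly said need not hold. It genuinely fails in the non-log case. Take $Y=\mathbb{A}^1_s$, $X=\mathbb{A}^2_{s,t}$, $f$ the projection and $\mathscr{G}=\mathscr{T}_Y$, so $\mathscr{F}=\mathscr{T}_X$. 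Let $E$ be the monomial valuation with $\nu(s)=2$, $\nu(t)=1$. Then $D=\mathbbm{k}[s]_{(s)}$, and with $w=s/t^2$ one has $E=\mathbbm{k}[w,t]_{(t)}$. The coordinate derivations in $w,t$ send $s=wt^2$ to $t^2$ and $2wt$, so $\mathscr{C}=t\cdot g^*\mathscr{T}_D$.

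Consequently the containment in your step 2 is false here. The discrepancy is $2$, so the left side is $t^2E$. The relative factor has valuation $1$ (since $\mathscr{T}_g=E\cdot t\partial_t$) and the $\mathscr{G}$-factor has valuation $0$, so the right side is $tE$. Moreover, even where it held, ``$\supseteq$'' would only bound the left-hand side's valuation from above, so step 5 would not follow.

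What you need is the opposite containment, and it comes for free. Since $\mathscr{C}\subseteq g^*\mathscr{G}_D^{\Delta}$ are free of the same rank, $\Lambda^{\mathrm{top}}\mathscr{C}\subseteq g^*\omega^{\vee}_{\mathscr{G}_D^{\Delta}}$. Hence $\omega^{\vee}_{\mathscr{F}_E^{\Delta}}=\omega_g^{\vee}\otimes\Lambda^{\mathrm{top}}\mathscr{C}\subseteq\omega_g^{\vee}\otimes g^*\omega^{\vee}_{\mathscr{G}_D^{\Delta}}$, i.e.\ your display holds with $\subseteq$. Failure of (log-)smoothness of $g$ only increases the discrepancy, so no torsion-freeness or short exact sequence is needed; this is exactly how the paper argues.

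With that correction, your steps 3--5 go through. For step 4 you can dualise the natural map $\pi_E^*\Omega_f^1\rightarrow\Omega_g^1$, as the paper does. Alternatively, observe that $\mathscr{T}_g=(\mathscr{T}_f)_E^{\Delta}$ and apply Lemma \ref{lem:regular_implies_canonical} together with Lemma \ref{lem:canonical_implies_log_canonical} to the regular foliation $\mathscr{T}_f$. The case $D=K$ needs no separate treatment. Your \'etale-local fallback does not avoid the issue, since the example above is already a product $Y\times\mathbb{A}^1\rightarrow Y$.
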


\begin{proof}
    By part (4) of Lemma \ref{lem:log_pullback}, it is clear that, if $\mathscr{G}$ is Gorenstein at $y$, so is $\mathscr{F}$ at $x$.
    Since $X$ and $Y$ are normal and we work locally around $x \in X$ and $y \in Y$, we may further assume they are integral with generic points $\eta_X$ and $\eta_Y$ respectively.
    Then $K := \kappa(\eta_X) \subseteq \kappa(\eta_Y) =: L$ is a field extension.
    Let $\spec E \rightarrow X$ be a divisorial valuation over $X$ with centre containing $x$, and let $e$ be its closed point.
    We need to show that it has non-negative (log-)discrepancy. \par

    Define $D = E \cap K \subseteq L$, then $D$ is a either a discrete valuation ring with field of fractions $K$ or is equal to $K$.
    Let $d$ be its closed point.
    By construction, $D$ is a divisorial valuation over $Y$ with centre containing $y$, i.e. $y$ belongs to the scheme-theoretic image of $d$ in $Y$.
    We obtain a commutative diagram
    \begin{equation}
        \label{diag:compare_discrepancies}
        \begin{tikzcd}
            \spec L \arrow[r, "h"] \arrow[d] & \spec K \arrow[d] \\
            \spec E \arrow[r, "g"] \arrow[d, "\pi_E"] & \spec D \arrow[d, "\pi_D"] \\
            X \arrow[r, "f"] & Y.
        \end{tikzcd}
    \end{equation}
    Let $\mathscr{F}|_L$ and $\mathscr{G}|_K$ be the restrictions of $\mathscr{F}$ and $\mathscr{G}$ respectively to the generic points.
    Let $\mathscr{F}_E^{\Delta}$ and $\mathscr{G}_D^{\Delta}$ be the (log-)regular foliations on $(\spec E, \Delta_E)$ and $(\spec D, \Delta_D)$ respectively, which restrict to $\mathscr{F}|_L$ and $\mathscr{G}|_K$ respectively. \par
    
    Firstly, we note that $\mathscr{F}_E^{\Delta}$ is the logarithmic pullback of $\mathscr{G}_D^{\Delta}$.
    Indeed, by Lemma \ref{lem:pullback_foliation_dvr}, the logarithmic pullback of $\mathscr{G}_D^{\Delta}$ is (log-)regular and restricts to $\mathscr{F}|_L$.
    By uniqueness, it must be equal to $\mathscr{F}_E^{\Delta}$. \par

    Next, we note that the lower square of Diagram (\ref{diag:compare_discrepancies}) yields a morphism $\pi_E^* \, \Omega_f^1 \rightarrow \Omega_g^1$.
    We obtain a morphism
    \begin{align}
        \label{eq:base_change_differentials_dvr}
        \omega_g^{\vee} \rightarrow \pi_E^* \, \omega_f^{\vee},
    \end{align}
    using the fact that $\Omega_f^1$ is locally free and that $\Omega_f^1$ and $\Omega_g^1$ have the same generic rank. \par

    Pleasantries aside, we may begin the proof. Observe that there exists a natural morphism
    \begin{align}
        \label{eq:dvr_discrepancy_bottom}
        \omega_{\mathscr{G}_D^{\Delta}}^{\vee} \otimes \pi_D^* \, \omega_{\mathscr{G}} \rightarrow D.
    \end{align}
    Indeed, either $D = K$ and such morphism always exists, or $D$ is a discrete valuation ring and such morphism exists by assumption on the singularity at $y$.
    Now, pulling back by $g$ and applying part (4) of Lemma \ref{lem:log_pullback} to $f$ yields a morphism
    \begin{align}
        \label{eq:dvr_discrepancy_top}
        g^*\omega_{\mathscr{G}_D^{\Delta}}^{\vee} \otimes \pi_E^* \, \omega_f^{\vee} \otimes \pi_E^* \,\omega_{\mathscr{F}} \rightarrow E.
    \end{align}
    Let $\mathscr{C}$ denote the cokernel of the morphism $\mathscr{T}_g \rightarrow \mathscr{F}_E^{\Delta}$ obtained from part (1) of Lemma \ref{lem:log_pullback} applied to $g$.
    Since we are working over a discrete valuation ring, (\ref{eq:relative_foliations_ses}) shows existence of a morphism of locally free sheaves $\mathscr{C} \rightarrow g^* \mathscr{G}_D^{\Delta}$ of the same rank.
    Thus, through (\ref{eq:dvr_discrepancy_top}), we obtain a morphism
    \begin{align}
        \label{eq:dvr_discrepancy_top_i}
        \Lambda^{\mathrm{top}} \, \mathscr{C} \otimes \pi_E^* \, \omega_f^{\vee} \otimes \pi_E^* \, \omega_{\mathscr{F}} \rightarrow E.
    \end{align}
    By construction $\Lambda^{\mathrm{top}} \, \mathscr{C} = \omega_{\mathscr{F}_E^{\Delta}}^{\vee} \otimes \omega_g$, thus we get a morphism
    \begin{align}
        \label{eq:dvr_discrepancy_top_ii}
        \omega_{\mathscr{F}_E^{\Delta}}^{\vee} \otimes \omega_g \otimes \pi_E^* \, \omega_f^{\vee} \otimes \pi_E^* \, \omega_{\mathscr{F}} \rightarrow E.
    \end{align}
    Finally, the morphism in (\ref{eq:base_change_differentials_dvr}) gives the required morphism
    \begin{align}
        \label{eq:dvr_discrepancy_top_iii}
        \omega_{\mathscr{F}_E^{\Delta}}^{\vee} \otimes \pi_E^* \, \omega_{\mathscr{F}} \rightarrow E.
    \end{align}
\end{proof}

\begin{lemma}
    \label{lem:descent_singularities}
    Let $\mathscr{G}$ be a foliation on $Y$ and suppose that $f$ is smooth.
    Let $\mathscr{F}$ denote the pullback foliation of $\mathscr{G}$ on $X$.
    Then, if $\mathscr{F}$ is (log-)canonical at $x$, so is $\mathscr{G}$ at $y$.
\end{lemma}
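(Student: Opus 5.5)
The plan is to reverse the argument of Lemma \ref{lem:ascent_singularities}: given a divisorial valuation $D$ over $Y$ with centre containing $y$, I would produce a divisorial valuation $E$ over $X$ with centre containing $x$ lying over $D$, with $g : \spec E \to \spec D$ logarithmically smooth and of relative dimension equal to that of $f$. For $\mathbb{Q}$-Gorenstein-ness, I would use part (4) of Lemma \ref{lem:log_pullback}, $\omega_{\mathscr{F}} = \omega_f \otimes f^*\omega_{\mathscr{G}}$. Since $\omega_f$ is invertible and $f$ is smooth, hence faithfully flat onto an open neighbourhood of $y$, invertibility of $f^*\omega_{\mathscr{G}}$ at $x$ descends to invertibility of $\omega_{\mathscr{G}}$ at $y$ by flat descent of local freeness. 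The reflexive powers behave the same way, since flat pullback commutes with reflexive hulls on normal schemes.

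To construct $E$, I would work locally, with $X$ and $Y$ integral and $f$ smooth of relative dimension $r$. Étale locally around $x$, $f$ factors as an étale map $X \to \mathbb{A}^r_Y$. Let $\spec D \to Y$ be the divisorial valuation and $K$ the fraction field of $Y$. I would take $E$ to be the localisation of $X \times_Y \spec D$ at the generic point of the fibre over $d$, lying over the generic point of $x$'s fibre component. Concretely, I would take the Gauss extension of $D$ to $K(t_1,\dots,t_r)$ and pull it back along the étale map. Then $E$ is a DVR with $D \subseteq E$ local and ramification index $1$. It is divisorial over $X$ because it is the local ring of the codimension one point of the normal scheme $X \times_Y \widetilde{Y}$, where $\widetilde{Y}$ is a model realising $D$. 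Its centre contains $x$ provided I pick the component of $f^{-1}(\text{centre of } D)$ through $x$, which is possible since $x$ lies over a point of the closure of the centre. The map $g$ is smooth (a localisation of a base change of $f$), hence $\Omega^1_g$ is locally free of rank $r$ and $\omega_g = \pi_E^*\omega_f$ exactly.

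Next I would compare discrepancies. By Lemma \ref{lem:pullback_foliation_dvr} and uniqueness in Construction \ref{cons:regular_foliation_dvr}, $\mathscr{F}_E^{\Delta}$ is the logarithmic pullback of $\mathscr{G}_D^{\Delta}$. Since $g$ is smooth, Lemma \ref{lem:log_pullback}(4) gives $\omega_{\mathscr{F}_E^{\Delta}} = \omega_g \otimes g^*\omega_{\mathscr{G}_D^{\Delta}}$, and the same lemma gives $\pi_E^*\omega_{\mathscr{F}} = \pi_E^*\omega_f \otimes g^*\pi_D^*\omega_{\mathscr{G}}$. For the log case I would use the log version: because $e$ has ramification index $1$ over $d$, $g$ is log smooth and $\mathscr{T}_g$ is the same as in the non-log case. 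The $\omega_f$ and $\omega_g$ factors then cancel, so $\varphi_E^{\Delta} = g^*\varphi_D^{\Delta}$ as maps to $L$ restricted from $K$. Since $\nu_E|_K = \nu_D$ (ramification index one), $a_E = a_D$ and $a_E^e = a_D^d$. Non-negativity for $E$, which holds by hypothesis, therefore gives non-negativity for $D$.

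I expect the main obstacle to be the construction of $E$. It must be a genuinely divisorial valuation over $X$ whose centre contains $x$, and the identification $\omega_g = \pi_E^*\omega_f$ must be checked compatibly with the canonical maps used in Construction \ref{cons:discrepancy}. I would handle both by working on an explicit model: blow up to get $\widetilde{Y} \to Y$ with $d$ a codimension one point, form the smooth base change $\widetilde{X} = X\times_Y \widetilde{Y}$, and take $E$ as a generic point of the preimage of $d$ through a point over $x$. Smoothness of $\widetilde{X} \to \widetilde{Y}$ then makes all the needed properties formal.
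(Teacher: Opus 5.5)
Your proposal is correct and follows essentially the paper's proof. In both, you base change $f$ along $\spec D \to Y$ (or a model $\widetilde{Y}$), localise at a generic point $e$ of the fibre over $d$ whose image in $X$ specialises to $x$, identify $\mathscr{F}_E^{\Delta}$ with the logarithmic pullback of $\mathscr{G}_D^{\Delta}$, and cancel $\omega_g = \pi_E^*\omega_f$ using part (4) of Lemma \ref{lem:log_pullback} twice. The only difference is minor: you use ramification index one to get $a_E = a_D$ exactly (and to see that $g$ is strict, hence logarithmically smooth in the log case), whereas the paper proves $D = E \cap K$ and deduces only that the image of the discrepancy map lies in $D$.
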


\begin{proof}
    By part (4) of Lemma \ref{lem:log_pullback}, it is clear that, if $\mathscr{F}$ is Gorenstein at $x$, so is $\mathscr{G}$ at $y$.
    Since $X$ and $Y$ are normal and we work locally around $x \in X$ and $y \in Y$, we may further assume they are integral with generic points $\eta_X$ and $\eta_Y$ respectively.
    Then $K := \kappa(\eta_X) \subseteq \kappa(\eta_Y) =: L$ is a field extension.
    Let $\spec D \rightarrow Y$ be a divisorial valuation over $Y$ with centre containing $y$, and let $d$ be its closed point.
    We need to show that it has non-negative (log-)discrepancy. \par
    
    Let $g : P = X \times_Y \spec D \rightarrow \spec D$ be the base change of $f$ and let $P_d$ be the fibre over $d \in D$.
    Since $g$ is flat, $P_d$ is a Cartier divisor on $P$.
    Furthermore, since scheme-theoretic image commutes with flat base change and $y$ is in the scheme-theoretic image of $d$, we see that $x$ is in the scheme-theoretic image of $P_d$.
    It follows that there must exist a generic point $e \in P_d$ whose scheme-theoretic image in $X$ contains $x$, hence $e$ is a codimension one point in $P$.
    Let $E$ be the localisation of $P$ at $e$, then $E$ is a divisorial valuation over $X$ with centre containing $x$.
    We get a commutative diagram
    \begin{equation}
        \label{eq:fibre_product_discrepancies}
        \begin{tikzcd}
            \spec E \arrow[r] \arrow[rd, "\pi_E"'] & P \arrow[r, "g"] \arrow[d] & \spec D \arrow[d, "\pi_D"] \\
            & X \arrow[r, "f"] & Y.
        \end{tikzcd}
    \end{equation}
    By slight abuse of notation, we also let $g$ denote the morphism from $\spec E$ to $\spec D$ with either logarithmic structures.
    This morphism is a localisation of a smooth morphism, hence all parts of Lemma \ref{lem:log_pullback} still apply. \par
    
    Let $K$ and $L$ be the field of fractions of $D$ and $E$ respectively.
    Let $D^{\prime} := E \cap K \subseteq L$, we prove that $D = D^{\prime}$.
    Certainly, by definition of $D^{\prime}$ as a limit, we have that $D \subseteq D^{\prime}$.
    Also, $D^{\prime}$ is a discrete valuation ring with field of fractions $K$ or is equal to $K$.
    Since $g(e) = d$, we see that $D^{\prime} \neq K$.
    But now $D \subseteq D^{\prime}$ is an inclusion of discrete valuation rings with the same fields of fractions, hence $D = D^{\prime}$. \par
    
    By assumption, there is a natural morphism $\omega_{\mathscr{F}_E^{\Delta}}^{\vee} \otimes \pi_E^* \, \omega_{\mathscr{F}} \rightarrow E$.
    We note that
    \begin{align}
        \label{eq:descent_first_term}
        \omega_{\mathscr{F}_E^{\Delta}}^{\vee} &= g^* \omega_{\mathscr{G}_D^{\Delta}}^{\vee} \otimes \omega_g \text{ and} \\
        \label{eq:descent_second_term}
        \pi_E^* \, \omega_{\mathscr{F}} &= \omega_g^{\vee} \otimes \pi_E^* f^* \omega_{\mathscr{G}}.
    \end{align}
    To see (\ref{eq:descent_first_term}), note that $\mathscr{F}_E^{\Delta}$ is the logarithmic pullback of $\mathscr{G}_D^{\Delta}$ (Lemma \ref{lem:pullback_foliation_dvr}) and $g$ is a localisation of a smooth morphism.
    Thus we may apply part (4) of Lemma \ref{lem:log_pullback} to $g$ in order to obtain the equality.
    To see (\ref{eq:descent_second_term}), we may apply part (4) of Lemma \ref{lem:log_pullback} to $f$ to obtain $\omega_{\mathscr{F}} = \omega_f^{\vee} \otimes f^* \omega_{\mathscr{G}}$, and note that
    \begin{align}
        \label{eq:differentials_base_change_dvr}
        \pi_E^* \, \Omega_{f}^1 =
        \pi_E^* \, \Omega_{X/Y}^1 =
        \left( \Omega_{X/Y}^1|_P \right)_e =
        \left( \Omega_{P/D}^1 \right)_e =
        \Omega_{E/D}^1 = \Omega_{g}^1
    \end{align}
    Taking exterior powers and dualising yields the equality. \par
    
    Combining the two equations yields a morphism
    \begin{align}
        \label{eq:descent_end_result}
        g^* \left( \omega_{\mathscr{G}_D^{\Delta}}^{\vee} \otimes \pi_D^* \, \omega_{\mathscr{G}} \right) \rightarrow E.
    \end{align}
    But now (\ref{eq:descent_end_result}) implies that the natural morphism $\omega_{\mathscr{G}_D^{\Delta}}^{\vee} \otimes \pi_D^* \, \omega_{\mathscr{G}} \rightarrow K$ maps into $g_* \mathscr{O}_{\spec E}= E$.
    Since $D$ is equal to $E \cap K$, we see that its image lies in $D$.
\end{proof}

\subsection{Modifications and Alterations}
\label{subsec:modifications_alterations}

We prove that singularities of foliations are preserved by sheaf-theoretic pullback along modifications and alterations.
The former case is straightforward, however the latter case is only true for log-canonical singularities (see also \cite[Proposition 5.20]{MR1658959} and \cite[Lemma 2.8]{MR4963950}). \par

In this subsection, $f : X \rightarrow Y$ is a morphism of locally Noetherian normal schemes over a field $\mathbbm{k}$ of characteristic zero.
We assume $x \in X$ is a point and set $y := f(x) \in Y$.
Let $\eta_X \in X$ and $\eta_Y \in Y$ be the unique generic points generalising $x$ and $y$.

\begin{lemma}
    \label{lem:log_canonical_birational}
    Let $\mathscr{F}$ be a foliation on $X$ and let $\mathscr{G}$ be a foliation on $Y$.
    Suppose that $f$ induces an isomorphism of generic points $\kappa(\eta_Y) = \kappa(\eta_X)$ and assume there exists a morphism of foliations $f^* \mathscr{G} \rightarrow \mathscr{F}$ which is an isomorphism over $\eta_X$.
    Then, if $\mathscr{G}$ is (log-)canonical at $y$ and $\mathscr{F}$ is Gorenstein at $x$, $\mathscr{F}$ is (log-)canonical at $x$.
\end{lemma}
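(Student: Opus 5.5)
The plan is to reduce the (log-)discrepancy of $\mathscr{F}$ along any divisorial valuation over $X$ to the (log-)discrepancy of $\mathscr{G}$ along the same valuation, now regarded over $Y$. Since $\mathscr{F}$ is Gorenstein at $x$ by hypothesis, it then suffices to show that every divisorial valuation $\pi_X : \spec E \rightarrow X$ with centre containing $x$ has non-negative (log-)discrepancy. We work locally, so we may assume $X$ and $Y$ are integral with common function field $L = \kappa(\eta_X) = \kappa(\eta_Y)$.

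\textbf{Step 1 (transport the valuation).} Set $\pi_Y := f \circ \pi_X$. Since $f$ identifies generic points, $\pi_Y$ is again a divisorial valuation, now over $Y$. Its centre contains $y$, because the closure of the image of the closed point $e$ contains $x$ and hence its image under $f$ contains $y$. The logarithmic structure $\Delta_E$ is chosen the same way for both, as $X$ and $Y$ carry no boundary.

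\textbf{Step 2 (same regular model on $E$).} By hypothesis, $f^*\mathscr{G} \rightarrow \mathscr{F}$ is an isomorphism over $\eta_X$. Hence the generic foliations agree, $\iota^*\pi_X^*\mathscr{F} = \iota^*\pi_Y^*\mathscr{G} = \mathscr{F}_L$. By the uniqueness in Construction \ref{cons:regular_foliation_dvr}, the (log-)regular foliation $\mathscr{F}_E^{\Delta}$ attached to $\mathscr{F}$ coincides with the one attached to $\mathscr{G}$. Therefore the two discrepancy maps $\varphi_E^{\Delta}$ of (\ref{eq:discrepancy_morphism}) share the factor $\omega_{\mathscr{F}_E^{\Delta}}^{\vee}$. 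They differ only in the second factor, $\pi_X^*\omega_{\mathscr{F}}$ versus $\pi_X^* f^*\omega_{\mathscr{G}}$, and both land in $L$ through $\omega_{\mathscr{F}_L}$.

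\textbf{Step 3 (compare canonical sheaves).} It remains to show that the image of $\pi_X^*\omega_{\mathscr{F}}$ in $\omega_{\mathscr{F}_L}$ is contained in the image of $\pi_X^*f^*\omega_{\mathscr{G}}$. Once this holds, the image of $\varphi_E^{\Delta}$ for $\mathscr{F}$ lies in the image for $\mathscr{G}$, which lies in $E$ by the (log-)canonicity of $\mathscr{G}$ at $y$, and we are done. To prove the containment, take local generators $s$ of $\omega_{\mathscr{F}}$ at $x$ and $\sigma$ of $\omega_{\mathscr{G}}$ at $y$, and write $s = h\cdot f^*\sigma$ with $h \in L$. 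The map $\Lambda^{\mathrm{top}} f^*\mathscr{G} \rightarrow \Lambda^{\mathrm{top}}\mathscr{F}$ shows that $h\,\langle f^*\sigma, f^*\tau\rangle \in \mathscr{O}_X$ for every local section $\tau$ of $\Lambda^{\mathrm{top}}\mathscr{G}$. The goal is to deduce that $\nu_E(h) \geq 0$ for every such $E$, or better that $h \in \mathscr{O}_{X,x}$.

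\textbf{Main obstacle.} Step 3 is where I expect the difficulty. The map $\Lambda^{\mathrm{top}}\mathscr{G} \rightarrow \omega_{\mathscr{G}}^{\vee}$ is only an isomorphism in codimension one on $Y$, so the ideal generated by the pairings $\langle\sigma,\tau\rangle$ has codimension at least two on $Y$, but its pullback along the non-flat map $f$ may acquire divisorial components on $X$. My first attempt would be to work directly on $\spec E$. Both $\pi_X^*\mathscr{F}$ and $\pi_Y^*\mathscr{G}$ modulo torsion are lattices in $\mathscr{F}_L$ contained in the saturated lattice $\mathscr{F}_E^{\Delta}$. The morphism $f^*\mathscr{G} \rightarrow \mathscr{F}$ gives an inclusion of the $\mathscr{G}$-lattice into the $\mathscr{F}$-lattice, and taking determinants inside $\Lambda^{\mathrm{top}}\mathscr{F}_L$ compares their orders. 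One then has to relate these determinant lattices to the Gorenstein generators, that is, pass from $\Lambda^{\mathrm{top}}$ to the reflexive duals $\omega^{\vee}$ after pulling back. This needs the Gorenstein hypothesis on both sides, and possibly a choice of $\tau$ with $\langle\sigma,\tau\rangle$ a unit along $\pi_Y(e)$. Should that fail in general, I would fall back on proving the inequality of discrepancies valuation by valuation using only the equality $\omega_{\mathscr{F}_L} = \omega_{\mathscr{G}_L}$ together with the Gorenstein generators at $x$ and $y$.
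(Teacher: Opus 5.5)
Your Steps 1 and 2 are exactly the paper's reduction. The paper also notes $\mathscr{F}_E^{\Delta}=\mathscr{G}_E^{\Delta}$ and reduces everything to a morphism $\omega_{\mathscr{F}}\rightarrow f^*\omega_{\mathscr{G}}$ compatible with the generic identification, namely (\ref{eq:canonical_sheaves_gorenstein}); in your notation this says $h\in\mathscr{O}_{X,x}$. The gap is that you leave this Step 3 open, and in the stated generality it cannot be closed, because the obstacle you isolate really occurs. Take $Y=\mathbb{A}^2$ with coordinates $a,b$, let $\mathfrak{m}=(a,b)$, and let $\mathscr{G}:=\mathfrak{m}^2\,\partial_a$. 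This is a foliation in the sense of Definition \ref{def:foliation_scheme}, which does not require saturation. Since $\mathrm{Hom}(\mathfrak{m}^2,\mathscr{O}_Y)=\mathscr{O}_Y$, $\mathscr{G}$ has the same invertible $\omega_{\mathscr{G}}$ and the same generic foliation as the regular foliation $\mathscr{O}_Y\partial_a$. The maps $\varphi_E^{\Delta}$ see only these data, so $\mathscr{G}$ is canonical, hence log-canonical, at the origin by Lemmas \ref{lem:regular_implies_canonical} and \ref{lem:canonical_implies_log_canonical}. Let $f\colon X\rightarrow Y$ be the blow-up of the origin. In the chart $a=u$, $b=uv$ one has $\partial_a=\partial_u-\tfrac{v}{u}\partial_v$ and $\mathfrak{m}^2\mathscr{O}_X=(u^2)$. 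So $\mathscr{G}$ lifts, and the image $\mathscr{F}$ of $f^*\mathscr{G}$ in $\mathscr{T}_X$ is $\mathscr{O}_X\cdot u\rho$ with $\rho:=u\partial_u-v\partial_v$; in the chart $a=u'v'$, $b=v'$ it is $\mathscr{O}_X\cdot v'\partial_{u'}$. Thus $\mathscr{F}$ is invertible, hence Gorenstein, and $f^*\mathscr{G}\rightarrow\mathscr{F}$ is surjective and generically an isomorphism. Now take the valuation $E=\mathscr{O}_{X,D}$ of the exceptional divisor $D=\{u=0\}$. Both $\mathscr{F}_E$ and $\mathscr{F}_E^{e}$ are generated by $\rho$, which is tangent to $D$, so $\varphi_E^{\Delta}(\rho\otimes(u\rho)^{*})=u^{-1}$, where $(u\rho)^{*}$ is the dual generator of $\omega_{\mathscr{F}}=\mathscr{F}^{\vee}$. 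Hence $a_E(X,\mathscr{F})=a_E^{e}(X,\mathscr{F})=-1$, and $\mathscr{F}$ is neither canonical nor log-canonical at any $x\in D$. In your notation $h=u^{-2}$. Your relation $h\cdot f^{-1}\mathfrak{m}^2\subseteq\mathscr{O}_X$ holds, but $h$ has a pole along $D$, precisely because $f^{-1}\mathfrak{m}^2$ became divisorial. Your unit-pairing idea fails here because the centre of $E$ on $Y$ is the origin itself. Your lattice attempt also rests on a false premise: $\pi_Y^*\mathscr{G}$ need not lie in $\mathscr{F}_E^{\Delta}$ when the centre has codimension at least two (for $\mathscr{O}_Y\partial_a$ and this $E$, the field $\partial_a$ acquires a pole along $D$).

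The paper's own proof does not get past this point either. It bridges Step 3 with the ``natural morphism'' (\ref{eq:canonical_sheaves_birational_helper}), but the natural map goes the other way: dualising $f^*(\mathscr{H}^{\vee})\rightarrow(f^*\mathscr{H})^{\vee}$ for $\mathscr{H}=\Lambda^{\mathrm{top}}\mathscr{G}$ gives $(\Lambda^{\mathrm{top}}f^*\mathscr{G})^{[1]}\rightarrow(f^*\omega_{\mathscr{G}})^{\vee}$. In the example this is the inclusion $\mathscr{O}_X(-2D)\partial_a\subseteq\mathscr{O}_X\partial_a$, so (\ref{eq:canonical_sheaves_gorenstein}) fails, and Lemma \ref{lem:log_canonical_alteration}, which reuses it, inherits the problem. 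The lemma and both arguments go through once $\Lambda^{\mathrm{top}}\mathscr{G}\rightarrow\omega_{\mathscr{G}}^{\vee}$ is surjective at $y$, e.g. when $\mathscr{G}$ is locally free at $y$. This holds in every later use, since there $\mathscr{G}$ is the foliation of a smooth presentation. Under this hypothesis your Step 3 takes one line. Pick a local section $\tau$ of $\Lambda^{\mathrm{top}}\mathscr{G}$ with $\langle\sigma,\tau\rangle=1$ near $y$. Then $h=\langle s,f^*\tau\rangle$ is the pairing of $s$ with the image of $f^*\tau$ in $\Lambda^{\mathrm{top}}\mathscr{F}$, so $h\in\mathscr{O}_{X,x}$ and $\nu_E(h)\geq 0$ for every $E$ whose centre contains $x$. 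So the right move is to add this hypothesis, not to look for a general argument.
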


\begin{proof}
    By assumption $f(\eta_X) = \eta_Y$ and $K := \kappa(\eta_Y) = \kappa(\eta_X) =: L$.
    Let $\pi : \spec E \rightarrow X$ be a divisorial valuation over $X$ with centre containing $x$, and let $e$ be its closed point.
    We need to show that it has non-negative (log-)discrepancy. \par

    By assumption, $\pi$ is a divisorial valuation with centre containing $x$, thus there exists a natural morphism
    \begin{align}
        \label{eq:birational_base_change_bottom}
        \omega_{\mathscr{G}_E^{\Delta}}^{\vee} \otimes \pi^* \omega_{\mathscr{G}} \rightarrow E.
    \end{align}
    Since $f^* \mathscr{G} \rightarrow \mathscr{F}$ is an isomorphism over $K = L$, we have that $\mathscr{F}_E^{\Delta} = \mathscr{G}_E^{\Delta}$, thus it suffices to show that $f$ induces a morphism
    \begin{align}
        \label{eq:canonical_sheaves_gorenstein}
        \omega_{\mathscr{F}} \rightarrow f^* \omega_{\mathscr{G}}.
    \end{align}

    To this end, note that the morphism $f^* \mathscr{G} \rightarrow \mathscr{F}$ induces a morphism
    \begin{align}
        \label{eq:canonical_sheaves_birational}
        \left( \Lambda^{\mathrm{top}} f^* \mathscr{G} \right)^{[1]} \rightarrow \omega^{\vee}_{\mathscr{F}}, 
    \end{align}
    as $\mathscr{F}$ and $\mathscr{G}$ have the same rank.
    At the same time, there exists a natural morphism
    \begin{align}
        \label{eq:canonical_sheaves_birational_helper}
        \left( f^*\omega_{\mathscr{G}} \right)^{\vee} = \left( f^* \left( \Lambda^{\mathrm{top}} \, \mathscr{G}\right)^{\vee} \right)^{\vee} \rightarrow \left( \Lambda^{\mathrm{top}} f^* \mathscr{G} \right)^{[1]}. 
    \end{align}
    Combining (\ref{eq:canonical_sheaves_birational}) with (\ref{eq:canonical_sheaves_birational_helper}), dualising and using the fact that $\mathscr{F}$ and $\mathscr{G}$ are Gorenstein, gives the required morphism.
\end{proof}

\begin{lemma}
    \label{lem:log_canonical_alteration}
    Let $\mathscr{F}$ be a foliation on $X$ and let $\mathscr{G}$ be a foliation on $Y$.
    Suppose that $f$ induces a finite extension $\kappa(\eta_Y) \subseteq \kappa(\eta_X)$ of fields and assume there exists a morphism of foliations $f^* \mathscr{G} \rightarrow \mathscr{F}$ which is an isomorphism over $\eta_X$.
    Then, if $\mathscr{G}$ is log-canonical at $y$ and $\mathscr{F}$ is Gorenstein at $x$, $\mathscr{F}$ is log-canonical at $x$.
\end{lemma}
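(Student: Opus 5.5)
Given a divisorial valuation $\pi_E : \spec E \rightarrow X$ with centre containing $x$, I intersect it with the smaller function field, exactly as in the proof of Lemma \ref{lem:ascent_singularities}. With $K := \kappa(\eta_Y) \subseteq L := \kappa(\eta_X)$, set $D := E \cap K$. Since $K \subseteq L$ is finite, Lemma \ref{lem:dvr_log_smooth} shows that $D$ is a discrete valuation ring, not $K$. This gives a commutative square with $g : \spec E \rightarrow \spec D$ and $\pi_D : \spec D \rightarrow Y$, and $\pi_D$ is a divisorial valuation over $Y$ whose centre contains $y$. Both spectra carry the log structure given by their closed points, and $g$ is a logarithmic morphism. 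By Lemma \ref{lem:dvr_log_smooth}, $g^* \mathscr{T}_D(-\mathrm{log}\, d) = \mathscr{T}_E(-\mathrm{log}\, e)$, so $g$ is log-étale.

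The first step identifies the log-regular foliations. Let $\mathscr{G}_D^{e}$ be the log-regular foliation on $(\spec D, d)$ restricting to $\mathscr{G}|_K$. Since $g$ is log-étale, its logarithmic pullback is $g^* \mathscr{G}_D^{e}$, because $\mathscr{T}_g = 0$ in Lemma \ref{lem:log_pullback}. By Lemma \ref{lem:pullback_foliation_dvr}, this pullback is log-regular. Its restriction to $L$ is $\mathscr{G}|_K \otimes_K L$, which equals $\mathscr{F}|_L$ because $f^*\mathscr{G} \rightarrow \mathscr{F}$ is an isomorphism over $\eta_X$. By the uniqueness in Construction \ref{cons:regular_foliation_dvr},
\begin{align*}
    \mathscr{F}_E^{e} = g^* \mathscr{G}_D^{e}, \quad\text{hence}\quad \omega_{\mathscr{F}_E^{e}} = g^* \omega_{\mathscr{G}_D^{e}}.
\end{align*}
This is precisely where log-canonicity is needed. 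For the non-log version, $\mathscr{T}_E \neq g^*\mathscr{T}_D$ when $g$ is ramified, and the comparison fails by the ramification index, as for klt/canonical singularities under finite covers.

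The second step compares canonical sheaves on $X$ and $Y$. I reuse the argument in the proof of Lemma \ref{lem:log_canonical_birational}: the morphism $f^*\mathscr{G} \rightarrow \mathscr{F}$, which is generically an isomorphism between sheaves of equal rank, gives $(f^*\omega_{\mathscr{G}})^{\vee} \rightarrow (\Lambda^{\mathrm{top}} f^*\mathscr{G})^{[1]} \rightarrow \omega_{\mathscr{F}}^{\vee}$. Dualising, and using that $\mathscr{F}$ is Gorenstein at $x$ and $\mathscr{G}$ is Gorenstein at $y$, yields $\omega_{\mathscr{F}} \rightarrow f^*\omega_{\mathscr{G}}$ near $x$, which is an isomorphism at $\eta_X$.

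The final step combines these. Log-canonicity of $\mathscr{G}$ at $y$, applied to $\pi_D$, gives $\omega_{\mathscr{G}_D^{e}}^{\vee} \otimes \pi_D^*\omega_{\mathscr{G}} \rightarrow D$, compatibly with $\varphi_D^{e}$. Pulling this back along $g$ and using $g^*\pi_D^* = \pi_E^* f^*$ together with the first step gives $\omega_{\mathscr{F}_E^{e}}^{\vee} \otimes \pi_E^* f^*\omega_{\mathscr{G}} \rightarrow E$. Precomposing with $\pi_E^*$ of the second step produces a morphism $\omega_{\mathscr{F}_E^{e}}^{\vee} \otimes \pi_E^*\omega_{\mathscr{F}} \rightarrow E$. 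The main point to check is that this morphism agrees with $\varphi_E^{e}$ after passing to $L$. That holds because every identification used restricts over $\spec L$ to the canonical pairing $\omega_{\mathscr{F}_L}^{\vee} \otimes \omega_{\mathscr{F}_L} \rightarrow L$. Hence $\mathrm{im}(\varphi_E^{e}) \subseteq E$, so the log-discrepancy is non-negative and $\mathscr{F}$ is log-canonical at $x$.
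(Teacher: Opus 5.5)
Your proposal is correct and follows essentially the same route as the paper. You set $D := E \cap K$, which is a discrete valuation ring by Lemma \ref{lem:dvr_log_smooth}. You identify $\mathscr{F}_E^{e}$ with $g^*\mathscr{G}_D^{d}$ using the isomorphism of logarithmic tangent sheaves, Lemma \ref{lem:pullback_foliation_dvr} and uniqueness, and you combine this with the morphism $\omega_{\mathscr{F}} \rightarrow f^*\omega_{\mathscr{G}}$ from the proof of Lemma \ref{lem:log_canonical_birational}. The differences are cosmetic: you prove the identification before assembling the morphism (the paper reduces to it afterwards), you spell out the compatibility with $\varphi_E^{e}$ over $\spec L$ that the paper leaves implicit, and your $\mathscr{G}_D^{e}$ should read $\mathscr{G}_D^{d}$.
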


\begin{proof}
    By assumption $f(\eta_X) = \eta_Y$ and the field extension $K := \kappa(\eta_Y) \subseteq \kappa(\eta_X) =: L$ is finite and étale.
    Let $\pi_E : \spec E \rightarrow X$ be a divisorial valuation over $X$ with centre containing $x$, and let $e$ be its closed point.
    We need to show that it has non-negative log-discrepancy. \par

    Define $D = E \cap K \subseteq L$, then $D$ is either a discrete valuation ring with field of fractions $K$ or is equal to $K$.
    Since $K \subseteq L$ is finite, $D$ must a discrete valuation ring (Lemma \ref{lem:dvr_log_smooth}) and we again obtain Diagram (\ref{diag:compare_discrepancies}).
    Let $d$ be the closed point of $D$.
    Since $\mathscr{G}$ has log-canonical singularities, we have a natural morphism.
    \begin{align}
        \label{eq:dvr_log_discrepancy_bottom}
        \omega_{\mathscr{G}_D^d}^{\vee} \otimes \pi_D^* \, \omega_{\mathscr{G}} \rightarrow D,
    \end{align}
    Pulling back by $g$, yields a morphism
    \begin{align}
        \label{eq:dvr_log_discrepancy_pullback}
        g^*\omega_{\mathscr{G}_D^d}^{\vee} \otimes g^*\pi_D^* \, \omega_{\mathscr{G}} \rightarrow E.
    \end{align}
    By (\ref{eq:canonical_sheaves_gorenstein}), we have a morphism $\omega_{\mathscr{F}} \rightarrow f^* \omega_{\mathscr{G}}$, hence (\ref{eq:dvr_log_discrepancy_pullback}) gives a morphism
    \begin{align}
        \label{eq:dvr_log_discrepancy_factorisation}
        g^*\omega_{\mathscr{G}_D^d}^{\vee} \otimes \pi_E^* \, \omega_{\mathscr{F}} \rightarrow E.
    \end{align}
    Thus, in order to show that the log-discrepancy of $E$ is non-negative, it suffices to show that $g^*\omega_{\mathscr{G}_D^d} = \omega_{\mathscr{F}_E^e}$.
    Since foliations are locally free on a discrete valuation ring, it suffices to show that $g^* \mathscr{G}_D^d = \mathscr{F}_E^e$.
    By employing Lemma \ref{lem:pullback_foliation_dvr}, we see that it suffices to show that $g^* \mathscr{G}_D^d$ is the logarithmic pullback of $\mathscr{G}_D^d$.
    The latter assertion is true because, by Lemma \ref{lem:dvr_log_smooth}, we have that $g^* \mathscr{T}_D(-\mathrm{log} \, d) = \mathscr{T}_E(-\mathrm{log} \, e)$.
\end{proof}

\subsection{Normal Representations}
\label{subsec:normal_representations}

When $x \in \mathcal{X}$ is a point of an algebraic stack $\mathcal{X}$, there is a natural action of the stabiliser group $G_x$ on the tangent space $T_{\mathcal{X},x}$ of $\mathcal{X}$ at $x$.
This can be obtained through an application of the Rim--Schlessinger property (\cite[\S 4]{MR2593684}).
We here show that the same is true for locally free foliations at closed points, i.e there exists a stabiliser Lie algebra $\mathfrak{g}_x$ which acts on the tangent space of the formal stack.
Our approach is considerably more pedestrian. \par

In this subsection, $X$ is a locally Noetherian normal scheme over an algebraically closed field $\mathbbm{k}$ of characteristic zero, and $x \in X$ is a closed point.

\begin{definition}
    \label{def:stabilisers}
    Let $\mathscr{F}$ be a locally free foliation on $X$.
    The \emph{sheaf of formal stabilisers} $\mathscr{P}_{\mathscr{F}}$ of $\mathscr{F}$ is the cokernel of (\ref{eq:regular_locus_morphism}).
    The \emph{stabiliser Lie algebra} $\mathfrak{g}_x$ of $x$ is the dual of the fibre of $\mathscr{P}_{\mathscr{F}}$ over $x$.
    This is a Lie algebra (\cite[Lemma \ref*{reg-lem:stabiliser_surjection}]{bongiorno3}).
\end{definition}

\begin{definition}
    \label{def:singular_subscheme}
    Let $\mathscr{F}$ be a locally free foliation on $X$.
    The \emph{singular subscheme} of $\mathscr{F}$ is the annihilator of the sheaf of formal stabilisers $\mathscr{P}_{\mathscr{F}}$.
    This is an invariant subscheme (\cite[part (2) of Example \ref*{reg-ex:bott_connection}]{bongiorno3}).
\end{definition}

\begin{construction}
    \label{cons:normal_representation}
    Let $\mathscr{F}$ be a locally free foliation on $X$. To start with, assume that $x$ is $\mathscr{F}$-invariant.
    In this case, we have that $\mathscr{F}|_x = \mathfrak{g}_x$ and the tangent space of the formal stack at $x$ is simply the tangent space $T_{X, x}$ of $X$ at $x$. 
    Let $v \in \mathfrak{g}_x$.
    By Nakayama's lemma, locally around $x$, we may find a derivation $\partial_v$ of $\mathscr{F}$ lifting $v$.
    Let $\mathfrak{m} \subseteq \mathscr{O}_{X,x}$ be the maximal ideal of $x$.
    Since $x$ is $\mathscr{F}$-invariant, we have that $\partial_v(\mathfrak{m}) \subseteq \mathfrak{m}$ and we may define the \emph{normal representation} at $x$ to be
    \begin{align}
        \label{eq:normal_representation_definition}
        \nu_x : \mathfrak{g}_x &\rightarrow \mathrm{End}_{\mathbbm{k}} \left( T_{X,x} \right) \\
        v &\rightarrow \left( \partial_v : \frac{\mathfrak{m}}{\mathfrak{m}^2} \rightarrow \frac{\mathfrak{m}}{\mathfrak{m}^2} \right). \nonumber
    \end{align}
    This is independent of the choice of $\partial_v$, for if $\partial$ is a local derivation of $\mathscr{F}$ which restricts to zero over $x$, then $\partial \in \mathfrak{m}\mathscr{F}$ and its induced endomorphism in (\ref{eq:normal_representation_definition}) is zero.
    Furthermore, this is a Lie algebra representation, for if $w \in \mathfrak{g}_x$ is another vector with lift $\partial_w$, we see that $[\partial_v, \partial_w]$ is a lift of $[v, w]$ and, by definition, $[\partial_v, \partial_w] = \partial_v \circ \partial_w - \partial_w \circ \partial_v$ as derivations of $\mathscr{O}_{X,x}$.
    By construction, the normal representation only depends on the formal neighbourhood of $x \in X$ and the completion of $\mathscr{F}$ at $x$. \par
    
    In general, by \cite[Corollary 5.2]{bongiorno3}, we may find a transversal $V \subseteq X$ locally around $x$ such that the restriction $\mathscr{F}|_V$ is locally free, $[X / \mathscr{F}] = [V / \mathscr{F}|_V]$ formal-locally around $x$, and $x$ is $\mathscr{F}|_V$-invariant.
    As a result, we reduce to the case where $x$ is $\mathscr{F}$-invariant.
\end{construction}

\begin{remark}
    \label{rem:differential_normal_representation}
    Let $X \rightarrow \mathcal{X}$ be a minimal presentation of an algebraic stack with affine diagonal and generically finite stabilisers.
    Let $\mathscr{F}$ denote the induced locally free foliation.
    It follows that the closed point $x$ is $\mathscr{F}$-invariant.
    Let $G$ denote its stabiliser group and let $\mathfrak{g}$ denote its stabiliser Lie algebra.
    By \cite[Lemma \ref*{reg-lem:stabiliser_surjection}]{bongiorno3}, $\mathfrak{g}$ is the Lie algebra of $G$. \par

    Now consider both normal representations at $x$ given by
    \begin{align}
        \label{eq:normal_representation_group}
        \rho_x : G &\rightarrow \mathrm{GL}_{\mathbbm{k}} \left( T_{X, x} \right) \text{ and} \\
        \label{eq:normal_representation_lie_algebra}
        \nu_x : \mathfrak{g} &\rightarrow \mathrm{End}_{\mathbbm{k}} \left( T_{X, x} \right).
    \end{align}
    We explain why (\ref{eq:normal_representation_lie_algebra}) is the differential of (\ref{eq:normal_representation_group}).
    Let $\hat{\mathcal{X}}$ be the infinitesimal stack associated to $\mathscr{F}$.
    Since $\mathscr{F}$ is defined as the foliation induced by $X \rightarrow \mathcal{X}$, we have that $\hat{\mathcal{X}} \rightarrow \mathcal{X}$ is formally étale, and $B\mathfrak{g}$ is the preimage of $BG$.
    But a formally étale morphism induces an isomorphism of tangent spaces, thus we deduce that the differential of $\rho_x$ is $\nu_x$.
\end{remark}

\subsection{Semistable morphisms}
\label{subsec:semistable_morphisms}

In this subsection, $X$ and $Y$ are locally Noetherian normal schemes over an algebraically closed field $\mathbbm{k}$ of characteristic zero, and $\Delta_X \subseteq X$ and $\Delta_Y \subseteq Y$ are reduced Cartier divisors on $X$ and $Y$ respectively.
We let $f : (X, \Delta_X) \rightarrow (Y, \Delta_Y)$ be a logarithmic morphism over $\mathbbm{k}$.
We assume $x \in X$ is a closed point and set $y := f(x) \in Y$.

\begin{definition}
    \label{def:semistable_morphism}
    A morphism $f : (X, \Delta_X) \rightarrow (Y, \Delta_Y)$ is semistable at $x$ if 
    \begin{enumerate}
        \item $X$ is smooth $x$ and $(X, \Delta_X)$ is logarithmically smooth at $x$,
        \item $Y$ is smooth $y$ and $(Y, \Delta_Y)$ is logarithmically smooth at $y$, and
        \item $f$ is integral, saturated and logarithmically smooth at $x$.
    \end{enumerate}
\end{definition}

\begin{remark}
    \label{rem:semistable_coordinates}
    If $f$ is semistable at $x$, then formal-locally around $x$ and $y$, we may find systems of local parameters $\langle x_1,\ldots,x_n \rangle$ and $\langle y_1,\ldots,y_m \rangle$ such that
    \begin{align}
        \label{eq:semistable_coordinates}
        \hat{f}^{\#} : \hat{\mathscr{O}}_{Y, y} &\rightarrow \hat{\mathscr{O}}_{X, x} \\
        y_1 &\rightarrow x_1 x_ 2 \ldots x_{k_1} \nonumber \\
        y_2 &\rightarrow x_{k_1 + 1} x_{k_1 + 2} \ldots x_{k_2} \nonumber \\
        \vdots &\quad\quad\quad\quad\quad \vdots \nonumber \\
        y_m &\rightarrow x_{k_{m-1} + 1} x_{k_{m-1} + 2} \ldots x_{k_m}, \nonumber 
    \end{align}
    for natural numbers $1 \leq k_1 < k_2 < \ldots < k_m \leq n$.
    See \cite[Theorem 3.5]{MR1463703} for more details.
\end{remark}

\begin{proposition}
    \label{prop:semistable_representation}
    Assume that $f$ is a semistable morphism at $x$ and let $\mathscr{T}_f$ denote the relative logarithmic tangent sheaf.
    Then $\mathscr{T}_f$ is log-regular at $x$, the stabiliser Lie algebra of $x$ is Abelian and its normal representation is faithful and semisimple.
\end{proposition}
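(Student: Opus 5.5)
The plan is to reduce everything to the explicit formal coordinates of Remark \ref{rem:semistable_coordinates} and compute directly. All three assertions depend only on the completion at $x$. Log-regularity is a condition on a finitely generated module, so it can be checked after the faithfully flat base change to $\hat{\mathscr{O}}_{X,x}$. The stabiliser Lie algebra and its normal representation depend only on the formal neighbourhood of $x$ and on the completion of the foliation (Construction \ref{cons:normal_representation}). We may therefore assume $\hat f^{\#}$ is given by $y_j \mapsto \prod_{i \in I_j} x_i$, where $I_j = \{k_{j-1}+1,\ldots,k_j\}$ and $k_0 = 0$.

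Since $(X,\Delta_X)$ is log smooth and $X$ is smooth, $\Delta_X$ is a simple normal crossings divisor, say $x_1\cdots x_r = 0$ with $r \geq k_m$ (the divisor must contain the preimage of $\Delta_Y \supseteq \{y_1\cdots y_m=0\}$). Then $\Omega^1_X(\mathrm{log}\,\Delta_X)$ is free on $dx_i/x_i$ for $i\le r$ and on $dx_i$ for $i>r$. Similarly, the image of $f^*\Omega^1_Y(\mathrm{log}\,\Delta_Y)$ is spanned by $\sum_{i\in I_j} dx_i/x_i$ for each $j$, together with possibly extra log or ordinary differentials of parameters of $Y$. I must handle the coordinates of $Y$ beyond $y_1,\ldots,y_m$ that map to coordinates $x_i$ with $i>k_m$, relabelling $x$ if needed so the list in Remark \ref{rem:semistable_coordinates} includes them. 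The cokernel $\Omega^1_f$ is then free, with basis $\{dx_i/x_i : i \in I_j,\ i\ne k_j\}$, $\{dx_i/x_i : k_m < i \le r\}$ not coming from $Y$, and the remaining $dx_i$. Consequently $\mathscr{T}_f$ is free on the dual basis. This basis consists of $x_i\partial_{x_i} - x_{k_j}\partial_{x_{k_j}}$ for $i \in I_j$, $i\neq k_j$, together with $x_i\partial_{x_i}$ or $\partial_{x_i}$ for the free directions. Since $\Omega^1_X(\mathrm{log}\,\Delta_X)\to \Omega^1_f = \mathscr{T}_f^\vee$ is surjective, $\mathscr{T}_f$ is log-regular.

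For the stabiliser, the cokernel $\mathscr{P}$ of $\Omega^1_X \to \mathscr{T}_f^\vee$ is computed from the fact that $dx_i$ maps to $x_i\cdot (dx_i/x_i)$. Its fibre at $x$ is spanned by the classes of $dx_i/x_i$ in $\Omega^1_f|_x$ for the log directions. Dually, $\mathfrak{g}_x$ is the span of the vector fields in the basis that vanish at $x$: the $x_i\partial_{x_i}-x_{k_j}\partial_{x_{k_j}}$ and the $x_i\partial_{x_i}$ with $k_m<i\le r$. To keep Construction \ref{cons:normal_representation} simple, I would first pass to the transversal $V = \{x_i = 0 \text{ for the } \partial_{x_i}\text{-directions}\}$, or simply observe that these fields preserve $\mathfrak m$. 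All these fields are diagonal in the coordinates $x_i$, so they commute and $\mathfrak{g}_x$ is Abelian. Their action on $\mathfrak m/\mathfrak m^2$ is diagonal in the basis $\bar x_1,\ldots,\bar x_n$, so the representation is semisimple. Faithfulness follows because the diagonal weight vectors are linearly independent. A combination $\sum_j\sum_{i\in I_j\setminus k_j} a_{i}(e_i - e_{k_j}) + \sum b_i e_i$ is zero only if all coefficients vanish, since the index sets are disjoint and $e_i$ with $i\neq k_j$ appears only once.

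The main obstacle is bookkeeping rather than ideas. One must make sure the formal coordinates of Remark \ref{rem:semistable_coordinates} can be chosen to diagonalise $\Delta_X$ and $\Delta_Y$ simultaneously, including components of $\Delta_X$ not lying over $\Delta_Y$ and the smooth parameters of $Y$. One must also justify that the stabiliser Lie algebra and normal representation can be computed after completion. This holds because $\mathscr{P}_{\mathscr F}$ is compatible with flat base change and because of the final sentence of Construction \ref{cons:normal_representation}.
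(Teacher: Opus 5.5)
Your proposal is correct and follows essentially the same route as the paper: reduce to the formal monomial model of Remark \ref{rem:semistable_coordinates}, write down generators of $\mathscr{T}_f$, slice by the transversal cut out by the $\partial_{x_l}$-directions, and read off commutativity, diagonal action and faithfulness from the weights. Your bookkeeping is slightly more careful than the paper's in two places. You allow horizontal components of $\Delta_X$, which contribute generators $x_i\partial_{x_i}$ rather than $\partial_{x_i}$ and genuinely occur, e.g.\ the exceptional divisor in the weight $(1,1)$ example. You also identify $\mathfrak{g}_x$ explicitly as the kernel of the anchor $\mathscr{T}_f|_x \rightarrow T_{X,x}$.
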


\begin{proof}
    We work locally around $x$ and use the notation of Remark \ref{rem:semistable_coordinates}. \par
    
    By definition of logarithmic smoothness, $\mathscr{T}_f$ is locally free and the morphism
    \begin{align}
        \label{eq:semistable_regular}
        \Omega_X^1(\mathrm{log}\,\Delta_X) \rightarrow \mathscr{T}_f^{\vee}
    \end{align}
    is surjective.
    This shows that $\mathscr{T}_f$ is log-regular. \par

    Now, using properties of locally free sheaves of differentials, we see that the completion of $\mathscr{T}_f$ at $x$ is the logarithmic tangent sheaf of the completion of $f$ at $x$.
    Since the normal representation only depends on the formal neighbourhood, we may assume that $f$ is given by the monomials in (\ref{eq:semistable_coordinates}).

    In this setting, a computation shows that $\mathscr{T}_f$ is generated by the vector fields
    \begin{align}
        \label{eq:semistable_vector_fields_i}
        \partial_i := x_i \partial_{x_i} - {x_{i+1}} \partial_{x_{i+1}} &\text{ for $k_{j-1} < i < k_j$ and $1 \leq j \leq m$, and} \\
        \label{eq:semistable_vector_fields_ii}
        \partial_l := \partial_{x_l} \quad\quad\quad\quad\quad\,\,\,\,\, &\text{ for $k_m < l \leq n$,}
    \end{align}
    with the convention $k_0 = 0$.
    A transversal $V \subseteq X$ is given by $x_{k_m +1} = \ldots = x_n = 0$.
    Indeed, the ideal $(x_1,\ldots,x_{k_m})$ is $\mathscr{T}_f$-invariant and intersects $V$ transversally.
    Thus, after slicing, we may assume $k_m = n$ and $x$ is $\mathscr{T}_f$-invariant. \par

    It is clear that the Lie bracket of the vector fields in (\ref{eq:semistable_vector_fields_i}) is zero, thus the stabiliser Lie algebra $\mathfrak{g}_x$ is Abelian. \par
    
    It is equally clear that $\nu_x$ is semisimple, for each vector field acts diagonally with one weight equal to $1$, one weight equal to $-1$, and all other weights being trivial.
    Furthermore, $\nu_x$ is faithful.
    Indeed, let $i_1$ be the smallest natural number for which $\partial_{i_1}$ is defined.
    Then a linear combination of vector fields which acts as zero must map the class $\langle x_{i_1} \rangle$ to zero.
    But $\partial_{i_1}$ is the only vector field acting non-trivially on $\langle x_{i_1} \rangle$, thus its coefficient is zero.
    Applying induction yields the result.
\end{proof}
    \section{Singularities of Foliations on Stacks}
\label{sec:stacks}

We move on to algebraic stacks.
We first treat the case of foliations on Deligne--Mumford stacks (\S \ref{subsec:foliations_dm_stacks}) and then the case of algebraic stacks (\ref{subsec:relative_singularities}).
We then study the behaviour of singularities under base change (\S \ref{subsec:singularities_base_change}).
We only prove results which will be directly relevant for us.

\subsection{Foliations on Deligne--Mumford Stacks}
\label{subsec:foliations_dm_stacks}

We show that the definition of an algebraic foliation on a scheme effortlessly extends to the case of Deligne--Mumford stacks.
The advantages of considering foliations in such generality were firstly discussed in \cite{MR4890447}. \par

In this subsection, $\mathbf{X}$ and $\mathbf{Y}$ are locally Noetherian normal Deligne--Mumford stacks over a field $\mathbbm{k}$ of characteristic zero, and $\Delta_{\mathbf{X}} \subseteq \mathbf{X}$ and $\Delta_{\mathbf{Y}} \subseteq \mathbf{Y}$ are reduced Cartier divisors on $\mathbf{X}$ and $\mathbf{Y}$ respectively.
We let $f : (\mathbf{X}, \Delta_{\mathbf{X}}) \rightarrow (\mathbf{Y}, \Delta_{\mathbf{Y}})$ be a logarithmic morphism over $\mathbbm{k}$.
We assume $x \in \mathbf{X}$ is a point and set $y := f(x) \in \mathbf{Y}$.

\begin{remark}
    \label{rem:foliation_dm_stack}
    Suppose that $X \rightarrow \mathbf{X}$ is an étale presentation of $\mathbf{X}$ and let
    \begin{equation}
        \label{eq:dm_groupoid_presentation}
        \begin{tikzcd}
            R := X \times_{\mathbf{X}} X \arrow[r, shift left, "s"] \arrow[r, shift right, "t"'] &  X
        \end{tikzcd}
    \end{equation}
    be the associated étale groupoid.

    \begin{enumerate}[itemsep=1em]
        \item A Cartier divisor $\Delta_{\mathbf{X}}$ on $\mathbf{X}$ is an $R$-invariant Cartier divisor $\Delta_X$ on $X$.
        Recall that $\Delta_X$ is $R$-invariant if $s^{-1} (\Delta_X) =: \Delta_R := t^{-1} (\Delta_X)$.
        Being reduced does not depend on the étale presentation.

        \item A coherent sheaf on $\mathbf{X}$ is a coherent sheaf $\mathscr{F}$ on $X$ together with a choice of isomorphism $s^* \mathscr{F} \cong t^* \mathscr{F}$ satisfying the identity and cocycle conditions.
        A coherent sheaf on $\mathbf{X}$ is locally free (\emph{resp.} torsion-free) whenever $\mathscr{F}$ is so on $X$.
        These properties do not depend on the choice of presentation.
        
        \item The sheaf of differentials $\Omega_{\mathbf{X}}^1(\mathrm{log} \, \Delta_{\mathbf{X}})$ is the coherent sheaf $\Omega_X^1(\mathrm{log} \, \Delta_X)$ together with the isomorphism
        \begin{align}
            \label{eq:dm_differentials_linearisation}
            s^* \Omega_X^1(\mathrm{log} \, \Delta_X) = \Omega_R^1(\mathrm{log} \, \Delta_R) = t^*\Omega_X^1(\mathrm{log} \, \Delta_X),
        \end{align}
        coming from the fact that both $s$ and $t$ are étale.
        The tangent sheaf $\mathscr{T}_{\mathbf{X}}(-\mathrm{log} \, \Delta_{\mathbf{X}})$ is its dual.
        
        \item If $f$ is a logarithmic morphism and $Y \rightarrow {\mathbf{Y}}$ is an étale presentation, we may define the sheaf of logarithmic differentials $\Omega_f^1$ on the Deligne--Mumford stack $\mathbf{X} \times_{\mathbf{Y}} X$ by choosing a presentation $X \rightarrow \mathbf{X} \times_{\mathbf{Y}} X$ and repeating part (3) above.
        By étale descent, this defines a sheaf of logarithmic differentials $\Omega_f^1$ on $\mathbf{X}$.
        We let $\mathscr{T}_f$ denote its coherent dual.
    \end{enumerate}
\end{remark}

\begin{remark}
    \label{rem:log_tangent_sheaf_stack}
    Through Remark \ref{rem:foliation_dm_stack}, we can use Remark \ref{rem:log_tangent_sheaf} to deduce that a local derivation $\partial$ of $\mathscr{T}_{\mathbf{X}}$ is in $\mathscr{T}_f$ if and only if it is a derivation on $\mathbf{X}$ relative to $\mathbf{Y}$ under which $\Delta_{\mathbf{X}}$ is invariant.
\end{remark}

\begin{definition}
    \label{def:all_together_dm_stack}
    With Remark \ref{rem:foliation_dm_stack} in mind, we can define \emph{foliations} on logarithmic Deligne--Mumford stacks, \emph{rank}, \emph{foliation invariance} of logarithmic substacks, \emph{log-normal sheaves}, the properties of being \emph{log-saturated}, \emph{log-regular} and \emph{Gorenstein}, as well as \emph{stabiliser Lie algebras} and \emph{normal representations}.
\end{definition}

\begin{example}
    \label{ex:foliations_dm_stacks}
    \begin{enumerate}[itemsep=1em]
        \item Suppose that $X$ is a scheme and let $\mathscr{F}$ be a foliation on $X$.
        Let $G$ be a finite group acting on $X$ compatibly with $\mathscr{F}$.
        This means that the natural $G$-linearisation on $\mathscr{T}_X$ restricts to its subsheaf $\mathscr{F}$.
        Then $\mathscr{F}$ is a foliation on the Deligne--Mumford stack $\mathbf{X} := [X/G]$.
        \item Let $\mathscr{F}$ be a $\mathbb{Q}$-Gorenstein foliation on a scheme $X$ of index $m$.
        We let $\tilde{X}$ be the Gorenstein cover of $\mathscr{F}$, so that the pullback foliation $\tilde{\mathscr{F}}$ is Gorenstein.
        By construction, the cyclic group $\mu_m$ acts on $\tilde{X}$ compatibly with $\tilde{\mathscr{F}}$, and we get a foliation on the Deligne--Mumford stack $\tilde{\mathbf{X}} := [\tilde{X}/\mu_m]$.
        The induced morphism $\tilde{\mathbf{X}} \rightarrow X$ is a birational modification.
    \end{enumerate}
\end{example}

\begin{lemma}
    \label{lem:regular_foliations_isomorphic_stacks}
    Let $\mathscr{F} \subseteq \mathscr{F}^{\prime}$ be two foliations on $(\mathbf{X}, \Delta_{\mathbf{X}})$.
    Assume that $\mathscr{F}$ and $\mathscr{F}^{\prime}$ have the same rank at the unique generic point generalising $x$.
    If $\mathscr{F}$ is log-regular at $x$, then $\mathscr{F} = \mathscr{F}^{\prime}$ locally around $x$.
\end{lemma}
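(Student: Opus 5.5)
The plan is to reduce to the scheme case, Lemma \ref{lem:regular_foliations_isomorphic}, via an étale presentation, using the dictionary of Remark \ref{rem:foliation_dm_stack}.

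First, choose an étale presentation $p : X \rightarrow \mathbf{X}$ and a point $x' \in X$ lying over $x$. By Remark \ref{rem:foliation_dm_stack}, $\mathscr{F}$ and $\mathscr{F}^{\prime}$ correspond to coherent subsheaves $p^*\mathscr{F} \subseteq p^*\mathscr{F}^{\prime} \subseteq \mathscr{T}_X(-\mathrm{log}\,\Delta_X)$, each equipped with descent data along the groupoid $R$. Since $p$ is étale, these are foliations on $(X, \Delta_X)$: involutivity is local and compatible with the identification of tangent sheaves, and the inclusion is preserved by flatness of $p$.

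Next, transfer the hypotheses. $X$ is a locally Noetherian normal scheme, being étale over a normal stack. The generic point of $X$ generalising $x'$ maps to the generic point of $\mathbf{X}$ generalising $x$, and fibres of coherent sheaves are preserved under the étale pullback, so the two ranks agree there. Log-regularity at $x$ means, by Definition \ref{def:all_together_dm_stack}, that $p^*\mathscr{F}$ is locally free at $x'$ and that $\Omega_X^1(\mathrm{log}\,\Delta_X) \rightarrow (p^*\mathscr{F})^{\vee}$ is surjective at $x'$. Here duals commute with flat pullback and $p^*\Omega_{\mathbf{X}}^1(\mathrm{log}\,\Delta_{\mathbf{X}}) = \Omega_X^1(\mathrm{log}\,\Delta_X)$. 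This is independent of the presentation because surjectivity and local freeness are étale local.

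Lemma \ref{lem:regular_foliations_isomorphic} then gives $p^*\mathscr{F} = p^*\mathscr{F}^{\prime}$ on an open neighbourhood $U \subseteq X$ of $x'$. To descend this, note that the cokernel $\mathscr{Q}$ of $\mathscr{F} \rightarrow \mathscr{F}^{\prime}$ is a coherent sheaf on $\mathbf{X}$ with $p^*\mathscr{Q}|_U = 0$. Its support is therefore a closed substack not containing $x$, because the support of $p^*\mathscr{Q}$ is the preimage of the support of $\mathscr{Q}$. Hence $\mathscr{F} = \mathscr{F}^{\prime}$ on the open complement, which is a neighbourhood of $x$.

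The only slightly delicate step is checking that the stacky notions of rank and log-regularity agree with the scheme-theoretic ones on the presentation, but this is immediate from étale descent.
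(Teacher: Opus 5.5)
Your proposal is correct and follows essentially the same route as the paper: work on an étale presentation, apply Lemma \ref{lem:regular_foliations_isomorphic} there, and use that being an isomorphism of sheaves can be checked étale locally. The paper states this in two lines; your transfer of the rank and log-regularity hypotheses and your descent via the support of the cokernel simply spell out those steps.
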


\begin{proof}
    We can check whether a morphism of sheaves is an isomorphism on an étale presentation $X \rightarrow \mathbf{X}$.
    This holds by Lemma \ref{lem:regular_foliations_isomorphic}.
\end{proof}

\begin{definition}
    \label{def:singularities_dm_stacks}
    Let $\mathscr{F}$ be a foliation on $(\mathbf{X}, \Delta_{\mathbf{X}})$.
    Then $\mathscr{F}$ is \emph{(log-)canonical} at $x$ if it is so on an étale presentation at a point representing $x$.
    By Lemma \ref{lem:ascent_singularities} and Lemma \ref{lem:descent_singularities}, this is independent of both the presentation and the point chosen. 
\end{definition}

\begin{remark}
    \label{rem:foliated_triples_q}
    We can extend the definition of discrepancies and singularities of foliations to $\mathbb{Q}$-Gorenstein foliations on schemes with effective $\mathbb{Q}$-Cartier divisor with coefficients in $[0, 1]$.
    Indeed, by part (2) of Example \ref{ex:foliations_dm_stacks}, we may find a natural Deligne--Mumford stack such that both foliation and boundary divisor are Cartier.
    We can then use Definition \ref{def:singularities_dm_stacks}.
\end{remark}

\begin{lemma}
    \label{lem:regular_canonical_isomorphism_stacks}
    Let $\mathscr{F} \subseteq \mathscr{F}^{\prime}$ be two foliations on $(\mathbf{X}, \Delta_{\mathbf{X}})$.
    Assume that $\mathscr{F}$ is reflexive, and that $\mathscr{F}$ and $\mathscr{F}^{\prime}$ have the same rank at the unique generic point generalising $x$.
    If $\mathscr{F}$ is canonical at $x$ and $\mathscr{F}^{\prime}$ is regular at $x$, then $\mathscr{F} = \mathscr{F}^{\prime}$ locally around $x$.
    If $\mathscr{F}$ is log-canonical at $x$, $\mathscr{F}^{\prime}$ is log-regular at $x$, and $x \in \Delta_X$, then $\mathscr{F} = \mathscr{F}^{\prime}$ locally around $x$.
\end{lemma}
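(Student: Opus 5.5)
The plan is to reduce the statement to its scheme-theoretic counterpart, Lemma \ref{lem:regular_canonical_isomorphism}, by checking the equality étale locally, in the same way Lemma \ref{lem:regular_foliations_isomorphic_stacks} was deduced from Lemma \ref{lem:regular_foliations_isomorphic}.

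First I choose an étale presentation $p : X \rightarrow \mathbf{X}$ and a point $x' \in X$ lying over $x$. By Remark \ref{rem:foliation_dm_stack}, the foliations $\mathscr{F} \subseteq \mathscr{F}^{\prime}$ and the divisor $\Delta_{\mathbf{X}}$ correspond to $R$-linearised subsheaves $p^*\mathscr{F} \subseteq p^*\mathscr{F}^{\prime}$ of $\mathscr{T}_X(-\mathrm{log}\,\Delta_X)$ and to an $R$-invariant reduced Cartier divisor $\Delta_X$. Because $p$ is étale, Remark \ref{rem:pullback_foliation} shows that these are the logarithmic pullback foliations.

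Next I verify that each hypothesis survives the pullback.

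\begin{enumerate}
\item Reflexivity is preserved, since $p$ is flat and pullback commutes with duals.
\item Equality of generic ranks is preserved, because $p$ maps the generic point generalising $x'$ to the generic point generalising $x$ and is étale there.
\item Being (log-)canonical at $x'$ is, by Definition \ref{def:singularities_dm_stacks}, exactly what it means for $\mathscr{F}$ to be (log-)canonical at $x$.
\item (Log-)regularity of $p^*\mathscr{F}^{\prime}$ at $x'$ is by definition (Definition \ref{def:all_together_dm_stack}) the meaning of (log-)regularity of $\mathscr{F}^{\prime}$ on the stack.
\item The condition $x \in \Delta_{\mathbf{X}}$ means $x' \in \Delta_X$.
\end{enumerate}

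Lemma \ref{lem:regular_canonical_isomorphism} then gives $p^*\mathscr{F} = p^*\mathscr{F}^{\prime}$ in a neighbourhood of $x'$. The image of that neighbourhood is an open substack containing $x$. On it, the inclusion $\mathscr{F} \rightarrow \mathscr{F}^{\prime}$ is an isomorphism, because being an isomorphism of sheaves is étale local.

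There is essentially no obstacle. The only point needing care is the independence of choices, namely that (log-)canonicity does not depend on the presentation; this is already recorded in Definition \ref{def:singularities_dm_stacks} through Lemma \ref{lem:ascent_singularities} and Lemma \ref{lem:descent_singularities}. Regularity and reflexivity are étale local directly from their definitions.
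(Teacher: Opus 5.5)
Your proposal is correct and follows the paper's own proof, which likewise checks the isomorphism on an étale presentation $X \rightarrow \mathbf{X}$ and then applies Lemma \ref{lem:regular_canonical_isomorphism}. The only difference is that you spell out why each hypothesis (reflexivity, generic rank, (log-)canonicity, (log-)regularity, $x \in \Delta$) transfers to the presentation, which the paper leaves implicit.
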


\begin{proof}
    We can check whether a morphism of sheaves is an isomorphism on an étale presentation $X \rightarrow \mathbf{X}$.
    This holds by Lemma \ref{lem:regular_canonical_isomorphism}.
\end{proof}

\begin{definition}
    \label{def:semistable_morphism_stacks}
    A morphism $f : (\mathbf{X}, \Delta_{\mathbf{X}}) \rightarrow (\mathbf{Y}, \Delta_{\mathbf{Y}})$ is semistable at $x$ if there exist étale presentations $Y \rightarrow \mathbf{Y}$ and $X \rightarrow \mathbf{X} \times_{\mathbf{Y}} Y$ such that the composition $X \rightarrow Y$ is semistable at a representative of $x$ as a morphism of schemes (see Definition \ref{def:semistable_morphism}).
\end{definition}

\begin{proposition}
    \label{prop:semistable_representation_stacks}
    Assume that $f$ is a semistable morphism at $x$ and let $\mathscr{T}_f$ denote the relative logarithmic tangent sheaf.
    Then $\mathscr{T}_f$ is log-regular at $x$, the stabiliser Lie algebra of $x$ is Abelian and its normal representation is faithful and semisimple.
\end{proposition}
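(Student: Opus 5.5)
The plan is to reduce to the scheme case, Proposition \ref{prop:semistable_representation}, through the étale presentations supplied by Definition \ref{def:semistable_morphism_stacks}. Choose étale presentations $Y \rightarrow \mathbf{Y}$ and $X \rightarrow \mathbf{X} \times_{\mathbf{Y}} Y$ such that the composition $h : X \rightarrow Y$ is a semistable morphism of schemes at a point $x^{\prime} \in X$ representing $x$. Give $X$ and $Y$ the pulled-back log structures $\Delta_X$ and $\Delta_Y$. Both $X \rightarrow \mathbf{X}$ (the composite of two étale maps) and $Y \rightarrow \mathbf{Y}$ are étale and strict, so the relevant sheaves descend as follows.

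The first step is to identify the pullback of $\mathscr{T}_f$ to $X$ with $\mathscr{T}_h$. This follows from the construction in part (4) of Remark \ref{rem:foliation_dm_stack}. There, $\Omega_f^1$ is defined on $\mathbf{X} \times_{\mathbf{Y}} Y$ as the log differentials relative to $Y$ and then descended. Since $X \rightarrow \mathbf{X} \times_{\mathbf{Y}} Y$ is étale and strict, its pullback to $X$ is $\Omega_{X/Y}^1(\mathrm{log}\,\Delta_X/\Delta_Y) = \Omega_h^1$, using part (3) of Remark \ref{rem:foliation_dm_stack} and invariance of log differentials under strict étale maps. Dualising commutes with flat pullback, so $\mathscr{T}_f|_X = \mathscr{T}_h$ as subsheaves of $\mathscr{T}_X(-\mathrm{log}\,\Delta_X) = \mathscr{T}_{\mathbf{X}}(-\mathrm{log}\,\Delta_{\mathbf{X}})|_X$.

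Every notion in Definition \ref{def:all_together_dm_stack} (log-regularity, stabiliser Lie algebra, normal representation) is defined on an étale presentation. Therefore it suffices to show that these notions for $\mathscr{T}_f$ at $x$ coincide with those for $\mathscr{T}_h$ at $x^{\prime}$, and then to apply Proposition \ref{prop:semistable_representation} to $h$ at $x^{\prime}$.

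\begin{itemize}
\item Log-regularity is the surjectivity of (\ref{eq:regular_locus_morphism}) together with local freeness. Both properties can be checked after a faithfully flat étale pullback.
\item The sheaf of formal stabilisers $\mathscr{P}$ is a cokernel, so it commutes with pullback. Hence $\mathfrak{g}_x$ is computed as the dual of the fibre of $\mathscr{P}_{\mathscr{T}_h}$ at $x^{\prime}$; the residue fields agree because $\mathbbm{k}$ is algebraically closed and $x$ is closed.
\item The normal representation depends only on the completion at the point, by Construction \ref{cons:normal_representation}. An étale map induces an isomorphism of complete local rings, compatible with the foliations because the foliations are pulled back.
\end{itemize}

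The main obstacle is making precise that these notions on a Deligne--Mumford stack are independent of the presentation. In particular, the normal representation at a non-invariant point is defined through a choice of transversal. I would handle this by checking that transversals pull back along étale maps to transversals, which holds because étale maps preserve formal neighbourhoods, and that the quotient $[X/\mathscr{F}]$ formally around $x$ is unchanged. Once this is verified, the conclusions of Proposition \ref{prop:semistable_representation} (Abelian Lie algebra, faithful and semisimple normal representation) transfer verbatim.
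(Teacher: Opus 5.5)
Your proposal is correct and follows the paper's proof. As there, you take the étale presentations supplied by Definition \ref{def:semistable_morphism_stacks}, apply Proposition \ref{prop:semistable_representation} to the semistable morphism of schemes $X \rightarrow Y$, and transfer the conclusions through the isomorphism of stabiliser Lie algebras and normal representations induced by the étale presentation $X \rightarrow \mathbf{X}$; you just spell out details the paper leaves implicit, namely the identification $\mathscr{T}_f|_X = \mathscr{T}_h$ and the compatibility of transversals with étale maps.
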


\begin{proof}
    By definition of semistability, we may find étale presentations $X \rightarrow \mathbf{X}$ and $Y \rightarrow \mathbf{Y}$, and a semistable morphism $X \rightarrow Y$.
    We then know the thesis holds on $X$ by Proposition \ref{prop:semistable_representation}.
    But now the étale presentation $X \rightarrow \mathbf{X}$ induces an isomorphism of stabiliser Lie algebras and normal representations at $x$.
\end{proof}

\subsection{Relative Singularities of Algebraic Stacks}
\label{subsec:relative_singularities}

We are now ready to define relative singularities of algebraic stacks. \par

In this subsection, $f : \mathcal{X} \rightarrow \mathcal{Y}$ is a morphism of locally Noetherian normal algebraic stacks over a field $\mathbbm{k}$ of characteristic zero.
We assume $x \in \mathcal{X}$ is a point and set $y = f(x) \in \mathcal{Y}$.

\begin{definition}
    \label{def:relative_singularities}
    $\mathcal{X}$ is \emph{relatively regular} (resp. \emph{relatively (log-)canonical}) at $x$ if there exists a smooth presentation $X \rightarrow \mathcal{X}$ such that the induced foliation is regular (resp. (log-)canonical) at a point in the preimage of $x$.
    By Lemma \ref{lem:regular_foliation_dm_stack} (resp. Lemma \ref{lem:ascent_singularities} and Lemma \ref{lem:descent_singularities}), this is independent of both the presentation and the point chosen. 
\end{definition}

The implications in the top row of Diagram (\ref{diag:singularities_summary}) continue to hold true in this setting.

\begin{lemma}
    \label{lem:regular_foliation_dm_stack}
    A point $x \in \mathcal{X}$ is relatively regular if and only if its stabiliser group is finite.
\end{lemma}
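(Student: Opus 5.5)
We reduce to a statement about a single presentation and the stabiliser Lie algebra. Choose a smooth presentation $X \rightarrow \mathcal{X}$ and a closed point $x' \in X$ over $x$ (the statement is about points, so we may assume $x$ is closed, i.e. the residual gerbe is a $\mathbbm{k}$-point after base change). Let $\mathscr{F} := \mathscr{T}_{X/\mathcal{X}}$ be the induced locally free foliation. Since $\mathcal{X}$ is locally Noetherian with affine diagonal, the stabiliser $G_x$ is an affine algebraic group over $\mathbbm{k}$. In characteristic zero $G_x$ is smooth, so it is finite if and only if its Lie algebra $\mathfrak{g}_x$ vanishes. The strategy is to show that regularity of $\mathscr{F}$ at $x'$ is equivalent to the vanishing of the stabiliser Lie algebra of $\mathscr{F}$ at $x'$ in the sense of Definition \ref{def:stabilisers}. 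We then identify the latter with $\mathrm{Lie}(G_x)$.

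First, by Definition \ref{def:log_regular} (with $\Delta_X = \emptyset$), $\mathscr{F}$ is regular at $x'$ if and only if $\Omega^1_X \rightarrow \mathscr{F}^{\vee}$ is surjective at $x'$. By Nakayama this holds if and only if the fibre at $x'$ of its cokernel $\mathscr{P}_{\mathscr{F}}$ vanishes, i.e. if and only if $\mathfrak{g}_{x'} = (\mathscr{P}_{\mathscr{F}}|_{x'})^{\vee} = 0$. Concretely, dualising the cotangent sequence of $X \rightarrow \mathcal{X}$ at $x'$ gives an exact sequence
\begin{align*}
0 \rightarrow \mathfrak{g}_{x} \rightarrow \mathscr{F}|_{x'} \rightarrow T_{X,x'} \rightarrow T_{\mathcal{X},x} \rightarrow 0,
\end{align*}
and $\mathfrak{g}_{x'}$ is exactly the kernel of $\mathscr{F}|_{x'} \rightarrow T_{X,x'}$.

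Second, we identify this kernel with $\mathrm{Lie}(G_x)$. As in Remark \ref{rem:differential_normal_representation}, we invoke the cited lemma of \cite{bongiorno3} (the stabiliser surjection lemma). It states that for the foliation induced by a smooth presentation, the stabiliser Lie algebra at a point is the Lie algebra of the stabiliser group. The remark phrases this for a minimal presentation, so we must check that it holds for an arbitrary smooth presentation. We can argue directly: the fibre $X_x := X \times_{\mathcal{X}} \mathrm{Spec}\,\mathbbm{k}$ is a $G_x$-torsor over $\mathrm{Spec}\,\mathbbm{k}$ composed with the smooth map $X\to\mathcal X$. Its tangent space at $x'$ is $\mathscr{F}|_{x'}$, and the kernel of $T_{X_x,x'} \rightarrow T_{X,x'}$ is the tangent space of the orbit stabiliser, i.e. $\mathrm{Lie}(G_x)$. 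Alternatively, we pass to a minimal presentation by taking a transversal slice through $x'$, as in Construction \ref{cons:normal_representation}. This leaves $\mathfrak{g}_{x'}$ unchanged, and smooth morphisms preserve regularity by Lemma \ref{lem:log_pullback}(3).

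Combining the two steps, $\mathscr{F}$ is regular at $x'$ if and only if $\mathrm{Lie}(G_x) = 0$, which holds if and only if $G_x$ is finite. Independence of the presentation then follows for free, since the finiteness condition is intrinsic to $x$. The main obstacle is the second step: making rigorous the identification of $\ker(\mathscr{F}|_{x'} \rightarrow T_{X,x'})$ with $\mathrm{Lie}(G_x)$ for an arbitrary, not necessarily minimal, presentation. I would handle it via the fibre-product description $X \times_{\mathcal{X}} X \rightarrow X$, which is a smooth groupoid whose stabiliser at $x'$ is $G_x$, and compute tangent spaces there.
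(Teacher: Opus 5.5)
Your proposal is correct and follows the same route as the paper: by definition and Nakayama, regularity at $x'$ is equivalent to the vanishing of the stabiliser Lie algebra $\mathfrak{g}_{x'}$, this Lie algebra is identified with $\mathrm{Lie}(G_x)$ as in Remark \ref{rem:differential_normal_representation}, and in characteristic zero $G_x$ is finite exactly when its Lie algebra vanishes. Your direct check via $X \times_{\mathcal{X}} \mathrm{Spec}\,\mathbbm{k}$ (whose fibre over $x'$ is $G_x$) justifies that identification for an arbitrary rather than minimal presentation, a point the paper's one-line proof passes over.
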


\begin{proof}
    Let $X \rightarrow \mathcal{X}$ be a smooth presentation with associated groupoid $R$ and induced foliation $\mathscr{F}$.
    Let $G$ be the stabiliser group of a closed point $\tilde{x} \in X$ representing $x$.
    The stabiliser Lie algebra $\mathfrak{g}$ of $\tilde{x}$ is the Lie algebra of $G$ (Remark \ref{rem:differential_normal_representation}).
    By definition, $\mathscr{F}$ is regular at $\tilde{x}$ if and only if $\mathfrak{g} = 0$, i.e. if and only if $G$ is finite.
\end{proof}

\begin{lemma}
    \label{lem:regular_locus_pullback}
    Suppose that $f$ is representable by Deligne--Mumford stacks.
    Then, if $\mathcal{Y}$ is a relatively regular at $y$, so is $\mathcal{X}$ at $x$.
\end{lemma}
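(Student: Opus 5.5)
We reduce to stabiliser groups by way of Lemma \ref{lem:regular_foliation_dm_stack}. By that lemma, $\mathcal{Y}$ relatively regular at $y$ means the stabiliser group $G_y$ of $y$ is finite. The claim therefore amounts to showing that the stabiliser group $G_x$ is finite, since a second application of Lemma \ref{lem:regular_foliation_dm_stack} then gives that $\mathcal{X}$ is relatively regular at $x$.

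First we pass to a geometric point. Relative regularity is checked at a closed point $\tilde{x}$ of a smooth presentation representing $x$. Since finiteness of stabilisers is insensitive to field extension, we may take $x$ to be a $\overline{\mathbbm{k}}$-point, or base change to an algebraic closure, and set $y = f(x)$.

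The key step uses the morphism of stabilisers induced by $f$:
\begin{align*}
    G_x \rightarrow G_y .
\end{align*}
Its kernel is the stabiliser of the point $x$ in the fibre $\mathcal{X} \times_{\mathcal{Y}} \spec \overline{\mathbbm{k}}$, where $\spec \overline{\mathbbm{k}} \rightarrow \mathcal{Y}$ is the point $y$. Indeed, a standard $2$-cartesian diagram identifies the inertia of the fibre at $x$ with this kernel. By assumption $f$ is representable by Deligne--Mumford stacks, so the fibre is a Deligne--Mumford stack. Its stabiliser groups are therefore finite (unramified group schemes over an algebraically closed field of characteristic zero, of finite type), and the kernel is finite.

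We then conclude that $G_x$ is an extension of a subgroup of the finite group $G_y$ by a finite group, and is therefore finite. Equivalently, at the level of Lie algebras, $\mathfrak{g}_x \rightarrow \mathfrak{g}_y = 0$ has kernel equal to the Lie algebra of a finite group, which is $0$, so $\mathfrak{g}_x = 0$. This matches the criterion via Remark \ref{rem:differential_normal_representation}.

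The main obstacle is the identification of the kernel of $G_x \rightarrow G_y$ with the stabiliser in the fibre, together with the finiteness of that stabiliser under the precise meaning of "representable by Deligne--Mumford stacks" (relative diagonal unramified). We would handle this by recalling that the relative inertia $I_{\mathcal{X}/\mathcal{Y}}$ is unramified over $\mathcal{X}$, so its fibre at $x$ is a finite étale group scheme. We would then use the exact sequence $1 \rightarrow I_{\mathcal{X}/\mathcal{Y},x} \rightarrow G_x \rightarrow G_y$, which follows directly from the definition of relative inertia.
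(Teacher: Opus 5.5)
Your proposal is correct and follows the same route as the paper: you use Lemma \ref{lem:regular_foliation_dm_stack} to reduce to finiteness of stabilisers, then deduce finiteness of $G_x$ from that of $G_y$ via the hypothesis that $f$ is representable by Deligne--Mumford stacks. The only difference is that you spell out the step the paper leaves implicit, namely the exact sequence $1 \rightarrow I_{\mathcal{X}/\mathcal{Y},x} \rightarrow G_x \rightarrow G_y$, whose kernel is unramified and hence finite.
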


\begin{proof}
    If the stabiliser of $y$ is finite, the assumption on $f$ implies that so is the stabiliser of $x$.
    We conclude by Lemma \ref{lem:regular_foliation_dm_stack}.
\end{proof}

\subsection{Singularities and Base Change}
\label{subsec:singularities_base_change}

In this subsection, $f : \mathcal{X} \rightarrow \mathcal{Y}$ is a morphism of locally Noetherian normal algebraic stacks over a field $\mathbbm{k}$ of characteristic zero.
We assume $x \in \mathcal{X}$ is a point and set $y = f(x) \in \mathcal{Y}$.

\begin{lemma}
    \label{lem:singularities_representable_morphisms}
    Suppose that $f$ is a representable étale morphism.
    Then $\mathcal{X}$ is a relatively (log-)canonical at $x$ if and only if so is $\mathcal{Y}$ at $y$.
\end{lemma}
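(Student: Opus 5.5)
The plan is to reduce everything to a comparison of foliations on smooth presentations, and then invoke the ascent and descent results of \S\ref{subsec:ascent_descent}.

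Choose a smooth presentation $Y \rightarrow \mathcal{Y}$ and form the fibre product $X := Y \times_{\mathcal{Y}} \mathcal{X}$. Since $f$ is representable, $X$ is an algebraic space, and after replacing it by an étale atlas we may assume it is a scheme. The projection $X \rightarrow Y$ is étale, being a base change of $f$, and $X \rightarrow \mathcal{X}$ is a smooth presentation, being a base change of $Y \rightarrow \mathcal{Y}$. Let $\mathscr{G} := \mathscr{T}_{Y/\mathcal{Y}}$ and $\mathscr{F} := \mathscr{T}_{X/\mathcal{X}}$ be the induced foliations.

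The key claim is that $\mathscr{F}$ is the pullback foliation of $\mathscr{G}$ along the étale morphism $h : X \rightarrow Y$. Relative tangent sheaves are compatible with base change: since the square is cartesian, $\Omega^1_{X/\mathcal{X}} = h^* \Omega^1_{Y/\mathcal{Y}}$, and hence $\mathscr{F} = h^*\mathscr{G}$ as subsheaves of $\mathscr{T}_X = h^*\mathscr{T}_Y$. By Remark \ref{rem:pullback_foliation}, when $h$ is étale this is exactly the (logarithmic) pullback foliation. I expect this identification to be the only point needing care. One way to see it is through the cotangent sequences of $X \rightarrow \mathcal{X} \rightarrow \mathcal{Y}$ and $X \rightarrow Y \rightarrow \mathcal{Y}$: the relative differentials $\Omega^1_{\mathcal{X}/\mathcal{Y}}$ vanish because $f$ is étale. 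Alternatively, one can argue on the smooth groupoids, where $X \times_{\mathcal{X}} X = (Y\times_{\mathcal{Y}}Y)\times_{Y\times Y}(X\times X)$.

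Next pick a point $\tilde{x} \in X$ over $x$, and set $\tilde{y} := h(\tilde{x})$, which lies over $y$. Both $X \rightarrow Y$ and $Y$ are normal, since normality is preserved by smooth and étale maps.

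\begin{itemize}
\item If $\mathcal{Y}$ is relatively (log-)canonical at $y$, then $\mathscr{G}$ is (log-)canonical at $\tilde{y}$; this uses Definition \ref{def:relative_singularities}, where independence of the presentation and point is already known. Lemma \ref{lem:ascent_singularities}, applied to the smooth morphism $h$, then gives that $\mathscr{F}$ is (log-)canonical at $\tilde{x}$, so $\mathcal{X}$ is relatively (log-)canonical at $x$.
\item Conversely, if $\mathcal{X}$ is relatively (log-)canonical at $x$, then $\mathscr{F}$ is (log-)canonical at $\tilde{x}$. Lemma \ref{lem:descent_singularities} then gives the same property for $\mathscr{G}$ at $\tilde{y}$, hence for $\mathcal{Y}$ at $y$.
\end{itemize}

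If one prefers not to replace the algebraic space $X$ by an atlas, one can instead use the Deligne--Mumford stack framework of \S\ref{subsec:foliations_dm_stacks}. However, composing with an étale atlas $X' \rightarrow X$ keeps both maps smooth, so the scheme case above suffices.
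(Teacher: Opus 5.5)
Your proposal is correct and follows the paper's proof: base change a smooth presentation $Y \rightarrow \mathcal{Y}$ along $f$, identify the induced foliation on $X$ with the (logarithmic) pullback of $\mathscr{G}$ along the étale map $X \rightarrow Y$, and conclude by Lemma \ref{lem:ascent_singularities} and Lemma \ref{lem:descent_singularities}. You are slightly more careful than the paper in noting that $X$ is a priori only an algebraic space and passing to an étale atlas.
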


\begin{proof}
    Let $f_Y : X \rightarrow Y$ be the base change of $f$ by a smooth presentation $Y \rightarrow \mathcal{Y}$, and let $\mathscr{F}$ and $\mathscr{G}$ be the induced locally free foliations on $X$ and $Y$ respectively.
    By assumption, $\mathscr{F} = f^* \mathscr{G}$.
    Since $f_Y$ is étale, $\mathscr{F}$ is the logarithmic pullback of $\mathscr{G}$ and we conclude by Lemma \ref{lem:ascent_singularities} and Lemma \ref{lem:descent_singularities}.
\end{proof}

\begin{lemma}
    \label{lem:log_canonical_alteration_stacks}
    Suppose that $f$ is representable by an alteration of Deligne--Mumford stacks.
    Then, if $\mathcal{Y}$ is a relatively (log-)canonical at $y$, so is $\mathcal{X}$ at $x$.
\end{lemma}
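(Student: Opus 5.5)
The plan is to reduce to schemes (or Deligne--Mumford stacks) via smooth presentations and then apply the alteration lemma for foliations, Lemma \ref{lem:log_canonical_alteration}, on an étale chart.

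\textbf{Reduction to a chart.} Choose a smooth presentation $Y \rightarrow \mathcal{Y}$ and a point $\tilde{y}$ over $y$. Set $\mathbf{X} := \mathcal{X} \times_{\mathcal{Y}} Y$. By hypothesis $f_Y : \mathbf{X} \rightarrow Y$ is an alteration of Deligne--Mumford stacks, and $\mathbf{X} \rightarrow \mathcal{X}$ is smooth and representable. Let $\mathscr{G} := \mathscr{T}_{Y/\mathcal{Y}}$ and $\mathscr{F} := \mathscr{T}_{\mathbf{X}/\mathcal{X}}$. Since relative tangent sheaves of smooth morphisms commute with base change, the natural map $f_Y^* \mathscr{G} \rightarrow \mathscr{F}$ is an isomorphism. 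In particular $\mathscr{F}$ is locally free, hence Gorenstein, and the map is an isomorphism over every generic point.

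Next take an étale presentation $X \rightarrow \mathbf{X}$, which we may choose so that $X$ is normal, since $\mathcal{X}$ is normal and $X \rightarrow \mathcal{X}$ is smooth. The composite $X \rightarrow \mathcal{X}$ is then a smooth presentation whose induced foliation is the pullback of $\mathscr{F}$. So by Definition \ref{def:relative_singularities} and Definition \ref{def:singularities_dm_stacks} it suffices to show that $\mathscr{F}$ is (log-)canonical at a point of $X$ lying over $x$, given that $\mathscr{G}$ is (log-)canonical at $\tilde{y}$. The composite $X \rightarrow Y$ is generically finite on each component, so it induces finite extensions of function fields.

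\textbf{Log-canonical case.} This is exactly Lemma \ref{lem:log_canonical_alteration}, applied to $X \rightarrow Y$ with the morphism $f^*\mathscr{G} \rightarrow \mathscr{F}$, which here is even an isomorphism. Its hypotheses are a finite extension of function fields, an isomorphism at the generic point, and $\mathscr{F}$ Gorenstein, and all three have been verified above.

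\textbf{Canonical case.} I would run the same argument as in Lemma \ref{lem:log_canonical_alteration}, but with $\Delta = \emptyset$. Given a divisorial valuation $E$ over $X$, set $D = E \cap K$; this is a discrete valuation ring by Lemma \ref{lem:dvr_log_smooth}. Pulling back the inequality $\omega_{\mathscr{G}_D}^{\vee} \otimes \pi_D^*\omega_{\mathscr{G}} \rightarrow D$ along $g$ and using $\omega_{\mathscr{F}} = f^*\omega_{\mathscr{G}}$, it suffices to produce a morphism $\omega_{\mathscr{F}_E}^{\vee} \rightarrow g^*\omega_{\mathscr{G}_D}^{\vee}$, i.e. an inclusion $\mathscr{F}_E \subseteq g^*\mathscr{G}_D$ inside $\mathscr{T}_L$.

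I expect this to be the main obstacle, because ramification in $D \subseteq E$ breaks $\mathscr{T}_E = g^*\mathscr{T}_D$; only the logarithmic version holds. I would split into two cases according to whether $d$ is $\mathscr{G}_D$-invariant.

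If $d$ is $\mathscr{G}_D$-invariant, then $\mathscr{G}_D = \mathscr{G}_D^d$. Lemma \ref{lem:dvr_log_smooth} and Lemma \ref{lem:pullback_foliation_dvr} then give $g^*\mathscr{G}_D = \mathscr{F}_E^e \subseteq \mathscr{F}_E$. By Remark \ref{rem:consistency_epsilon}, the two differ by $\epsilon(E) \in \{0, 1\}$. I would then show directly, in the coordinates $s = t^n$, that $e$ is also $\mathscr{F}_E$-invariant, so that $\epsilon(E) = 0$.

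If $d$ is not invariant, then $\mathscr{G}_D$ contains a derivation with $\partial(s)$ a unit. I would check, again in these coordinates, that $D \subseteq E$ is then unramified along the relevant directions, so that $g^*\mathscr{T}_D$ and $\mathscr{T}_E$ agree on $\mathscr{F}_E$.

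If this local computation fails, the fallback is to exploit the fact that $f^*\mathscr{G} = \mathscr{F}$ holds on all of $X$, not merely generically. One would then compare $E$ with its image valuation on a model where $g$ extends, and argue that canonicity of $\mathscr{G}$ forces $n = 1$ along non-invariant divisors.
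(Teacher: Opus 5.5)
Your reduction to an étale chart of $\mathcal{X} \times_{\mathcal{Y}} Y$ and your log-canonical case coincide with the paper's proof, which consists only of this appeal to Lemma \ref{lem:log_canonical_alteration}. That lemma is stated and proved only for log-canonical singularities, so the paper does not treat the canonical half separately. Your reduction of the canonical case to an inclusion $\mathscr{F}_E \subseteq g^* \mathscr{G}_D$ is the right target, but your route to that inclusion has a genuine gap.

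In the non-invariant case you claim that $D \subseteq E$ is unramified, to be checked in the coordinates $s = t^n$. The local data ($\mathscr{G}_D = \langle \partial_s \rangle$, $s = t^n$) are consistent with every $n$, so no such computation can give $n = 1$. Ramification along non-invariant divisors does occur in the stacky setting. Let $\mathcal{X} = \mathcal{Y} = [\mathbb{A}^2/\mathbb{G}_m]$ with weights $(1,1)$, and let $f$ be induced by $(a,b) \mapsto (a^2,b^2)$ and $\lambda \mapsto \lambda^2$. Then $\mathcal{X} \times_{\mathcal{Y}} \mathbb{A}^2 = [\mathbb{A}^2/\mu_2]$. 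On the chart $\mathbb{A}^2_{a,b}$, the exceptional divisor $E$ of the blow-up of the origin is not invariant for $a\partial_a + b\partial_b$, yet $\nu_E(K^{\times}) = 2\mathbb{Z}$. This $\mathscr{G}$ is only log-canonical, but your fallback that canonicity forces $n = 1$ comes with no argument. The invariant case ($\epsilon(E) = 0$) is likewise left unproved.

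The missing observation is that you never need $\mathscr{T}_E = g^* \mathscr{T}_D$, only $\mathscr{T}_E \subseteq g^* \mathscr{T}_D$ inside $\mathscr{T}_L$, and this holds whatever the ramification. Since $L/K$ is finite separable, $g^* \Omega^1_D \to \Omega^1_E$ is a generically bijective map from a free module to a torsion-free one, hence injective; now dualise. As $\mathscr{G}_D$ is saturated in $\mathscr{T}_D$ and $g$ is flat, $g^* \mathscr{G}_D$ is saturated in $g^* \mathscr{T}_D$, i.e. $g^* \mathscr{G}_D = \mathscr{F}_L \cap g^* \mathscr{T}_D$. Therefore
\[
\mathscr{F}_E = \mathscr{F}_L \cap \mathscr{T}_E \subseteq \mathscr{F}_L \cap g^* \mathscr{T}_D = g^* \mathscr{G}_D
\]
with no case distinction. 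Hence $\omega_{\mathscr{F}_E}^{\vee} = t^k \, g^* \omega_{\mathscr{G}_D}^{\vee}$ for a uniformiser $t$ of $E$ and some $k \geq 0$. Combined with $\pi_E^* \omega_{\mathscr{F}} = g^* \pi_D^* \omega_{\mathscr{G}}$, this gives $a_E(X, \mathscr{F}) = k + r \, a_D(Y, \mathscr{G}) \geq 0$, where $r$ is the ramification index of $D \subseteq E$. In the example above, $k = 1$, $r = 2$ and $a_D = a_E = -1$. If you replace the equality $\omega_{\mathscr{F}} = f^* \omega_{\mathscr{G}}$ by the morphism $\omega_{\mathscr{F}} \to f^* \omega_{\mathscr{G}}$, the same argument yields the canonical version of Lemma \ref{lem:log_canonical_alteration} itself.
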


\begin{proof}
    This follows from Lemma \ref{lem:log_canonical_alteration} upon choosing presentations between which the induced morphism is necessarily surjective and generically finite étale.
\end{proof}
    \section{Resolution of Singularities}
\label{sec:resolution}

In this section, we want to resolve the relative singularities of the morphism $X \rightarrow \mathcal{X}$ (Remark \ref{rem:foliation_is_morphism}) in order to obtain log-regular singularities.
The main tool is functorial semistable reduction (\cite{abramovich2020}), for we have seen in Proposition \ref{prop:semistable_representation_stacks} that semistable morphisms yield log-regular singularities.
The plan of this section is outlined in the introduction (Main Ideas, Step (I)-(IV)). \par

In this section, $\mathcal{X}$ is a normal quasi-separated algebraic stack of finite type over an algebraically closed field $\mathbbm{k}$ of characteristic zero with finite generic stabilisers.
Let $X \rightarrow \mathcal{X}$ be a smooth presentation and let $\mathscr{F}$ denote the induced locally free foliation on $X$.
Let $x \in \mathcal{X}$ be a closed point.

\subsection{Construction of Rational Map}
\label{subsec:rational_map}

The aim of this subsection is to prove the following.

\begin{lemma}
    \label{lem:rational_map}
    There exists a dense open smooth Deligne--Mumford substack $\mathbf{U^{\circ}} \subseteq \mathcal{X}$ which is a gerbe over a smooth affine scheme $B^{\circ}$ of finite type over $\mathbbm{k}$.
\end{lemma}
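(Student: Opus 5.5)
The plan is to shrink $\mathcal{X}$ step by step to a dense open substack with the required properties. The key inputs are generic flatness of the inertia and the structure theory of stacks with finite flat inertia.

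First, since $\mathcal{X}$ is normal, of finite type over $\mathbbm{k}$ of characteristic zero, it is generically smooth. So there is a dense open substack on which $\mathcal{X}$ is smooth; we may replace $\mathcal{X}$ by it and assume it is irreducible. Next, consider the inertia stack $I_{\mathcal{X}} \rightarrow \mathcal{X}$, which is a group algebraic space of finite type over $\mathcal{X}$, since the diagonal is quasi-compact and quasi-separated. By generic flatness (\cite[\href{https://stacks.math.columbia.edu/tag/06QS}{Tag 06QS}]{stacks-project}), there is a dense open substack over which $I_{\mathcal{X}}$ is flat and of finite presentation. The generic fibre is finite by assumption. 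Upper semicontinuity of fibre dimension then lets us shrink further so that $I_{\mathcal{X}} \rightarrow \mathcal{X}$ is quasi-finite and flat. Because we are in characteristic zero, the fibres are étale, so the stack is Deligne--Mumford there.

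To get finiteness of the inertia (and not merely quasi-finiteness), I would use that the degree of the fibres of a quasi-finite flat separated group space is lower semicontinuous. Inertia is separated once the diagonal is affine; in general, first restrict to an open where the diagonal is separated. The locus where the degree equals its generic value is then open and dense, and over it $I_{\mathcal{X}}$ is finite étale. Call the resulting open $\mathbf{U}$. It is a smooth Deligne--Mumford stack with finite étale inertia. By Keel--Mori (\cite[Theorem 1.1]{MR1432041}), together with the standard result that a stack with flat finite inertia is a gerbe over its coarse space (rigidification, \cite[\href{https://stacks.math.columbia.edu/tag/06QJ}{Tag 06QJ}]{stacks-project} / Abramovich--Olsson--Vistoli), $\mathbf{U}$ is a gerbe over an algebraic space $B'$. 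In fact $\mathbf{U}\rightarrow [\mathbf{U}/\!\!/ I]$ is a gerbe onto a stack with trivial generic inertia, hence an algebraic space, after shrinking once more.

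Finally, $B'$ is smooth, because $\mathbf{U}\rightarrow B'$ is smooth and surjective (a gerbe with étale band in characteristic zero) and $\mathbf{U}$ is smooth. It is of finite type. Every algebraic space contains a dense open affine scheme, so we take $B^{\circ}\subseteq B'$ a smooth affine dense open and set $\mathbf{U}^{\circ}$ to be its preimage. The main obstacle is the finiteness and constancy of the inertia: one must check that the rigidification produces an algebraic space, rather than a stack with residual stabilisers. I would handle this by shrinking to the locus where the stabiliser order is constant, so that the inertia becomes a finite étale group space, and then invoking the rigidification.
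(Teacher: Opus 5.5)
There is a gap in your last step. You say that ``every algebraic space contains a dense open affine scheme''. This is false without quasi-separatedness. Take $\mathbb{A}^1_{\mathbbm{k}}/\mathbb{Z}$ with $\mathbb{Z}$ acting by translations. It is a quasi-compact algebraic space whose only non-empty open subspace is itself, and it is not a scheme. You never show that $B'$ is quasi-separated. This is exactly where the paper spends its effort, in Lemma \ref{lem:coarse_moduli_space}:
\begin{itemize}
\item $\mathbf{U}$ has quasi-compact diagonal, and $\mathbf{U} \rightarrow B'$ is a gerbe, hence flat, surjective and locally of finite presentation. So quasi-compactness of the diagonal descends to $B'$ (\cite[Tag 0DQL]{stacks-project}).
\item $B'$ is quasi-compact as the image of $\mathbf{U}$, hence of finite type.
\item Tag 06NH then gives a dense open subscheme, and inside it a dense open affine $B^{\circ}$ (the paper uses Tag 01ZX).
\item Since $\mathbf{U} \rightarrow B'$ is flat and locally of finite presentation, it is open, so the preimage of $B^{\circ}$ is dense. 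You also leave this implicit.
\end{itemize}
With these points added, your argument is complete.

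Apart from this, your route is an unpacking of what the paper cites rather than a different method. The paper first shrinks to the smooth locus and to the maximal open Deligne--Mumford substack (Tag 0DSM). It then quotes Tag 06RC, whose proof is generic flatness of the inertia followed by the rigidification result Tag 06QJ. That result only needs $I_{\mathcal{X}} \rightarrow \mathcal{X}$ to be flat and locally of finite presentation. Quotienting by the whole inertia then always yields an algebraic space.

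So several of your steps are unnecessary: making the inertia separated, making its degree constant so it becomes finite \'etale, and invoking Keel--Mori. The ``main obstacle'' you name, residual stabilisers after rigidification, does not arise. Your sentence about a ``stack with trivial generic inertia \ldots after shrinking once more'' is confused and can simply be dropped. The extra work is harmless: it identifies $B'$ with the Keel--Mori coarse space. But it does not supply the quasi-separatedness of $B'$ that the final step needs.
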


\begin{remark}
    \label{rem:gomez_mont}
    If such open substack is represented by $U^{\circ} \subseteq X$, we get a diagram
    \begin{equation}
        \label{diag:rational_map}
        \begin{tikzcd}
            U^{\circ} \arrow[r] \arrow[d] & \mathbf{U^{\circ}} \arrow[r] \arrow[d] & B^{\circ} \\
            X \arrow[r] & \mathcal{X} &
        \end{tikzcd}
    \end{equation}
    Thus, the lemma proves existence of a rational map $X \dashedrightarrow B^{\circ}$ inducing $\mathscr{F}$ on $U^{\circ}$.
    This result is analogous to the construction of an invariant rational map associated to an algebraically integrable foliation carried out in \cite[Theorem 3]{MR1017286}.
\end{remark}

\begin{lemma}
    \label{lem:smooth_gerbe}
    There exists a dense open smooth Deligne--Mumford substack $\mathbf{U} \subseteq \mathcal{X}$ which is a gerbe over an algebraic space.
\end{lemma}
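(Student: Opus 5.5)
We want a dense open smooth Deligne--Mumford substack $\mathbf{U} \subseteq \mathcal{X}$ that is a gerbe over an algebraic space. The plan is to shrink $\mathcal{X}$ three times. First we pass to the open locus with finite stabilisers, then to a locus where the inertia is flat (so the stabiliser orders are constant), and finally we rigidify.

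\emph{Step 1 (Deligne--Mumford locus).} The stabilisers are generically finite, so by Lemma \ref{lem:regular_foliation_dm_stack} the relatively regular locus is exactly the locus of points with finite stabiliser. This locus is the complement of the support of the sheaf of formal stabilisers $\mathscr{P}_{\mathscr{F}}$ on $X$.
\begin{itemize}
\item It is open, and it descends to $\mathcal{X}$ because the foliation is invariant under the groupoid.
\item It is non-empty because generic stabilisers are finite.
\end{itemize}
Since $\mathcal{X}$ is quasi-separated with finite, hence unramified (characteristic zero), stabilisers there, this open substack is Deligne--Mumford. We also intersect with the smooth locus, which is dense open because $\mathcal{X}$ is normal, hence generically smooth over a perfect field.

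\emph{Step 2 (flat inertia).} Call the resulting dense open Deligne--Mumford stack $\mathbf{V}$ and consider its inertia $I_{\mathbf{V}} \rightarrow \mathbf{V}$. This morphism is finite type and unramified, and it is quasi-finite over a dense open, since the stabiliser is finite at points of $\mathbf{V}$. We may work on each irreducible component separately, since $\mathbf{V}$ is normal. By generic flatness, applied on an étale atlas and descended, there is a dense open $\mathbf{U} \subseteq \mathbf{V}$ over which $I_{\mathbf{U}} \rightarrow \mathbf{U}$ is flat and finite étale. Shrinking further, we may take it to be of constant degree.

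Finiteness is the delicate point. If the diagonal is only quasi-finite, we use that an unramified quasi-finite group scheme is generically finite: by Zariski's main theorem, $I$ is generically a disjoint union of a finite part and a part with empty fibres, and the latter is impossible for a group containing the identity section. Concretely, we shrink to the dense open where the fibre cardinality of $I$ attains its generic value; in characteristic zero this makes $I \rightarrow \mathbf{U}$ finite étale.

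\emph{Step 3 (rigidification).} With $I_{\mathbf{U}} \rightarrow \mathbf{U}$ a flat, finitely presented, finite étale group over the whole of $\mathbf{U}$, the rigidification $\mathbf{U} \rightarrow \mathbf{U} \!\!\fatslash I_{\mathbf{U}}$ exists (Abramovich--Corti--Vistoli, or Romagnoli). This is a gerbe, and the target has trivial inertia, hence is an algebraic space. Smoothness is inherited, and $\mathbf{U}$ is dense open in $\mathcal{X}$.

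The main obstacle I expect is Step 2: making sure the inertia becomes finite and flat on a dense open, rather than merely quasi-finite. I will handle this by taking the open locus where the fibre degree is maximal and constant, together with the fact that étale-locally an unramified group scheme of constant fibre degree over a reduced base is finite étale.
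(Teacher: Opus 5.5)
Your proposal is correct and follows the same reduction as the paper: pass to the smooth locus and the Deligne--Mumford locus, then find a dense gerbe. The paper simply cites Stacks Project Lemma 0DSM and Proposition 06RC, whereas you re-derive them via the regular locus of $\mathscr{F}$ and via generic flatness of the inertia plus rigidification.

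One caution: finiteness of the inertia is not needed, because flat and locally finitely presented inertia already makes the stack a gerbe over an algebraic space. Your constant-fibre-cardinality argument for finiteness also tacitly assumes the inertia is separated, while only quasi-separatedness is assumed here. If you do want finiteness, use generic finiteness of quasi-compact, quasi-separated, quasi-finite morphisms over an integral base instead.
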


\begin{proof}
    Any algebraic stack $\mathcal{X}$ contains a maximal open smooth substack.
    Since $\mathcal{X}$ is normal, such substack is dense.
    Thus, up to replacing $\mathcal{X}$, we may assume $\mathcal{X}$ is smooth. \par

    Similarly, an algebraic stack contains a maximal open Deligne--Mumford substack (\cite[\href{https://stacks.math.columbia.edu/tag/0DSM}{Lemma 0DSM}]{stacks-project}).
    By assumption, this is dense, for the generic stabilisers of $\mathcal{X}$ are trivial.
    Thus, up to replacing $\mathcal{X}$, we may assume $\mathcal{X}$ is smooth and Deligne--Mumford. \par

    Finally, since $\mathcal{X}$ is normal and quasi-separated, there exists a dense open substack $\mathbf{U}$ which is a gerbe over an algebraic space (\cite[\href{https://stacks.math.columbia.edu/tag/06RC}{Proposition 06RC}]{stacks-project}).
    Note that $\mathbf{U}$ must be smooth and Deligne--Mumford, since so is $\mathcal{X}$.
\end{proof}

\begin{lemma}
    \label{lem:coarse_moduli_space}
    Let $\mathcal{X}$ be a smooth quasi-separated stack of finite type over $\mathbbm{k}$.
    Assume $\mathcal{X} \rightarrow B$ is a gerbe over an algebraic space $B$.
    Then $\mathcal{X} \rightarrow B$ is smooth and $B$ is smooth, quasi-separated and of finite type over $\mathbbm{k}$.
\end{lemma}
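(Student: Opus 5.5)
The plan is to use the standard fact that a gerbe is fppf-locally a classifying stack, and then to descend smoothness, quasi-separatedness and finite type from $\mathcal{X}$ to $B$.

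\emph{Step 1: the structure map is smooth.} By definition, the gerbe $\pi : \mathcal{X} \rightarrow B$ is flat, surjective and locally of finite presentation. Moreover, there is an fppf cover $B^{\prime} \rightarrow B$ by a scheme such that
\begin{align*}
    \mathcal{X} \times_B B^{\prime} \cong B\mathcal{G}
\end{align*}
for some flat group algebraic space $\mathcal{G} \rightarrow B^{\prime}$ of finite presentation (\cite[\href{https://stacks.math.columbia.edu/tag/06QH}{Tag 06QH}]{stacks-project}). The morphism $B\mathcal{G} \rightarrow B^{\prime}$ is smooth, because it is fppf-locally the quotient of $B^{\prime}$ by a flat group, and $B^{\prime} \rightarrow B\mathcal{G}$ is fppf. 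In characteristic zero one can say more directly: the relative inertia of $\pi$ is the whole inertia of $\mathcal{X}$ over $B$, and the fibres of $\pi$ over field points $b$ are gerbes over $\kappa(b)$. After a field extension, such a fibre becomes $BG_b$ for a group scheme $G_b$ of finite type, and $G_b$ is smooth by Cartier's theorem. Hence $\pi$ is flat, locally of finite presentation, and has smooth fibres, so $\pi$ is smooth. This is the cleanest route, and I would use it.

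\emph{Step 2: $B$ is smooth.} Choose a smooth presentation $X \rightarrow \mathcal{X}$ with $X$ a smooth scheme. The composition $X \rightarrow \mathcal{X} \rightarrow B$ is smooth and surjective. Smoothness over $\mathbbm{k}$ is local in the smooth topology on the source (\cite[\href{https://stacks.math.columbia.edu/tag/05B5}{Lemma 05B5}]{stacks-project}), so it descends along this map, and $B$ is smooth over $\mathbbm{k}$.

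\emph{Step 3: finite type and quasi-separated.} Since $X$ is quasi-compact and $X \rightarrow B$ is surjective, $B$ is quasi-compact. Local finite type descends along the smooth surjection $X \rightarrow B$. Quasi-separatedness should follow because $\pi$ is surjective and $\mathcal{X}$ is quasi-separated. Concretely, I would write
\begin{align*}
    \mathcal{X} \times_{\mathbbm{k}} \mathcal{X} \rightarrow B \times_{\mathbbm{k}} B.
\end{align*}
The fibre product of $\mathcal{X} \times \mathcal{X}$ with the diagonal $\Delta_B$ is $\mathcal{X} \times_B \mathcal{X}$, and the morphism $\mathcal{X} \times_B \mathcal{X} \rightarrow \mathcal{X} \times \mathcal{X}$ is quasi-compact because $\mathcal{X}$ is quasi-separated. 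The map $\mathcal{X} \times \mathcal{X} \rightarrow B \times B$ is surjective and flat (indeed smooth), so quasi-compactness of $\Delta_B$ descends by fpqc descent of quasi-compactness (\cite[\href{https://stacks.math.columbia.edu/tag/02KQ}{Lemma 02KQ}]{stacks-project}). The remaining input is that $\mathcal{X} \times_B \mathcal{X} \rightarrow \mathcal{X} \times \mathcal{X}$ is exactly this base change; this holds because the diagonal of $\mathcal{X}$ factors through it.

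\emph{Main obstacle.} I expect Step 1, namely establishing smoothness of $\pi$, to be the main obstacle. Its key input is that the automorphism group of a gerbe fibre is smooth, which is where characteristic zero enters. The descent statements in Steps 2 and 3 are formal.
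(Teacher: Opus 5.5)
Your outline matches the paper's: the gerbe $\mathcal{X}\to B$ is smooth, smoothness of $B$ descends along the smooth surjection $X\to\mathcal{X}\to B$, quasi-compactness of $B$ follows from surjectivity, and quasi-separatedness is transferred from $\mathcal{X}$ to $B$. For Step 1 the paper just cites the general fact that every gerbe is smooth, which holds in any characteristic. Your fibrewise argument via Cartier's theorem is correct in characteristic zero but unnecessary, and Step 1 is not where the real subtlety lies.

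The genuine gap is in the quasi-separatedness step. Write $j : \mathcal{X}\times_B\mathcal{X}\to\mathcal{X}\times\mathcal{X}$. You claim $j$ is quasi-compact ``because $\mathcal{X}$ is quasi-separated''. That does not follow from what you actually use, namely $\pi$ smooth surjective with quasi-separated source. For example, take $\pi : \mathbb{A}^1_{\mathbb{C}}\to B=\mathbb{A}^1_{\mathbb{C}}/\mathbb{Z}$ (translation action). All your hypotheses hold, yet $\mathbb{A}^1\times_B\mathbb{A}^1=\coprod_{n\in\mathbb{Z}}\mathbb{A}^1\to\mathbb{A}^1\times\mathbb{A}^1$ is not quasi-compact and $B$ is not quasi-separated. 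So as written, your argument proves something false.

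The missing input is the defining property of a gerbe that the relative diagonal $\Delta_{\mathcal{X}/B} : \mathcal{X}\to\mathcal{X}\times_B\mathcal{X}$ is surjective: any two objects over the same point of $B$ are fppf-locally isomorphic. Then $\Delta_{\mathcal{X}} = j\circ\Delta_{\mathcal{X}/B}$ is quasi-compact with surjective first factor, which forces $j$ to be quasi-compact. After that, your descent of quasi-compactness along the flat surjection $\mathcal{X}\times\mathcal{X}\to B\times B$ goes through (compose with a presentation $X\times X\to\mathcal{X}\times\mathcal{X}$ to descend along a scheme).

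Your final sentence misplaces the role of the factorisation. Identifying $j$ with the base change of $\Delta_B$ is purely formal. The factorisation of $\Delta_{\mathcal{X}}$ through $j$ is what gives quasi-compactness of $j$, and only together with surjectivity of $\Delta_{\mathcal{X}/B}$. This is exactly the step the paper delegates to its Stacks Project citation (Tag 0DQL). It is where the gerbe hypothesis, rather than mere flatness, is used.
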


\begin{proof}
    Recall that any gerbe is smooth (\cite[\href{https://stacks.math.columbia.edu/tag/0DN8}{Lemma 0DN8}]{stacks-project}) and being smooth descends along smooth surjective morphisms, thus $B$ is smooth over $\mathbbm{k}$.
    Furthermore, as the diagonal of $\mathcal{X}$ is quasi-compact by assumption, so is the diagonal of $B$ (\cite[\href{https://stacks.math.columbia.edu/tag/0DQL}{Lemma 0DQL}]{stacks-project}).
    As a result, $B$ is quasi-separated.
    In order to show that $B$ is of finite type, it suffices to show that $B$ is quasi-compact.
    This follows from \cite[\href{https://stacks.math.columbia.edu/tag/050X}{Lemma 050X}]{stacks-project}.
\end{proof}

\begin{proof}
    [Proof of Lemma \ref{lem:rational_map}]
    Let $\mathbf{U}$ be the dense open substack constructed in Lemma \ref{lem:smooth_gerbe}.
    $\mathbf{U}$ is a smooth Deligne--Mumford stack which is a gerbe over an algebraic space $B$.
    By Lemma \ref{lem:coarse_moduli_space}, $B$ is quasi-separated, hence it contains an open dense subset which is a scheme (\cite[\href{https://stacks.math.columbia.edu/tag/06NH}{Proposition 06NH}]{stacks-project}).
    Such subscheme must be smooth, quasi-separated and of finite type over $\mathbbm{k}$, for so is $B$.
    Thus \cite[\href{https://stacks.math.columbia.edu/tag/01ZX}{Lemma 01ZX}]{stacks-project} implies that there exists a dense open affine subscheme $B^{\circ} \subseteq B$.
    It follows that $B^{\circ}$ is a smooth affine scheme of finite type over $\mathbbm{k}$.
    Let $\mathbf{U^{\circ}}$ be its preimage of $B^{\circ}$.
    Since a gerbe is smooth, the preimage of $B^{\circ}$ is still dense in $\mathbf{U}$, which is dense in $\mathcal{X}$.
\end{proof}

\subsection{Resolution of Indeterminacy Locus}
\label{subsec:indeterminacy_locus}

Let $B^{\circ} \subseteq B$ be a smooth projective compactification and let $f : \mathcal{X} \supset \mathbf{U^{\circ}} \rightarrow B^{\circ} \subseteq B$ denote the induced rational map.
We resolve the indeterminacy locus of $f$.

\begin{lemma}
    \label{lem:indeterminacy_locus}
    The indeterminacy locus of $f : \mathcal{X} \supset \mathbf{U^{\circ}} \rightarrow B^{\circ} \subseteq B$ can be resolved by a representable logarithmic modification
    \begin{equation}
        \label{diag:indeterminacy_locus}
        \begin{tikzcd}
            & (\mathcal{Z}, \Delta_{\mathcal{Z}}) \arrow[dl, "\sigma"'] \arrow[dr, "g"] & \\
            \mathcal{X} \arrow[rr, "f", dashed] && B,
        \end{tikzcd}
    \end{equation}
    such that $\Delta_{\mathcal{Z}} = \mathcal{Z} \setminus \mathbf{U^{\circ}}$, $\sigma$ is an isomorphism over $\mathbf{U^{\circ}}$ and $g$ is smooth on $\mathbf{U^{\circ}}$.
    Furthermore $\mathcal{Z}$ is a locally Noetherian normal quasi-separated algebraic stack of finite type over $\mathbbm{k}$ with finite generic stabilisers.
\end{lemma}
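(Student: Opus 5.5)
Since the statement must be intrinsic to $\mathcal{X}$ (the modification $\sigma$ is required to be representable), I will construct the resolution on the smooth presentation $X \rightarrow \mathcal{X}$ and make the construction \emph{functorial for smooth morphisms}, so that it descends along the groupoid $R = X \times_{\mathcal{X}} X \rightrightarrows X$.

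First, I pass to the closure of the graph. Let $U^{\circ} \subseteq X$ be the preimage of $\mathbf{U^{\circ}}$ and let $f_X : X \dashrightarrow B$ be the composite rational map. This map is $R$-invariant, since it factors through $\mathcal{X}$. I take $\Gamma_X \subseteq X \times B$ to be the scheme-theoretic closure of the graph of $f_X|_{U^{\circ}}$, and $\Gamma_R$ the analogous closure over $R$. Because $s,t$ are smooth (hence flat), scheme-theoretic closure commutes with flat base change, so $s^{-1}\Gamma_X = \Gamma_R = t^{-1}\Gamma_X$. Therefore $\Gamma_X \rightarrow X$ descends to a representable proper morphism $\Gamma \rightarrow \mathcal{X}$. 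Equivalently, $\Gamma$ is the closure of the graph inside $\mathcal{X} \times B$. This morphism is an isomorphism over $\mathbf{U^{\circ}}$, since there the graph is already closed.

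Next, I normalise and make the boundary a divisor. Normalisation commutes with smooth base change, so the normalisation of $\Gamma$ is again representable over $\mathcal{X}$. The complement of $\mathbf{U^{\circ}}$, however, need not be a Cartier divisor. To fix this, I will apply functorial logarithmic resolution of singularities (for instance Temkin's, or the Abramovich--Temkin--Włodarczyk functorial principalisation), which is compatible with smooth morphisms. I apply it to the pair consisting of the normalised graph and the reduced complement of $\mathbf{U^{\circ}}$, but only away from $\mathbf{U^{\circ}}$. The output is a stack $\mathcal{Z}$ on which $\Delta_{\mathcal{Z}} := \mathcal{Z} \setminus \mathbf{U^{\circ}}$ is a reduced Cartier divisor (snc on presentations). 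Functoriality with respect to $s$ and $t$ gives descent, so $\sigma : \mathcal{Z} \rightarrow \mathcal{X}$ is representable, proper, and an isomorphism over $\mathbf{U^{\circ}}$. Since $\sigma$ is a composite of blow-ups and normalisations, it is a logarithmic modification for the log structure induced by $\Delta_{\mathcal{Z}}$. The map $g$ is the composite $\mathcal{Z} \rightarrow \Gamma \rightarrow B$. On $\mathbf{U^{\circ}}$ it is the composite of the gerbe $\mathbf{U^{\circ}} \rightarrow B^{\circ}$, which is smooth by Lemma \ref{lem:coarse_moduli_space}, with the open immersion $B^{\circ} \subseteq B$, so it is smooth there.

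The remaining properties are formal. $\mathcal{Z}$ is of finite type and quasi-separated because it is proper and representable over $\mathcal{X}$, which has these properties. It is locally Noetherian and normal because $Z \rightarrow X$ lands in normal schemes. Its generic stabilisers are finite because $\mathbf{U^{\circ}}$ is dense in $\mathcal{Z}$; here I need that the resolution introduces no new components, which holds since it only modifies proper closed subsets and I take closures.

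The main obstacle is making sure every operation is functorial for smooth, not merely étale, morphisms, since the groupoid $R$ has smooth projections. Closure of the graph and normalisation are fine by flat base change. For the resolution step I will invoke the smooth-functorial version of logarithmic resolution. If only étale functoriality turns out to be available, I will first pass to the Deligne--Mumford-type situation via a minimal presentation, but I expect the smooth-functorial theorem suffices.
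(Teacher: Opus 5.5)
Your proof is correct, but it makes the boundary Cartier by a different and heavier route than the paper.

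The paper starts by blowing up $\mathcal{X}$ along $\mathcal{X} \setminus \mathbf{U^{\circ}}$ with its reduced structure. Blow-ups commute with flat base change, and the reduced induced structure is preserved by smooth pullback. So this blow-up descends along the presentation, and afterwards the complement of $\mathbf{U^{\circ}}$ is an effective Cartier divisor $\Delta_{\mathcal{X}}$. The paper then does what you do in your first step: it takes the scheme-theoretic image of $\mathbf{U^{\circ}} \rightarrow \mathcal{X} \times B$, which is Cartesian over $X \times B$ by flat base change, and normalises. Finally it sets $\Delta_{\mathcal{Z}} := \sigma^{-1}(\Delta_{\mathcal{X}})$. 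This is Cartier with complement $\mathbf{U^{\circ}}$, because $\mathbf{U^{\circ}}$ is dense in $\mathcal{Z}$.

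What the paper's route buys: it needs only that blow-ups, scheme-theoretic images and normalisation are compatible with smooth base change. The smooth-functoriality obstacle you flag never arises. Also, $\sigma$ is visibly a projective morphism of log stacks $(\mathcal{Z}, \Delta_{\mathcal{Z}}) \rightarrow (\mathcal{X}, \Delta_{\mathcal{X}})$. In your version $\mathcal{X}$ carries no log structure, so "logarithmic modification" has to be read with the trivial log structure on the target.

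What your route buys: it invokes smooth-functorial log resolution, which is legitimately available on Artin stacks (e.g.\ via Temkin's functorial desingularisation) but is a much deeper input. In exchange you get more. $\mathcal{Z}$ becomes smooth and $\Delta_{\mathcal{Z}}$ is snc, in particular genuinely reduced. By contrast, the exceptional divisor of a blow-up along a reduced centre is a priori only an effective Cartier divisor with the right support.

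Finally, your restriction "only away from $\mathbf{U^{\circ}}$" is automatic. The normalised graph is already smooth over $\mathbf{U^{\circ}}$, and principalising the ideal of the complement only blows up centres lying in its cosupport.
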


\begin{proof}
    Up to blowing up $\mathcal{X} \setminus \mathbf{U^{\circ}}$ with its reduced structure, we may assume that the complement of $\mathbf{U^{\circ}} \subseteq \mathcal{X}$ is a reduced Cartier divisor $\Delta_{\mathcal{X}} \subseteq \mathcal{X}$ represented by $\Delta_X \subseteq X$.
    The morphisms $U^{\circ} \rightarrow B$ and $\mathbf{U^{\circ}} \rightarrow B$ induce morphisms $U^{\circ} \rightarrow X \times B$ and $\mathbf{U^{\circ}} \rightarrow \mathcal{X} \times B$ respectively.
    Let $Z$ and $\mathcal{Z}$ be the respective images.
    We obtain a commutative diagram
    \begin{equation}
        \label{diag:images_resolution}
        \begin{tikzcd}
            U^{\circ} \arrow[r] \arrow[d] & Z \arrow[r] \arrow[d] & X \times B \arrow[r] \arrow[d] & X \arrow[d] \\
            \mathbf{U^{\circ}} \arrow[r] & \mathcal{Z} \arrow[r] & \mathcal{X} \times B \arrow[r] & \mathcal{X},
        \end{tikzcd}
    \end{equation}
    where every square is Cartesian.
    This follows from the fact that scheme-theoretic image commutes with flat base change.
    Therefore we obtain a representable morphism of stacks $\sigma : \mathcal{Z} \rightarrow \mathcal{X}$, which is an isomorphism on the smooth open substack $\mathbf{U^{\circ}}$, and is projective, since so is $B$.
    Thus, by Lemma \ref{lem:rational_map}, $g$ is smooth on $\mathbf{U^{\circ}}$.
    Up to replacing $\mathcal{Z}$ with its normalisation, we may assume $\mathcal{Z}$ is normal.
    The remaining properties of $\mathcal{Z}$ are easily derived from the corresponding properties of $\mathcal{X}$.
    Let $\Delta_{\mathcal{Z}} = \sigma^{-1}(\Delta_{\mathcal{X}})$, then, by construction, $\mathbf{U^{\circ}} = \mathcal{Z} \setminus \Delta_{\mathcal{Z}}$.
\end{proof}

\begin{proposition}
    \label{prop:canonical_invariant_valuation}
    With the notation of Lemma \ref{lem:indeterminacy_locus}, suppose further that $\mathcal{X}$ is canonical at $x$.
    Then $g(\Delta_{\mathcal{Z}})$ is not dense in $B$.
    In particular, there exists an open subset $W \subseteq B$ such that $g_W : g^{-1}(W) \rightarrow W$ is a gerbe.
\end{proposition}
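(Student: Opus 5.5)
The plan is to argue by contradiction: a component of $\Delta_{\mathcal{Z}}$ dominating $B$ would be a divisorial valuation over $X$ with negative discrepancy. Throughout I work on a small neighbourhood of $x$, shrinking $\mathcal{X}$ so that every component of $\Delta_{\mathcal{Z}}$ meets $\sigma^{-1}(x)$. Components not meeting $\sigma^{-1}(x)$ can be discarded, because they do not affect the conclusion near $x$; this is also the sense in which the statement is local at $x$.

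\emph{Step 1: set-up.} Suppose $g(\Delta_{\mathcal{Z}})$ is dense in $B$. Then some irreducible component $\mathcal{E} \subseteq \Delta_{\mathcal{Z}}$ dominates $B$. Pass to the presentation $Z := \mathcal{Z} \times_{\mathcal{X}} X \rightarrow X$ from Diagram (\ref{diag:images_resolution}), and let $e \in Z$ be the generic point of a component of the preimage of $\mathcal{E}$. Since $Z$ is normal, $E := \mathscr{O}_{Z,e}$ is a divisorial valuation over $X$ whose centre contains a point representing $x$. The foliation $\mathscr{F}$ agrees generically with $\mathscr{T}_{U^{\circ}/B^{\circ}}$, because $\mathbf{U^{\circ}} \rightarrow B^{\circ}$ is a gerbe and hence its relative tangent sheaf is the one induced by the presentation. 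Consequently the regular foliation $\mathscr{F}_E$ of Construction \ref{cons:regular_foliation_dvr} is the saturation of $\mathscr{T}_{Z/B}$ at $e$.

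\emph{Step 2: $e$ is not $\mathscr{F}_E$-invariant.} Since $\mathcal{E}$ dominates $B$, the leaves of $\mathscr{F}_E$, which are the fibres of $g$, are not contained in $E$. Concretely, choose $b \in \mathscr{O}_B$ whose pullback $g^*b$ is a unit at $e$ with residue nonconstant along $\kappa(e)$ over $\mathbbm{k}$. Then the vertical derivations in $\mathscr{F}_E$ cannot all preserve $t E$, where $t$ is a uniformiser; I would check this by restricting to the generic fibre of $E \rightarrow B$. Hence $\epsilon(E) = 1$ in Remark \ref{rem:consistency_epsilon}, that is, $\mathscr{F}_E^e \subsetneq \mathscr{F}_E$ and $a_E(X,\mathscr{F}) = a_E^e(X,\mathscr{F}) - 1$.

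\emph{Step 3: $\pi^*\mathscr{F} \subseteq \mathscr{F}_E^e$.} The divisor $\Delta_X$ is the preimage of a closed substack of $\mathcal{X}$, namely the complement of $\mathbf{U^{\circ}}$ (after the blow-up in the proof of Lemma \ref{lem:indeterminacy_locus}). By the stacky characterisation of invariance after Definition \ref{def:invariant_subscheme}, $\Delta_X$ is therefore $\mathscr{F}$-invariant, so every local derivation $\partial$ of $\mathscr{F}$ satisfies $\partial(\mathscr{I}_{\Delta_X}) \subseteq \mathscr{I}_{\Delta_X}$. The centre of $E$ lies in $\Delta_X$, so $\mathscr{I}_{\Delta_X} E = (t^k)$ with $k>0$, and $\pi^*\partial$ preserves $t^kE$. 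Writing $\partial(t^k) = k t^{k-1}\partial(t)$ and using characteristic zero, this forces $\partial(t) \in tE$. Thus $\pi^*\partial$ is logarithmic along $e$, and $\pi^*\mathscr{F} \subseteq \mathscr{F}_E^e$. As in the proof of Lemma \ref{lem:canonical_implies_saturated}, this inclusion yields $\omega_{\mathscr{F}_E^e} \rightarrow \pi^*\omega_{\mathscr{F}}$, which is an isomorphism generically, so $a_E^e(X,\mathscr{F}) \leq 0$. Combined with Step 2 this gives $a_E(X,\mathscr{F}) \leq -1$, contradicting canonicity at $x$ (Definition \ref{def:singularities} together with Definition \ref{def:relative_singularities}).

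\emph{Step 4: the gerbe.} Take $W := B^{\circ} \setminus \overline{g(\Delta_{\mathcal{Z}})}$, which is a nonempty open subset. Then $g^{-1}(W) \subseteq \mathcal{Z} \setminus \Delta_{\mathcal{Z}} = \mathbf{U^{\circ}}$, and it is the preimage of $W$ under the gerbe $\mathbf{U^{\circ}} \rightarrow B^{\circ}$; hence $g_W$ is a gerbe, as a base change of one. I expect Step 2 to be the main obstacle, since it must be proved rigorously that a horizontal divisor is not invariant for the saturation of $\mathscr{T}_{Z/B}$ at a possibly non-reduced fibre structure. I would first reduce to the completion $\hat{E}$ and compute there with Cohen coordinates, as in Lemma \ref{lem:dvr_log_smooth}.
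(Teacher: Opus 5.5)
Your proof is correct and is essentially the paper's argument read contrapositively. The paper transfers canonicity to $Z$ (Lemma \ref{lem:log_canonical_birational}) and uses Lemma \ref{lem:regular_canonical_isomorphism} to get $\sigma_X^*\mathscr{F} = \mathscr{F}_E$ at $e$, so that $e$, being $\sigma_X^*\mathscr{F}$-invariant, is invariant for $\mathscr{F}_E \supseteq \mathscr{T}_{g_E}$; you instead inline that saturation step as the explicit bound $a_E(X,\mathscr{F}) \leq -1$, coming from $\pi^*\mathscr{F} \subseteq \mathscr{F}_E^e$ with $\mathscr{F}_E^e$ strictly smaller than $\mathscr{F}_E$, and your $t^k$ argument in characteristic zero supplies the invariance of $e$ that the paper only asserts.

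Your Step 2 is exactly the paper's final step and is easier than you fear. Since $g_E(e) = \eta$, one has $\kappa(\eta) \subseteq E$ and $\mathscr{F}_E = \mathscr{T}_{E/\kappa(\eta)}$. Choosing a coefficient field of $\hat{E}$ containing $\kappa(\eta)$, the derivation $\partial/\partial t$ sends $t$ to $1$, and finiteness of $\Omega^1_{E/\kappa(\eta)}$ then rules out $\partial(t) \in tE$ for all $\partial \in \mathscr{F}_E$. The function $b$ plays no role, since vertical derivations kill it, and no non-reduced fibres occur, because $E$ is a local ring of the generic fibre.
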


\begin{proof}
    We work locally around $x$ and we may get rid of prime components of $\Delta_{\mathcal{Z}}$ whose centre does not contain $x$.
    Furthermore, since $\Delta_{\mathcal{Z}}$ is a finite union of prime components, in order to prove the proposition, we may assume it is irreducible.
    Let $e$ be the codimension one point of $Z$ contained in $\Delta_Z$ and let $E$ be the localisation of $Z$ at $e$.
    Consider the composition $g_E : E \rightarrow B$ and its relative tangent sheaf $\mathscr{T}_{g_E}$. \par

    We first show that $e \in E$ is $\mathscr{T}_{g_E}$-invariant.
    By construction, $\mathscr{T}_{g_E}$ is equal to $\sigma_X^* \mathscr{F}$ over the generic point of $E$, thus $\mathscr{T}_{g_E} \subseteq \mathscr{F}_E$.
    On the other hand, since $\sigma(e) = x$, $\sigma_X^* \mathscr{F}$ is a canonical foliation on $E$ at $e$ (Lemma \ref{lem:log_canonical_birational}).
    Since $\mathscr{F}_E$ is a regular foliation on $E$, we have that $\sigma_X^* \mathscr{F} = \mathscr{F}_E$ (Lemma \ref{lem:regular_canonical_isomorphism}).
    It follows that $\mathscr{T}_{g_E} \subseteq \sigma_X^* \mathscr{F}$, thus, since $e$ is $\sigma_X^* \mathscr{F}$-invariant, it is also $\mathscr{T}_{g_E}$-invariant. \par

    Suppose for contradiction that $g_E(e) = \eta$, where $\eta$ is the generic point of $B$.
    The generic point of $E$ is also mapped to $\eta \in B$, thus, since $E$ is reduced, the scheme-theoretic image of $g_E$ must be $\spec \kappa(\eta) \in B$.
    This implies that $\mathscr{T}_{g_E} = \mathscr{T}_{E/\kappa(\eta)}$.
    But now $E$ is a discrete valuation ring over $\kappa(\eta)$ such that $e$ is $\mathscr{T}_{E/\kappa(\eta)}$-invariant.
    This cannot happen.
    Indeed, after completion, $E \cong k_E\llbracket t \rrbracket$ for a local parameter $t \in E$ and a coefficient field $k_E \subseteq E$ necessarily containing $\kappa(\eta)$.
    Thus, we may construct a derivation mapping $t$ to $1$, contradicting invariance of $e$. \par

    Now, let $W \subseteq B^{\circ}$ be an open dense subset which does not intersect the image of $\Delta_{\mathcal{Z}}$.
    By construction, $\mathbf{U^{\circ}} = \mathcal{Z} \setminus \Delta_{\mathcal{Z}}$, thus $g^{-1} (W) \subseteq \mathbf{U^{\circ}}$.
    It follows that, since $\mathbf{U^{\circ}} \rightarrow B^{\circ}$ is a gerbe (Lemma \ref{lem:smooth_gerbe}), so is its base change $g_W : g^{-1} (W) \rightarrow W$.
\end{proof}

\subsection{Semistable Reduction}
\label{subsec:semistable_reduction}

We now verify that we can apply the main results of \cite{abramovich2020} on functorial semistable reduction and carry out the procedure.

\begin{lemma}
    \label{lem:semistable_reduction}
    Let $g : (\mathcal{Z}, \Delta_{\mathcal{Z}}) \rightarrow B$ be the morphism of Lemma \ref{lem:indeterminacy_locus}.
    Then, there exists a commutative diagram
    \begin{equation}
        \label{eq:semistable_morphism}
        \begin{tikzcd}
            (\mathcal{Z}^{\mathrm{ss}}, \Delta_{\mathcal{Z}}^{\mathrm{ss}}) \arrow[r, "g^{\mathrm{ss}}"] \arrow[d] & ({\mathbf{B}}^{\mathrm{ss}}, \Delta_{\mathbf{B}}^{\mathrm{ss}}) \arrow[d] \\
            (\mathcal{Z}, \Delta_{\mathcal{Z}}) \arrow[r, "g"] & B,
        \end{tikzcd}
    \end{equation}
    where $({\mathbf{B}}^{\mathrm{ss}}, \Delta_{\mathbf{B}}^{\mathrm{ss}}) \rightarrow B$ is a logarithmic alteration, $(\mathcal{Z}^{\mathrm{ss}}, \Delta_{\mathcal{Z}}^{\mathrm{ss}}) \rightarrow (\mathcal{Z}, \Delta_{\mathcal{Z}})$ is representable by a logarithmic alteration $({\mathbf{Z}}^{\mathrm{ss}}, \Delta_{\mathbf{Z}}^{\mathrm{ss}}) \rightarrow (Z, \Delta_Z)$ of Deligne--Mumford stacks, and $g_{\mathbf{Z}}^{\mathrm{ss}} : ({\mathbf{Z}}^{\mathrm{ss}}, \Delta_{\mathbf{Z}}^{\mathrm{ss}}) \rightarrow ({\mathbf{B}}^{\mathrm{ss}}, \Delta_{\mathbf{B}}^{\mathrm{ss}})$ is semistable.
\end{lemma}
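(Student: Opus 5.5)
The plan is to reduce to the functorial semistable reduction theorem of \cite[Theorem 1.2.17]{abramovich2020}, applied to the morphism of schemes $g_Z : (Z, \Delta_Z) \rightarrow B$ obtained by pulling back $g$ along the smooth presentation $X \rightarrow \mathcal{X}$. Functoriality with respect to smooth (or at least log smooth, strict) morphisms will then descend the construction to the stack $\mathcal{Z}$.

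First, I would check the hypotheses of the theorem. By Lemma \ref{lem:indeterminacy_locus}, $Z = \mathcal{Z} \times_{\mathcal{X}} X$ is a normal scheme of finite type over $\mathbbm{k}$, and $g_Z$ is proper over... rather, $\sigma_X$ is projective, so $g_Z$ is of finite type. The target $B$ is smooth projective, hence logarithmically smooth with the trivial log structure. The source $(Z, \Delta_Z)$ is a log scheme whose log structure is trivial on $U^{\circ} = Z \setminus \Delta_Z$. On $U^{\circ}$ the morphism $g_Z$ is smooth, since it is the composite of the smooth presentation with the gerbe $\mathbf{U^{\circ}} \rightarrow B^{\circ}$ (Lemma \ref{lem:coarse_moduli_space}). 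Hence $g_Z$ is logarithmically smooth on a dense open subset, which is the toroidal/"log smooth generically" input the theorem requires. Dominance of $g_Z$ onto $B$ also holds, because $\mathbf{U^{\circ}} \rightarrow B^{\circ}$ is surjective.

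The theorem then produces a logarithmic alteration $(\mathbf{B}^{\mathrm{ss}}, \Delta_{\mathbf{B}}^{\mathrm{ss}}) \rightarrow B$, a logarithmic alteration $(\mathbf{Z}^{\mathrm{ss}}, \Delta_{\mathbf{Z}}^{\mathrm{ss}}) \rightarrow (Z, \Delta_Z)$ of Deligne--Mumford stacks, and a semistable morphism $g_{\mathbf{Z}}^{\mathrm{ss}}$ between them. In the sense of Definition \ref{def:semistable_morphism_stacks}, this gives smooth log smooth sources and targets and an integral, saturated, log smooth map. I would pass to the relatively normalised base change if needed, so that the square commutes.

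Second, I would descend the construction to $\mathcal{Z}$. Let $R = X \times_{\mathcal{X}} X$ be the smooth groupoid and $R_Z = R \times_X Z$ the induced groupoid presenting $\mathcal{Z}$. The two projections $R_Z \rightrightarrows Z$ are smooth and commute with the maps to $B$, and the log structure $\Delta_Z$ is pulled back from $\Delta_{\mathcal{Z}}$, so they are strict. Functoriality of the semistable reduction for such morphisms (with $B$ fixed) gives canonical isomorphisms $s^*\mathbf{Z}^{\mathrm{ss}} \cong (R_Z)^{\mathrm{ss}} \cong t^*\mathbf{Z}^{\mathrm{ss}}$, and these satisfy the cocycle condition because the isomorphisms are canonical. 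The same base $\mathbf{B}^{\mathrm{ss}}$ is used throughout, since functoriality is relative to a fixed target. Descent then yields $\mathcal{Z}^{\mathrm{ss}} \rightarrow \mathcal{Z}$, representable by $\mathbf{Z}^{\mathrm{ss}} \rightarrow Z$, together with $g^{\mathrm{ss}}$ and the commutative square.

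The main obstacle is the second step. One must verify that the functoriality in \cite{abramovich2020} really covers smooth strict morphisms of the source over a fixed target, that $\mathbf{B}^{\mathrm{ss}}$ is independent of the chosen chart, and that the groupoid pulled back to $R_Z$ still satisfies the theorem's hypotheses. If functoriality is only available for log smooth morphisms, I would note that strict smooth morphisms are log smooth, so that case also suffices.
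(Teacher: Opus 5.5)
Your proposal is correct and follows essentially the same route as the paper. You apply \cite[Theorem 1.2.17]{abramovich2020} to $g_Z$ together with its smooth groupoid $R = Z \times_{\mathcal{Z}} Z \rightrightarrows Z$, and use functoriality along the source and target maps (and again along inverse and composition) to obtain a smooth groupoid $\mathbf{R}^{\mathrm{ss}} \rightrightarrows \mathbf{Z}^{\mathrm{ss}}$ over a single $\mathbf{B}^{\mathrm{ss}}$, with $\mathcal{Z}^{\mathrm{ss}} = [\mathbf{Z}^{\mathrm{ss}}/\mathbf{R}^{\mathrm{ss}}]$; this is equivalent to your descent-datum formulation. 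The points you flag are settled in Lemma \ref{lem:thm_hypotheses} as you expect: the maps $R \rightrightarrows Z$ are log regular and surjective, and surjectivity is what keeps $\mathbf{B}^{\mathrm{ss}}$ independent of the chart; local embeddability into a log orbifold follows from finite type via \cite[Corollary 8.4.5]{abramovich2020}.
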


\begin{proof}
    We want to apply functorial semistable reduction to the morphism $g_Z : (Z, \Delta_Z) \rightarrow B$ and its smooth groupoid.
    Let $R := Z \times_{\mathcal{Z}} Z \rightrightarrows Z$ be the associated smooth groupoid.
    Let also $s^{-1}(\Delta_Z) =: \Delta_R := t^{-1}(\Delta_Z)$.
    By construction, there exists a morphism $\mathcal{Z} \rightarrow B$, thus the two compositions $R \rightrightarrows Z \rightarrow B$ are equal.
    Let $g_Z^{\prime} : (R, \Delta_R) \rightarrow B$ denote the composition. \par
    
    By Lemma \ref{lem:thm_hypotheses}, the hypotheses of \cite[Theorem 1.2.17]{abramovich2020} hold, thus we obtain a commutative diagram
    \begin{equation}
        \label{diag:semistable_reduction}
        \begin{tikzcd}
            ({\mathbf{R}}^{\mathrm{ss}}, \Delta_{\mathbf{R}}^{\mathrm{ss}}) \arrow[r, shift left] \arrow[r, shift right] \arrow[d] & ({\mathbf{Z}}^{\mathrm{ss}}, \Delta_{\mathbf{Z}}^{\mathrm{ss}}) \arrow[r, "g_{\mathbf{Z}}^{\mathrm{ss}}"] \arrow[d] & ({\mathbf{B}}^{\mathrm{ss}}, \Delta_{\mathbf{B}}^{\mathrm{ss}}) \arrow[d] \\
            (R, \Delta_R) \arrow[r, shift left] \arrow[r, shift right] & (Z, \Delta_Z) \arrow[r, "g_Z"] & B,
        \end{tikzcd}
    \end{equation}
    where $({\mathbf{B}}^{\mathrm{ss}}, \Delta_{\mathbf{B}}^{\mathrm{ss}}) \rightarrow B$ is a logarithmic alteration, $({\mathbf{Z}}^{\mathrm{ss}}, \Delta_{\mathbf{Z}}^{\mathrm{ss}}) \rightarrow (Z, \Delta_Z)$ is a logarithmic alteration of Deligne--Mumford stacks and $g_{\mathbf{Z}}^{\mathrm{ss}}$ is semistable.
    Furthermore, by functoriality, $({\mathbf{R}}^{\mathrm{ss}}, \Delta_{\mathbf{R}}^{\mathrm{ss}})$ is the saturated base change of $(R, \Delta_R) \times_B ({\mathbf{B}}^{\mathrm{ss}}, \Delta_{\mathbf{B}}^{\mathrm{ss}})$, hence the left-most squares of Diagram (\ref{diag:semistable_reduction}) are Cartesian with either source or target morphism.
    In fact, we do not need to saturate, as both source and target are smooth, as opposed to merely logarithmically smooth.
    It follows that $s^{-1}(\Delta_{\mathbf{Z}}^{\mathrm{ss}}) = \Delta_{\mathbf{R}}^{\mathrm{ss}} = t^{-1}(\Delta_{\mathbf{Z}}^{\mathrm{ss}})$.
    Applying functoriality again to the inverse and composition of $R$ shows that ${\mathbf{R}}^{\mathrm{ss}}$ is a smooth groupoid on ${\mathbf{Z}}^{\mathrm{ss}}$.
    We conclude by letting $\mathcal{Z}^{\mathrm{ss}}$ denote the associated stack $[{\mathbf{Z}}^{\mathrm{ss}} / {\mathbf{R}}^{\mathrm{ss}}]$, and $\Delta_{\mathcal{Z}}^{\mathrm{ss}}$ denote the induced divisor.
\end{proof}

\begin{lemma}
    \label{lem:thm_hypotheses}
    In the setting of Lemma \ref{lem:semistable_reduction}, the hypotheses of \cite[Theorem 1.2.17]{abramovich2020} hold.
\end{lemma}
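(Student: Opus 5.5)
Since I do not have the exact text of \cite[Theorem 1.2.17]{abramovich2020} in front of me, the plan is organised around the hypotheses such a functorial semistable reduction theorem requires. I would expect them to be the following. First, a morphism $g_Z : (Z, \Delta_Z) \rightarrow B$ of integral (or at least reduced, equidimensional) logarithmic schemes of finite type over a field of characteristic zero. Second, $B$ should be a smooth (projective) variety, equivalently a logarithmically smooth target with trivial log structure. Third, $g_Z$ should be dominant (surjective onto $B$ after shrinking) and generically logarithmically smooth, with the open $Z \setminus \Delta_Z$ on which $g_Z$ is smooth being dense. Fourth, functoriality holds with respect to smooth (or log-smooth, strict) morphisms $Z' \rightarrow Z$ over $B$, which is needed to apply the construction to the groupoid $R \rightrightarrows Z$. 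I would therefore verify each hypothesis in turn for $g_Z$, and separately for $g_Z' : (R, \Delta_R) \rightarrow B$ together with the source and target maps.

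For the first three hypotheses I would argue as follows, using Lemma \ref{lem:indeterminacy_locus}.
\begin{enumerate}
    \item Since $\mathcal{Z}$ is normal, of finite type and quasi-separated, any smooth presentation $Z$ is a normal scheme of finite type. Each connected component of $Z$ is integral, so we may work component by component.
    \item $\Delta_Z$ is the pullback of the reduced Cartier divisor $\Delta_{\mathcal{Z}}$, hence a reduced Cartier divisor; $Z \setminus \Delta_Z$ is the preimage of $\mathbf{U^{\circ}}$, which is dense.
    \item On $Z \setminus \Delta_Z$ the map $g_Z$ factors as $U^{\circ} \rightarrow \mathbf{U^{\circ}} \rightarrow B^{\circ}$, a smooth presentation composed with a gerbe. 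This is smooth (Lemma \ref{lem:coarse_moduli_space}) and surjective onto the dense open $B^{\circ} \subseteq B$, so $g_Z$ is dominant and smooth, in particular logarithmically smooth with trivial log structures, away from $\Delta_Z$.
    \item $B$ is smooth projective by the choice of compactification, which uses resolution of singularities in characteristic zero.
\end{enumerate}
The same arguments apply verbatim to $R$, because $s$ and $t$ are smooth. Thus $R$ is normal, $\Delta_R$ is a reduced Cartier divisor, and $R \setminus \Delta_R = s^{-1}(U^{\circ})$ is dense.

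For functoriality, I would observe that $s, t : (R, \Delta_R) \rightarrow (Z, \Delta_Z)$ are smooth and strict, since $\Delta_R = s^{-1}(\Delta_Z) = t^{-1}(\Delta_Z)$, and that they commute with the maps to $B$ because $g_Z$ factors through $\mathcal{Z}$. These are exactly the morphisms for which \cite{abramovich2020} asserts compatibility, namely base change along smooth strict morphisms over the fixed base. The same then holds for the inverse and the composition maps $R \times_Z R \rightarrow R$.

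I expect the main obstacle to be matching the precise formulation of the hypotheses in \cite{abramovich2020}. The theorem may require $Z$ to be logarithmically smooth, or the log structure to be toroidal, rather than merely normal with a Cartier boundary. If so, the first step would be to replace $(Z, \Delta_Z)$ by its functorial logarithmic resolution (principalisation of $\Delta_Z$), which is compatible with smooth morphisms and therefore descends to $\mathcal{Z}$. This replacement would be harmless for later use, since Lemma \ref{lem:log_canonical_birational} and Lemma \ref{lem:log_canonical_alteration_stacks} tolerate further modifications. A second subtlety is quasi-compactness and finite type of $R$, which follows from the quasi-separatedness of $\mathcal{Z}$.
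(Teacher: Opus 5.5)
The part of your verification that matches the hypotheses the paper checks for \cite[Theorem 1.2.17]{abramovich2020} is fine. Finite type over $\mathbbm{k}$ makes $Z$, $R$ and $B$ quasi-excellent Noetherian logarithmic stacks of characteristic zero. Smoothness of $g_Z$ on $U^{\circ} = Z \setminus \Delta_Z$ gives generic logarithmic regularity. The maps $(R, \Delta_R) \rightrightarrows (Z, \Delta_Z)$ are strict and smooth, hence logarithmically regular. You should also record that $s$ and $t$ are \emph{surjective}, since that is the form in which the paper states the functoriality condition. This is immediate, because they are base changes of the smooth surjective presentation $Z \rightarrow \mathcal{Z}$.

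The genuine gap is that your guessed list of hypotheses leaves out the one condition that does not follow directly from the setting. Both $g_Z$ and $g_Z^{\prime} : (R, \Delta_R) \rightarrow B$ must be \emph{locally embeddable into a logarithmic orbifold}. The paper handles this by noting that $g_Z$ and $g_Z^{\prime}$ are of finite type and invoking \cite[Corollary 8.4.5]{abramovich2020}. For $R$, finite type uses the quasi-separatedness of $\mathcal{Z}$, which you observed but never put to use. Without this step the theorem cannot be applied to $g_Z$ and its groupoid, so your argument does not yet prove the lemma.

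The extra conditions you verify, such as dominance and $B$ being smooth projective, are harmless but do not substitute for the missing one. Your fallback of first replacing $(Z, \Delta_Z)$ by a functorial log resolution is unnecessary: the paper applies the theorem to $(Z, \Delta_Z)$ directly, and the embeddability condition is the point to address instead.
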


\begin{proof}
    $g_Z$ is a generically logarithmically regular morphism of quasi-excellent Noetherian logarithmic Deligne--Mumford stacks of characteristic zero.
    This is true, for $R$, $Z$ and $B$ are of finite type over $\mathbbm{k}$ and the restriction of $g_Z$ to $U^{\circ} \subseteq Z$ is smooth. \par

    The morphisms $(R, \Delta_R) \rightrightarrows (Z, \Delta_Z)$ are logarithmically regular and surjective.
    By hypothesis, both source and target are smooth surjective morphisms. \par
    
    $g_Z$ and $g_Z^{\prime}$ are locally embeddable into a logarithmic orbifold.
    This follows from the fact that both $g_Z$ and $g_Z^{\prime}$ are of finite type (\cite[Corollary 8.4.5]{abramovich2020}).
\end{proof}

\subsection{Formal Representability of Resolution}
\label{subsec:formal_representability}

We show that, when $x$ is a log-canonical singularity, the resolution of singularities is formally representable.

\begin{proposition}
    \label{prop:formal_representable}
    There exists a morphism of algebraic stacks $\pi : (\mathcal{Z}^{\mathrm{ss}}, \Delta_{\mathcal{Z}}^{\mathrm{ss}}) \rightarrow \mathcal{X}$, which is representable by a logarithmic alteration $\pi_X : ({\mathbf{Z}}^{\mathrm{ss}}, \Delta_{\mathbf{Z}}^{\mathrm{ss}}) \rightarrow X$, and a semistable morphism $g^{\mathrm{ss}} := g_{\mathbf{Z}}^{\mathrm{ss}} : ({\mathbf{Z}}^{\mathrm{ss}}, \Delta_{\mathbf{Z}}^{\mathrm{ss}}) \rightarrow {\mathbf{B}}^{\mathrm{ss}}$, such that $\pi_X^* \mathscr{F}$ is a foliation on $({\mathbf{Z}}^{\mathrm{ss}}, \Delta_{\mathbf{Z}}^{\mathrm{ss}})$ contained in $\mathscr{T}_{g^{\mathrm{ss}}}$.
    If $\mathcal{X}$ is relatively log-canonical at $x$ and $z \in \mathcal{Z}^{\mathrm{ss}}$ satisfies $\pi(z) = x$, then $\mathscr{F} = \mathscr{T}_{g^{\mathrm{ss}}}$ locally around $z$.
\end{proposition}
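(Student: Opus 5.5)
Let $\pi : \mathcal{Z}^{\mathrm{ss}} \rightarrow \mathcal{Z} \rightarrow \mathcal{X}$ be the composition of the morphism of Lemma \ref{lem:semistable_reduction} with $\sigma$ of Lemma \ref{lem:indeterminacy_locus}, and let $\pi_X : \mathbf{Z}^{\mathrm{ss}} \rightarrow Z \rightarrow X$ be its representative. Since $\sigma$ is a representable modification and $\mathcal{Z}^{\mathrm{ss}} \rightarrow \mathcal{Z}$ is representable by a logarithmic alteration, $\pi_X$ is a logarithmic alteration. The foliation $\pi_X^*\mathscr{F}$ on $\mathbf{Z}^{\mathrm{ss}}$ is then, by the Cartesian squares in Diagram (\ref{diag:images_resolution}) and Diagram (\ref{diag:semistable_reduction}), the relative tangent sheaf of the smooth presentation $\mathbf{Z}^{\mathrm{ss}} \rightarrow \mathcal{Z}^{\mathrm{ss}}$, i.e. the relative tangent sheaf of the groupoid $\mathbf{R}^{\mathrm{ss}}$. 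Pullback of a smooth presentation along a representable morphism induces the sheaf-theoretic pullback of the relative tangent sheaf, so this is a locally free foliation.

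\textbf{Containment.} Since $g^{\mathrm{ss}}$ is constant along the groupoid, i.e. $s$ and $t$ composed with $g^{\mathrm{ss}}$ agree, every local section of $\pi_X^*\mathscr{F}$ is a derivation relative to $\mathbf{B}^{\mathrm{ss}}$. By Lemma \ref{lem:semistable_reduction}, $s^{-1}(\Delta_{\mathbf{Z}}^{\mathrm{ss}}) = t^{-1}(\Delta_{\mathbf{Z}}^{\mathrm{ss}})$, so $\Delta_{\mathbf{Z}}^{\mathrm{ss}}$ descends to $\mathcal{Z}^{\mathrm{ss}}$ and is therefore $\pi_X^*\mathscr{F}$-invariant (stack-theoretic characterisation of invariance). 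Remark \ref{rem:log_tangent_sheaf_stack} then gives $\pi_X^*\mathscr{F} \subseteq \mathscr{T}_{g^{\mathrm{ss}}}$. Both foliations have the same generic rank, since over $\mathbf{U^{\circ}}$ the map $g$ is a gerbe whose relative tangent sheaf is $\mathscr{F}$.

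\textbf{Equality under log-canonicity.} Assume $\mathcal{X}$ is relatively log-canonical at $x$. By Lemma \ref{lem:log_canonical_alteration_stacks}, or directly Lemma \ref{lem:log_canonical_alteration} on presentations, $\pi_X^*\mathscr{F}$ is log-canonical as a foliation at $z$; it is Gorenstein because it is locally free. The remaining issue is the boundary: $\pi_X^*\mathscr{F}$ must be viewed as a foliation on $(\mathbf{Z}^{\mathrm{ss}}, \Delta_{\mathbf{Z}}^{\mathrm{ss}})$, and the log-canonicity we inherit is for the empty boundary. This is compatible, because $\Delta$ is invariant, so for any divisorial valuation the relevant log-regular extension $\mathscr{F}_E^e$ is the same. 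In detail, log-discrepancies computed on $(\mathbf{Z}^{\mathrm{ss}},\Delta)$ use $\Delta_E=e$ exactly as in Definition \ref{def:singularities}, and these are what Lemma \ref{lem:log_canonical_alteration} controls.

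Two cases remain. If $z \in \Delta_{\mathbf{Z}}^{\mathrm{ss}}$, we apply Lemma \ref{lem:regular_canonical_isomorphism_stacks} with $\mathscr{F}' = \mathscr{T}_{g^{\mathrm{ss}}}$, which is log-regular at $z$ by Proposition \ref{prop:semistable_representation_stacks}, and conclude $\pi_X^*\mathscr{F} = \mathscr{T}_{g^{\mathrm{ss}}}$ near $z$; reflexivity holds by local freeness. If $z \notin \Delta$, then near $z$ the map $\pi$ factors through $\mathbf{U^{\circ}}$, where $g$ is a gerbe and both sheaves are regular of equal rank, so Lemma \ref{lem:regular_foliations_isomorphic_stacks} applies.

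\textbf{Main obstacle.} I expect the hardest step to be justifying that the log-canonicity obtained from Lemma \ref{lem:log_canonical_alteration} (stated without boundary) is genuinely log-canonicity on $(\mathbf{Z}^{\mathrm{ss}},\Delta)$, as required by the hypotheses ``$x\in\Delta_X$'' in Lemma \ref{lem:canonical_implies_saturated}. I would handle this by rerunning the proof of Lemma \ref{lem:canonical_implies_saturated} with the log-regular extension at codimension-one points of $\Delta$. Invariance of $\Delta$ ensures that the log-saturation of $\pi_X^*\mathscr{F}$ at such points coincides with $\mathscr{F}_E^e$, and the non-negative log-discrepancy then forces equality of determinants and hence of the sheaves.
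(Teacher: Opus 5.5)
Your proposal is correct and follows the paper's proof essentially step for step: the same composite $\pi = \sigma \circ \tau$; the same containment $\pi_X^*\mathscr{F} \subseteq \mathscr{T}_{g^{\mathrm{ss}}}$, obtained from invariance of $g^{\mathrm{ss}}$ and of $\Delta$ together with Remark \ref{rem:log_tangent_sheaf_stack}; and the same case split, using Lemma \ref{lem:regular_foliations_isomorphic_stacks} off $\Delta$ and Lemmas \ref{lem:log_canonical_alteration_stacks} and \ref{lem:regular_canonical_isomorphism_stacks} on $\Delta$. The boundary issue you flag as the main obstacle is not really one, because the log-discrepancy in Construction \ref{cons:discrepancy} depends only on $\mathscr{F}_L$, $\omega_{\mathscr{F}}$ and the valuation through $\mathscr{F}_E^e$, and never on $\Delta$. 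If you do rerun the saturation argument by hand, note that codimension-one points through $z$ lying outside $\Delta$ map into $\mathbf{U^{\circ}}$; they are handled by the regularity argument of your second case rather than by log-discrepancies.
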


\begin{proof}
    We first obtain a rational map $f : \mathcal{X} \supseteq \mathbf{U^{\circ}} \rightarrow B^{\circ} \subseteq B$ by Lemma \ref{lem:rational_map}.
    We then resolve the indeterminacy locus by Lemma \ref{lem:indeterminacy_locus}, and apply Lemma \ref{lem:semistable_reduction} to obtain $g^{\mathrm{ss}}$.
    By combining Diagram (\ref{diag:semistable_reduction}) and Diagram (\ref{diag:images_resolution}), we see that there is a sequence of alterations fitting in the following commutative diagram
    \begin{equation}
        \label{eq:total_resolution}
        \begin{tikzcd}
            {\mathbf{Z}}^{\mathrm{ss}} \arrow[r, "\tau_Z"] \arrow[d] & Z \arrow[r, "\sigma_X"] \arrow[d] & X \arrow[d] \\
            \mathcal{Z}^{\mathrm{ss}} \arrow[r, "\tau"] & \mathcal{Z} \arrow[r, "\sigma"] & \mathcal{X},
        \end{tikzcd}
    \end{equation}
    where every square is Cartesian.
    Let $\pi = \sigma \circ \tau$ and $\pi_X = \sigma_X \circ \tau_Z$, and let $\mathscr{F}^{\mathrm{ss}}$ denote the locally free foliation on ${\mathbf{Z}}^{\mathrm{ss}}$ associated to the smooth morphism ${\mathbf{Z}}^{\mathrm{ss}} \rightarrow \mathcal{Z}^{\mathrm{ss}}$.
    It follows that $\pi_X^* \mathscr{F} = \mathscr{F}^{\mathrm{ss}}$. \par

    By construction, $g^{\mathrm{ss}}$ is an $\mathscr{F}^{\mathrm{ss}}$-invariant morphism and $\Delta_Z$ is $\mathscr{F}^{\mathrm{ss}}$-invariant.
    Thus, the local sections of $\mathscr{F}^{\mathrm{ss}}$ consist of derivations on ${\mathbf{Z}}^{\mathrm{ss}}$ relative to ${\mathbf{B}}^{\mathrm{ss}}$ under which $\Delta_{\mathbf{Z}}^{\mathrm{ss}}$ is invariant.
    It follows from Remark \ref{rem:log_tangent_sheaf_stack} that
    \begin{align}
        \label{eq:inclusion_foliations}
        \pi_X^*\mathscr{F} = \mathscr{F}^{\mathrm{ss}} \subseteq \mathscr{T}_{g^{\mathrm{ss}}}
    \end{align}
    as subsheaves of $\mathscr{T}_{{\mathbf{Z}}^{\mathrm{ss}}}$ of the same generic rank, for $\pi_X$ is an alteration. \par

    We now show that, if $\mathcal{X}$ is relatively log-canonical at $x$, then (\ref{eq:inclusion_foliations}) is an equality in a neighbourhood of $z$.
    We work locally around $z$.
    Recall that, by Proposition \ref{prop:semistable_representation_stacks}, $\mathscr{T}_{g^{\mathrm{ss}}}$ is log-regular on $({\mathbf{Z}}^{\mathrm{ss}}, \Delta_{\mathbf{Z}}^{\mathrm{ss}})$. \par

    Suppose first that $z \notin \Delta_{\mathcal{Z}}^{\mathrm{ss}}$.
    Observe that $x = \pi(z) \in \mathbf{U^{\circ}}$.
    Indeed, since $\sigma^{-1}(\mathcal{X} \setminus \mathbf{U^{\circ}}) = \Delta_{\mathcal{Z}}$ and $\tau$ is a logarithmic morphism, $\mathcal{Z}^{\mathrm{ss}} \setminus \pi^{-1}(\mathbf{U^{\circ}}) = \pi^{-1}(\mathcal{X} \setminus \mathbf{U^{\circ}}) = \tau^{-1}(\Delta_{\mathcal{Z}}) \subseteq \Delta_{\mathcal{Z}}^{\mathrm{ss}}$.
    But now $\mathcal{X}$ is relatively regular at $x$ (Lemma \ref{lem:regular_foliation_dm_stack}), hence $\mathcal{Z}^{\mathrm{ss}}$ is relatively regular at $z$ (Lemma \ref{lem:regular_locus_pullback}), so that $\mathscr{F}^{\mathrm{ss}}$ is regular on ${\mathbf{Z}}^{\mathrm{ss}}$ at a representative of $z$.
    We conclude by Lemma \ref{lem:regular_foliations_isomorphic_stacks}. \par
    
    Now suppose $z \in \Delta_{\mathcal{Z}}^{\mathrm{ss}}$.
    By Lemma \ref{lem:log_canonical_alteration_stacks}, $\mathcal{Z}^{\mathrm{ss}}$ is relatively log-canonical at $z$.
    It follows that $\mathscr{F}^{\mathrm{ss}}$ is log-canonical at a representative $\tilde{z} \in \mathbf{Z}^{\mathrm{ss}}$ of $z$.
    But now, (\ref{eq:inclusion_foliations}) is an inclusion of a log-canonical foliation into a log-regular foliation of the same generic rank, and $\tilde{z} \in \Delta_{\mathbf{Z}}^{\mathrm{ss}}$.
    Equality follows from Lemma \ref{lem:regular_canonical_isomorphism_stacks}.
\end{proof}
    \section{Proof of Main Theorem}
\label{sec:main_theorem}

We are ready to prove the Main Theorem and its corollaries.
We do so in \S \ref{subsec:main_log_canonical} and \S \ref{subsec:main_canonical}.
In \S \ref{subsec:auxiliary}, we show some auxiliary results concerning algebraic groups, which are needed for the proof.
In particular, we show a simple version of Borel's fixed point theorem for Deligne--Mumford stacks. \par

In this section, $\mathcal{X}$ is a normal algebraic stack of finite type over an algebraically closed field $\mathbbm{k}$ of characteristic zero with affine diagonal and finite generic stabilisers.
Let $x \in \mathcal{X}(\mathbbm{k})$ be a point and let $G$ be its stabiliser.
Let $X \rightarrow \mathcal{X}$ be a minimal presentation such that the representative $\tilde{x} \in X$ is an invariant closed point, and let $\mathscr{F}$ denote the induced locally free foliation on $X$.

\subsection{Log-canonical Singularities}
\label{subsec:main_log_canonical}

We assume $\mathcal{X}$ is relatively log-canonical at $x$.

\begin{proof}
    [Proof of part (1) of Main Theorem]
    We work locally around $x \in \mathcal{X}$.
    By assumption, $G$ is an affine algebraic group over $\mathbbm{k}$.
    Let $G^{\circ} \subseteq G$ be the connected component of the identity of $G$.
    By Lemma \ref{lem:additive_subgroups_torus}, it suffices to show that there exists no additive subgroup $\mathbb{G}_a \subseteq G^{\circ}$.
    Assume for contradiction that an additive subgroup exists. \par
    
    Using the fact that $\mathcal{X}$ is log-canonical at $x$, we apply Proposition \ref{prop:formal_representable} to find a morphism of algebraic stacks $\pi : (\mathcal{Z}^{\mathrm{ss}}, \Delta_{\mathcal{Z}}^{\mathrm{ss}}) \rightarrow \mathcal{X}$ representable by a logarithmic alteration $\pi_X : ({\mathbf{Z}}^{\mathrm{ss}}, \Delta_{\mathbf{Z}}^{\mathrm{ss}}) \rightarrow X$, and a semistable morphism $g^{\mathrm{ss}} : ({\mathbf{Z}}^{\mathrm{ss}}, \Delta_{\mathbf{Z}}^{\mathrm{ss}}) \rightarrow {\mathbf{B}}^{\mathrm{ss}}$ such that $\pi_X^* \mathscr{F} = \mathscr{T}_{g^{\mathrm{ss}}}$. \par
    
    Define the algebraic substack $\mathcal{V} = \pi^{-1}(x) \subseteq \mathcal{Z}^{\mathrm{ss}}$.
    By construction, it is represented by a closed immersion $\mathbf{V} \subseteq {\mathbf{Z}}^{\mathrm{ss}}$.
    Since $G$ fixes $x$, $\mathcal{V} = [\mathbf{V}/G]$.
    Since $\mathbb{G}_a \subseteq G$ also acts on $\mathbf{V}$, there exists a point $v \in \mathbf{V}(\mathbbm{k})$ whose stabiliser $H_v$ contains a copy of the additive subgroup (Lemma \ref{lem:additive_group_stack}).
    Using the normal representation of the algebraic stack $\mathcal{Z}^{\mathrm{ss}}$ at the image of $v$, we see that $\mathbb{G}_a \subseteq H_v$ acts on the tangent space $T_{\mathcal{Z}^{\mathrm{ss}}, v}$. \par

    Consider the foliation $\pi_X^* \mathscr{F}$ and let $\mathfrak{h}_{v}$ be the stabiliser Lie algebra of $v$.
    $\mathfrak{h}_{v}$ acts on the tangent space $T_{\mathcal{Z}^{\mathrm{ss}}, v}$ via the normal representation.
    We know that $\mathfrak{h}_v$ is the Lie algebra of $H_v$, and the normal representation of $\mathfrak{h}_v$ is the differential of the normal representation of $H_v$ (Remark \ref{rem:differential_normal_representation}).
    Differentiating the action of the additive subgroup $\mathbb{G}_a \subseteq H_v$ on $T_{\mathcal{Z}^{\mathrm{ss}}, v}$ yields an endomorphism $\partial$ of $T_{\mathcal{Z}^{\mathrm{ss}}, v}$.
    Crucially, $\partial$ is nilpotent, for it is induced by an action of the additive group, and it follows that the normal representation of $\pi_X^* \mathscr{F}$ at $v$ contains a nilpotent endomorphism. \par
    
    On the other hand, the normal representation of $\mathscr{T}_{g^{\mathrm{ss}}}$ at $v$ is faithful and semisimple (Proposition \ref{prop:semistable_representation_stacks}).
    Since $\pi_X^* \mathscr{F} = \mathscr{T}_{g^{\mathrm{ss}}}$, being semisimple implies that $\partial$ acts trivially, and being faithful gives the desired contradiction. \par

    But now $G$ is reductive, and, by \cite[Theorem 1.1]{MR4088350}, there exists an étale affine neighbourhood $\mathcal{U} \rightarrow \mathcal{X}$ of $x$ such that $\mathcal{U}$ is a $G$-quotient stack.
    In particular, $\mathcal{U}$ admits a good moduli space.
\end{proof}

\begin{proof}
    [Proof of Corollary A]
    We can immediately apply part (1) of the Main Theorem to the quotient stack $[X/G]$ around the image of $x$.
\end{proof}

\subsection{Canonical Singularities}
\label{subsec:main_canonical}

We assume $\mathcal{X}$ is relatively canonical at $x$.

\begin{proof}
    [Proof of part (2) of Main Theorem]
    Since $\mathcal{X}$ is relatively canonical at $x$, $\mathcal{X}$ is also relatively log-canonical at $x$ (Lemma \ref{lem:canonical_implies_log_canonical}), and we apply part (1) of the Main Theorem to find an étale affine neighbourhood $\mathcal{U} \rightarrow \mathcal{X}$ of $x$ with good moduli space $q : \mathcal{U} \rightarrow Q$.
    Since canonical singularities ascend along representable étale morphisms (Lemma \ref{lem:singularities_representable_morphisms}), we may replace $\mathcal{U}$ with $\mathcal{X}$ and assume $q : \mathcal{X} \rightarrow Q$ is a good moduli space.
    We want to show there exists a point $x^{\prime} \in \mathcal{X}$ such that $q^{-1}(q(x^{\prime})) = \{x^{\prime}\}$. \par

    By Lemma \ref{lem:indeterminacy_locus}, we get a proper morphism $\sigma : (\mathcal{Z}, \Delta_{\mathcal{Z}}) \rightarrow \mathcal{X}$ representable by a modification, and a morphism $g : (\mathcal{Z}, \Delta_{\mathcal{Z}}) \rightarrow B$ to a smooth projective scheme $B$.
    Since $\mathcal{X}$ is canonical at $x$, by Proposition \ref{prop:canonical_invariant_valuation}, there exists an open dense subset $W \subseteq B$ such that $g_W : g^{-1}(W) \rightarrow B$ is a gerbe.
    In particular, $g_W$ is a homeomorphism (\cite[\href{https://stacks.math.columbia.edu/tag/06R9}{Lemma 06R9}]{stacks-project}).
    As a result, if $w \in W$ is a closed point, we see that there exists a unique closed point $z^{\prime} \in g^{-1} (w) \subseteq \mathcal{Z}^{\prime}$.
    But $\sigma$ is a proper morphism, hence $x^{\prime} := \sigma(z^{\prime})$ is a closed point of $\mathcal{X}$.
    Since $q$ is a good moduli space, it induces a bijection between closed points of $\mathcal{X}$ and closed point of $Q$.
\end{proof}

\begin{proof}
    [Proof of Corollary B]
    Let $\mathcal{U} \rightarrow \mathcal{X}$ be an affine étale neighbourhood represented by $U \rightarrow X$ with good moduli space $q : \mathcal{U} \rightarrow Q$.
    The universal property of good moduli spaces induces a morphism of coherent sheaves $\mathscr{F}|_U \rightarrow \mathscr{T}_{U/Q}$.
    It suffices to show that they have the same generic rank.
    Let $\mathcal{U}^{\mathrm{s}}$ be the non-empty (part (2) of Main Theorem) locus of stable points of $\mathcal{U}$.
    By \cite[Proposition 2.6]{MR4255045}, this is an open substack.
    Since $q$ induces a homeomorphism between $\mathcal{U}^{\mathrm{s}}$ and a subset of $Q$, it follows that $\mathrm{dim} \, \mathcal{U}^{\mathrm{s}} = \mathrm{dim} \, Q$.
    Now, $\mathrm{dim} \, \mathcal{U}^{\mathrm{s}} = \mathrm{dim} \, \mathcal{U} = \mathrm{dim} \, U - \mathrm{rk} \, \mathscr{F}$, and thus
    \begin{align}
        \label{eq:dimension_formula}
        \mathrm{dim} \, Q = \mathrm{dim} \, U - \mathrm{rk} \, \mathscr{F}.
    \end{align}
    But the generic rank of $\mathscr{T}_{U/Q}$ is $\mathrm{dim} \, U - \mathrm{dim} \, Q$, which is equal to $\mathrm{rk} \, \mathscr{F}$.
\end{proof}

\subsection{Auxiliary Results}
\label{subsec:auxiliary}

In this subsection, we work over an algebraically closed field $\mathbbm{k}$ of characteristic zero.

\begin{lemma}
    \label{lem:additive_group_stack}
    Let $\mathbf{V}$ be a proper Deligne--Mumford stack over $\mathbbm{k}$ and let the additive group $\mathbb{G}_a$ act on $\mathbf{V}$.
    Then there exists a point $v \in \mathbf{V}(\mathbbm{k})$ fixed by $\mathbb{G}_a$.
\end{lemma}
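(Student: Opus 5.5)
The plan is to reduce to the classical Borel fixed point theorem (or more elementarily, to the fact that $\mathbb{G}_a$ acting on a proper scheme has a fixed point) by passing to the coarse moduli space, and then lift the fixed point back to $\mathbf{V}$ using the structure of the stack over its coarse space. First, by Keel--Mori, the proper Deligne--Mumford stack $\mathbf{V}$ admits a coarse moduli space $p : \mathbf{V} \rightarrow V$, with $V$ a proper algebraic space. By the universal property of coarse moduli spaces (applied to $\mathbb{G}_a \times \mathbf{V} \rightarrow \mathbf{V} \rightarrow V$, noting that $\mathbb{G}_a \times V$ is the coarse space of $\mathbb{G}_a \times \mathbf{V}$ since coarse spaces commute with flat base change), the action descends to an action of $\mathbb{G}_a$ on $V$, making $p$ equivariant.

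Next, I find a fixed point on $V$. Pick any closed point $w \in V(\mathbbm{k})$ and consider the orbit map $\mathbb{G}_a \rightarrow V$, $t \mapsto t \cdot w$. Since $V$ is proper and $\mathbb{G}_a = \mathbb{A}^1$, this extends to $\mathbb{P}^1 \rightarrow V$, and the image $w_\infty$ of $\infty$ lies in the closure of the orbit. If the orbit is not a point, its closure minus the orbit is a closed invariant subset of strictly smaller dimension, so induct on dimension (equivalently, take an orbit of minimal dimension, which is closed, hence proper, hence a point since $\mathbb{G}_a$ has no nontrivial proper homogeneous spaces: any quotient $\mathbb{G}_a/H$ with $H$ a proper subgroup is $\mathbb{A}^1$, which is not proper). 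This gives $w \in V(\mathbbm{k})$ fixed by $\mathbb{G}_a$.

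Finally, the fibre $p^{-1}(w)$ has a unique closed point $v$ (set-theoretically $p$ is a bijection on $\mathbbm{k}$-points), and $\mathbb{G}_a$-equivariance of $p$ shows $p^{-1}(w)$ is $\mathbb{G}_a$-invariant, so $\mathbb{G}_a$ fixes $v$ as a point of $|\mathbf{V}|$. The main obstacle is upgrading this to the statement needed later, namely that the stabiliser $H_v$ of $v$ in the quotient $[\mathbf{V}/\mathbb{G}_a]$ contains $\mathbb{G}_a$: a point being set-theoretically fixed gives, in the stack $[\mathbf{V}/\mathbb{G}_a]$, a stabiliser $H_v$ that is an extension of $\mathbb{G}_a$ by the finite group $\mathrm{Aut}(v)$. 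I would handle this by noting $H_v^{\circ} \rightarrow \mathbb{G}_a$ is surjective with finite kernel, so $H_v^{\circ}$ is a connected one-dimensional unipotent-by-finite group; in characteristic zero a connected one-dimensional affine group surjecting onto $\mathbb{G}_a$ is itself isomorphic to $\mathbb{G}_a$ (it cannot be a torus, which has no nontrivial maps to $\mathbb{G}_a$), giving the required copy of $\mathbb{G}_a \subseteq H_v$.
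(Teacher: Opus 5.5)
Your argument is correct and follows the paper's proof essentially step for step. Both proofs take the Keel--Mori coarse space $V$, descend the $\mathbb{G}_a$-action to it, and find a closed point of $V$ fixed by $\mathbb{G}_a$ via an orbit of minimal dimension (closed, hence proper, hence a point, since a non-trivial orbit is a copy of $\mathbb{A}^1$), then lift it through the bijection between closed points of $\mathbf{V}$ and $V$.

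Your final paragraph goes slightly beyond the paper: you show $H_v^{\circ} \cong \mathbb{G}_a$ because $H_v^{\circ} \rightarrow \mathbb{G}_a$ is surjective with finite kernel. The paper uses this passage from \emph{fixed point} to \emph{stabiliser containing} $\mathbb{G}_a$ in the proof of the Main Theorem without spelling it out.
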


\begin{proof}
    By the Keel--Mori theorem, $\mathbf{V}$ admits a coarse moduli space $q : \mathbf{V} \rightarrow V$ as a proper algebraic space (\cite[Theorem 1.1]{MR1432041}).
    Since $V$ is a uniform categorical quotient, the $\mathbb{G}_a$ action on $\mathbf{V}$ descends to a $\mathbb{G}_a$ action on $V$.
    By Lemma \ref{lem:additive_group_space}, there exists a closed point $v \in V$ fixed by $\mathbb{G}_a$.
    But $q$ induces a bijection between closed points of $\mathbf{V}$ and $V$, thus $q^{-1}(v) \subseteq \mathbf{V}(\mathbbm{k})$ is a singleton fixed by $\mathbb{G}_a$.
\end{proof}

\begin{lemma}
    \label{lem:additive_group_space}
    Let $V$ be a proper algebraic space over $\mathbbm{k}$ and let the additive group $\mathbb{G}_a$ act on $V$.
    Then there exists a closed point $v \in V$ fixed by $\mathbb{G}_a$.
\end{lemma}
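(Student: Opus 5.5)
We argue by induction on $\dim V$, using the Borel fixed point theorem for schemes together with a reduction from algebraic spaces to schemes. The first reduction is to the case where $V$ is irreducible and reduced. The irreducible components of $V$ are permuted by the connected group $\mathbb{G}_a$, so each component is preserved. We may therefore replace $V$ by one of its irreducible components with the reduced structure; this is still a proper algebraic space with a $\mathbb{G}_a$-action.

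Next, we find a dense open $\mathbb{G}_a$-stable subscheme. A quasi-separated algebraic space contains a dense open subspace that is a scheme (\cite[\href{https://stacks.math.columbia.edu/tag/06NH}{Proposition 06NH}]{stacks-project}); let $V^{\mathrm{sch}} \subseteq V$ be the largest such open, the schematic locus. The action map $a : \mathbb{G}_a \times V \rightarrow V$ is an isomorphism on each slice $\{t\} \times V$. Hence the schematic locus is carried to itself by every $t \in \mathbb{G}_a(\mathbbm{k})$, and so $V^{\mathrm{sch}}$ is $\mathbb{G}_a$-stable. Its complement $V^{\prime} := V \setminus V^{\mathrm{sch}}$, with reduced structure, is a closed $\mathbb{G}_a$-stable subspace of strictly smaller dimension, and it is again a proper algebraic space.

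Now we look at orbits. Choose a closed point $p \in V^{\mathrm{sch}}$, or any closed point of $V$ if $V^{\mathrm{sch}}$ is empty, and consider the orbit map $\mathbb{G}_a \rightarrow V$, $t \mapsto t \cdot p$. Either the orbit is a point, in which case $p$ is fixed and we are done, or the stabiliser is a proper closed subgroup of $\mathbb{G}_a$. In characteristic zero such a subgroup is trivial, so the orbit map is a locally closed immersion $\mathbb{A}^1 \hookrightarrow V$. Since $V$ is proper, this immersion extends to $\bar{o} : \mathbb{P}^1 \rightarrow V$ by the valuative criterion applied at $\infty$; the extension exists because $\mathbb{P}^1$ is a smooth curve. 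The orbit closure is then $O \cup \{q\}$, where $q = \bar{o}(\infty)$. The set $\overline{O} \setminus O$ is $\mathbb{G}_a$-stable and equal to the single point $\{q\}$, so $q$ is a closed point fixed by $\mathbb{G}_a$. This settles the claim without even using the induction.

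The main obstacle is making the orbit argument rigorous for algebraic spaces. Two points need care: that the set-theoretic boundary $\overline{O} \setminus O$ is $\mathbb{G}_a$-stable, and that the orbit is locally closed. I would handle both by working with the image of the proper morphism $\mathbb{G}_a \times \mathbb{P}^1 \rightarrow V$, or more simply by noting that $\bar{o}$ is $\mathbb{G}_a$-equivariant for the translation action on $\mathbb{P}^1$ fixing $\infty$. Equivariance follows because the two maps $\mathbb{G}_a \times \mathbb{P}^1 \rightarrow V$ in question agree on the dense open $\mathbb{G}_a \times \mathbb{A}^1$, and $V$ is separated. Then $\bar{o}(\infty)$ is fixed, since $\infty$ is fixed in $\mathbb{P}^1$. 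This equivariance argument avoids the schematic-locus reduction altogether, and it is the route I would actually write down.
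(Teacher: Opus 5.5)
Your final argument is correct, but it is not the paper's proof. You extend the orbit map $\sigma_p\colon \mathbb{A}^1 \rightarrow V$ of an arbitrary closed point $p$ to $\bar{o}\colon \mathbb{P}^1 \rightarrow V$. You show $\bar{o}$ is equivariant for translation on $\mathbb{P}^1$: the two maps $\mathbb{G}_a \times \mathbb{P}^1 \rightarrow V$ agree on the dense open $\mathbb{G}_a \times \mathbb{A}^1$, the source is reduced, and $V$ is separated. It follows that the $\mathbbm{k}$-point $\bar{o}(\infty)$ is fixed.

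The paper instead uses a closed-orbit argument:
\begin{itemize}
\item it takes a closed point whose orbit has minimal dimension;
\item it shows this orbit is closed, since otherwise its invariant boundary would contain a strictly smaller orbit;
\item it notes that a non-fixed orbit would have trivial stabiliser, hence be a $\mathbb{G}_a$-torsor and so affine;
\item such an orbit is also proper, being closed in $V$, so it must be a point.
\end{itemize}
That route needs local closedness of orbits on an algebraic space, the dimension drop on the boundary, and characteristic zero to identify a non-trivial orbit with $\mathbb{G}_a$. In exchange, it adapts to any connected unipotent group, whose homogeneous spaces are affine.

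Your limit-point argument uses only properness and separatedness. It needs no minimal orbit and no classification of subgroups of $\mathbb{G}_a$, so it works in any characteristic. It even shows that every orbit closure contains a fixed point. Its limitation is that it relies on the equivariant compactification $\mathbb{G}_a \subseteq \mathbb{P}^1$ with a single fixed boundary point.

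Several parts of your proposal can therefore be discarded:
\begin{itemize}
\item the induction on dimension;
\item the reduction to an irreducible component;
\item the schematic locus;
\item the claim that $\sigma_p$ is an immersion;
\item the claim $\overline{O} \setminus O = \{q\}$, which tacitly assumes $q \notin O$.
\end{itemize}

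The one step that deserves a line of justification for algebraic spaces is the extension over $\infty$. For instance, take the closure of the graph of $\sigma_p$ in $\mathbb{P}^1 \times V$. It is proper over $\mathbb{P}^1$, an isomorphism over $\mathbb{A}^1$, and has finite fibre over $\infty$. Hence it is finite and birational over the normal curve $\mathbb{P}^1$, and therefore isomorphic to it.
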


\begin{proof}
    Pick a closed point $v \in V$ whose orbit has minimal dimension.
    We first show that its orbit $O$ is closed.
    Let $\bar{O} \subseteq V$ be the set-theoretic closure of the orbit morphism $\sigma_v : \mathbb{G}_a \rightarrow V$ of $v$.
    Note that $\bar{O}$ is $\mathbb{G}_a$-invariant.
    Indeed, $\mathbb{G}_a$ is dense in $\bar{O}$, thus $\mathbb{G}_a \times \mathbb{G}_a$ is dense in $\mathbb{G}_a \times \bar{O}$, so that the image of $\mathbb{G}_a \times \bar{O} \rightarrow V$ is $\bar{O}$.
    If the orbit were not closed, there would exist a closed point $v^{\prime} \in V$ not contained in the image of $\sigma_v$ whose orbit would necessarily have strictly lower dimension than the orbit of $v$.
    Now, suppose $v$ is not fixed, then its stabiliser is trivial, for $\mathbb{G}_a$ does not have non-trivial algebraic subgroups.
    It follows that $O$ is a $\mathbb{G}_a$-torsor, hence $O$ is affine.
    On the other hand, $O$ is a closed subspace of a proper algebraic space, hence is proper.
    We deduce that $O$ is finite, thus $O = \{v\}$, for $\mathbb{G}_a$ is connected.
\end{proof}
    
\begin{lemma}
    \label{lem:additive_subgroups_torus}
    Let $G$ be an affine connected algebraic group over $\mathbbm{k}$.
    Suppose there exists no additive subgroup $\mathbb{G}_a \subseteq G$, then $G$ is an algebraic torus.
\end{lemma}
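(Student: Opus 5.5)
We would use the structure theory of connected affine algebraic groups in characteristic zero: the Levi decomposition $G = R_u(G) \rtimes L$, where $R_u(G)$ is the unipotent radical and $L$ is a connected reductive group. The argument runs in three steps, each exploiting the absence of additive subgroups.

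First, we show that $R_u(G)$ is trivial. In characteristic zero, a nontrivial connected unipotent group $U$ contains a copy of $\mathbb{G}_a$. Indeed, take a nonzero nilpotent element $N$ of its Lie algebra; the exponential map $t \mapsto \exp(tN)$ is then an injective homomorphism $\mathbb{G}_a \rightarrow U$. Since $G$ contains no additive subgroup, $R_u(G) = 1$, and so $G = L$ is reductive.

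Second, we show that $L$ is a torus. Write $L = Z(L)^{\circ} \cdot [L, L]$, where $[L,L]$ is semisimple. If $[L,L]$ were nontrivial, it would contain a root subgroup $U_\alpha \cong \mathbb{G}_a$ for some root $\alpha$ with respect to a maximal torus, contradicting the hypothesis. Hence $[L,L]$ is trivial, $L$ is commutative, and $L = Z(L)^{\circ}$.

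Third, we conclude using the structure of commutative groups. A connected commutative affine group in characteristic zero decomposes as $G_s \times G_u$, with $G_s$ a torus and $G_u$ a vector group. The factor $G_u$ must vanish, again by the hypothesis, so $G$ is an algebraic torus.

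The only step requiring real input is the existence of root subgroups, equivalently of a $\mathbb{G}_a$ inside any non-solvable or non-toral reductive group. A more elementary route is also available. If a connected reductive group has no unipotent elements other than $1$, then every element is semisimple. By the Jordan decomposition, a Borel subgroup $B = T \ltimes B_u$ then has trivial $B_u$, so $B = T$ is a torus. Since $G/B$ is projective and a torus is nilpotent, this forces $B = G$: a connected group with a nilpotent Borel subgroup is itself nilpotent and equal to that Borel. Either way, we cite standard results on reductive groups, for instance Borel's \emph{Linear Algebraic Groups}, rather than reprove them.
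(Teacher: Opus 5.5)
Your proposal is correct and follows the same route as the paper. You kill the unipotent radical using a $\mathbb{G}_a$ coming from a nilpotent element, then kill the semisimple derived subgroup via a root subgroup (the paper phrases this as a $\mathbb{G}_a$ inside a Borel subgroup), and conclude that $G$ is commutative, connected and reductive, hence a torus. That last observation already finishes the proof, so your third step is superfluous; your alternative argument ($B = T$ nilpotent forces $G = B$) is also valid, and in fact works for any connected affine $G$ without first reducing to the reductive case.
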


\begin{proof}
    If the unipotent radical of $G$ were not trivial, it would necessarily contain a copy of $\mathbb{G}_a$.
    It follows that $G$ is reductive, thus its derived subgroup $G^{\prime}$ is semisimple.
    If $G^{\prime}$ were not trivial, a Borel subgroup would necessarily contain a copy of $\mathbb{G}_a$.
    It follows that $G$ is Abelian, reductive and connected, i.e. an algebraic torus.
\end{proof}

    \bibliographystyle{alpha}
    \bibliography{extra/bibliography}

\end{document}